\numberwithin{equation}{section}
\newtheorem{Theorem}{Theorem}[section]
\newtheorem{Lemma}{Lemma}[section]
\theoremstyle{definition}
\newtheorem{Definition}{Definition}[section]
\theoremstyle{remark}
\newtheorem{Remark}{Remark}[section]
\newtheorem{Proposition}{Proposition}[section]
\providecommand{\norm}[1]{\left\Vert#1\right\Vert}
\renewcommand{\r}{\rho}
\renewcommand{\t}{\theta}
\newcommand{\R}{{\mathbb R}}
\newcommand{\Dv}{{\rm div}}
\newcommand{\na}{\nabla}
\newcommand{\Y}{{\mathcal Y}}
\newcounter{RomanNumber}
\newcommand{\MyRoman}[1]{\setcounter{RomanNumber}{#1}\Roman{RomanNumber}}
\def\be{\begin{equation}}
\def\en{\end{equation}}
\def\bs{\begin{split}}
\def\es{\end{split}}
\newcommand{\p}{\partial}
\newcommand{\ess}{{\rm ess\sup}}
\newcommand{\LB}{\left[}
\newcommand{\RB}{\right]}
\title[Martingale solutions for the stochastic Navier-Stokes Equations]
{Martingale solutions for the three-dimensional stochastic
nonhomogeneous incompressible Navier-Stokes equations driven by
L\'{e}vy processes}
\author{Robin Ming Chen, Dehua Wang, Huaqiao Wang}
\address{Department of Mathematics
University of Pittsburgh, Pittsburgh, PA 15260}
\email{mingchen@pitt.edu}
\address{Department of Mathematics
University of Pittsburgh, Pittsburgh, 15260, USA.}
\email{dwang@math.pitt.edu }
\address{Institute of Applied Physics and Computational Mathematics,
                             Beijing 100088, China; and
Department of Mathematics, University of Pittsburgh, Pittsburgh,
15260, USA.} \email{hqwang111@163.com}
\subjclass[2010]{35Q35; 76N10; 76W05.}
\thanks{Corresponding author: hqwang111@163.com}
\keywords{The three-dimensional stochastic nonhomogeneous
incompressible Navier-Stokes equations;\, L\'{e}vy process; \,
Martingale solutions; \, Galerkin approximation method;
\,Compactness method.}\bigbreak
\date{September 10, 2017}
\begin{document}
\begin{abstract}
In this paper, the three-dimensional stochastic nonhomogeneous
incompressible Navier-Stokes equations driven by L\'{e}vy process
consisting of the Brownian motion, the compensated Poisson random
measure and the Poisson random measure are considered in a bounded
domain. We obtain the existence of martingale solutions. The
construction of the solution is based on the classical Galerkin
approximation method, stopping time, the compactness method and the
Jakubowski-Skorokhod theorem. 

\end{abstract}

\maketitle

\section{Introduction }
\setcounter{equation}{0}
L\'{e}vy processes were introduced by L\'{e}vy in 1937. They are
often applied to the term structure and credit risk areas. They also
have many important applications including option pricing and the
Black-Scholes formula.
For example, in financial mathematics, the classical model for a stock price is a geometric
Brownian motion. However, wars, decisions
of the federal reserve and other central banks, and other news can
cause the stock price to make a sudden shift. To model this, one
would like to represent the stock price by a L\'evy process which allows for jumps.

Another interesting application of the L\'evy processes can be found in the study of
the stochastic Navier-Stokes equations. The stochastic Navier-Stokes equations have a long history
as a model to understand turbulence in fluid mechanics, structural vibrations in aeronautical applications, and unknown random external forces such as sun heating and industrial pollution in atmospheric dynamics. In real physical situations, the random external forces may exhibit jumps and hence a purely continuous process is not enough to capture the full dynamics. This again motivates the needs for introducing jump processes in the system.

In this work we are concerned with the study of Navier-Stokes equations driven by L\'{e}vy processes.
Let $D\subset \mathbb{R}^3$ be a bounded domain with smooth boundary
$\partial D$ and $\Omega$ be a sample space. We consider the
following system of stochastic PDEs:
\begin{equation}\label{1.1}
\begin{cases}
\rho du+[\rho (u\cdot\nabla) u-\nu\Delta u+\nabla p]dt=\rho
f(t,u)dt+\rho g(t,u)d W+\int_Z\rho Ld\lambda,\\
d\rho+\Dv(\rho u)dt=0,\\
\Dv u=0,
\end{cases}
\end{equation}
in $\Omega\times [0,T]\times D$, with the initial data
\begin{equation}\label{1.2}
\rho|_{t=0}=\rho_0,\; u|_{t=0}=u_0,
\end{equation}
and the homogeneous boundary condition
\begin{equation}\label{1.3}
u|_{\partial D}=0.
\end{equation}
Here $\rho \geq 0$, $u=(u_1,u_2,u_3)\in\mathbb{R}^3$ and $p \in \mathbb{R}$ denote the
density, the velocity and the pressure, respectively; the viscosity
coefficient $\nu$ satisfies $\nu>0$; $\rho f(t,u)$ is the
deterministic external force; $Z$ is a measurable metric space; the random external force is characterized by the L\'{e}vy process $\rho g(t,u)dW + \int_Z\rho L
d\lambda$, where $W$ is an
$\mathbb{R}^d$-valued standard Brownian motion, and
$$Ld\lambda=\left\{\begin{array}{ll} F\left(
u(x,t-),z\right)\tilde{\pi}(dt,dz), & \text{if } |z|_Z<1,\\
G\left( u(x,t-),z\right)\pi(dt,dz), & \text{if } |z|_Z\ge1,\end{array}\right.$$
where
$\pi(dt,dz)$ is a time homogeneous Poisson measure,
$\tilde{\pi}(dt,dz)$ is the compensated Poisson measure associated
to $\pi$ which is defined as  $\tilde{\pi}(dt,dz)=\pi(dt,dz)-dt \mu(dz)$, where
$\mu(\cdot)=\mathbb{E}\pi(1,\cdot)$ is the intensity measure. Here $\mathbb{E}X=\int_{\Omega}XdP$ denotes
the expectation of the stochastic process $X(\omega,t),
\omega\in\Omega$.


\subsection{History of the problem} There have been extensive studies on the nonhomogeneous Navier-Stokes equations. In the deterministic case ($g = L = 0$), Kazhikhov \cite{KAV} obtained weak
solutions for initial density bounded away from zero. Simon \cite{SJ-1990} proved the global existence of strong solutions in two dimensions. For three-dimensional case,
Ladyhzenskaya-Solonnikov \cite{LS}, Padula \cite{PM,PM1} and Salvi
\cite{SR} established the local existence of strong solutions. The uniqueness of strong solutions in $\mathbb{R}^3$ was later proved by Choe-Kim \cite{CK1}.

When $g$ or $L$ does not vanish, the first and second equations in
\eqref{1.1} are stochastic. For viscous compressible flows,
Tornatore \cite{TE} obtained the existence and uniqueness of global solutions for the two-dimensional periodic barotropic fluids with an additive noise, i.e., the random external forcing is
independent of the fluid velocity $u$.
Feireisl-Maslowski-Novotn\'y in \cite{FMN} later
considered the three-dimensional problem in Sobolev spaces where the noise, under suitable weak formulation, can be regarded as an additive one. They managed to show the existence of strong solutions by using an abstract
measurability theorem \cite{BT-1973} and proved that the weak solution generates a random variable. When the noise is {\it multiplicative}, that is, the random external forcing depends on $u$, the problem becomes more involved. Some recent development on the existence of martingale solutions can be found in \cite{BH-2014,SSA,WW}.

For incompressible nonhomogeneous fluids, the existence of
martingale solutions to the equations (1.1)-(1.3) driven by an additive
noise was established
by Yashima \cite{YHF} with positive initial density. For general
multiplicative noise, Cutland-Enright \cite{CE} constructed strong
solutions by using the Loeb space techniques in two and
three-dimensional bounded domains. For the wellposedness of the homogeneous
stochastic incompressible Navier-Stokes equations, see
\cite{BA,BT-1973,BH-2000,BM,CG-1994,CP-2001,CP-1997,DZ,FG,FM-1995,
HZ,JKU,ME,ME-2014,MS-2002,MB-2004,MB-2005,SM,SS,TT} and the
references therein. See
\cite{BH-1999,CDK-2012,CNV-2014,CI-2011,PZ-1996, DA-2013,EMS-2001,
EH-2001,FGM-2008,HSV-2013,
HV-2014,HM-2011,HM-2006,HMW-2014,HP-2014,KMS-2000,LS-2000,MB-2004}
and the references therein for the studies and results on the
incompressible stochastic Euler equations, ergodicity of stochastic
partial differential equations, stochastic equations for turbulent
flows, stochastic conservation laws, and so on.

\subsection{Main results} In this paper we consider the existence of martingale solutions to
the three-dimensional stochastic nonhomogeneous incompressible
Navier-Stokes equations with L\'{e}vy processes. Our approach is
based on the Galerkin approximation scheme and the compactness
method. We will outline the main idea in the later part of the
section.

First, we define the concept of solutions for the problems
\eqref{1.1}-\eqref{1.3} as follows.
\begin{Definition}\label{definition1.1}
A martingale solution of \eqref{1.1}-\eqref{1.3} is a system
$((\Omega,\mathscr{F},\mathscr{F}_t,P), W, \pi,\rho, u)$, which
satisfies

(1) $(\Omega,\mathscr{F},\mathscr{F}_t,P)$ is a filtered probability
space with a filtration $\mathscr{F}_t$, i.e., a set of sub
$\sigma$-fields of $\mathscr{F}$ with
$\mathscr{F}_s\subset\mathscr{F}_t\subset\mathscr{F}$ for $0\le
s<t<\infty$,

(2) $W$ is a $d$-dimensional $\mathscr{F}_t$ standard Brownian
motion,

(3) $\pi$ is a time homogeneous Poisson random measure over
$(\Omega,\mathscr{F},\mathscr{F}_t,P)$ with the intensity measure
$\mu$,

(4) for almost every $t$, $\rho(t)$ and $u(t)$ are 
progressively measurable,

(5) $\rho\in L^\infty(\Omega, L^\infty(0,T; L^\infty(D)))$, $u\in
L^4(\Omega, L^\infty(0,T; H))\cap L^2(\Omega,L^2(0,T; V))$. For all
$t\in[0,T]$,  any $\varphi\in H^1(D)$ and $\psi\in V$ (see
\eqref{2.1b} for definition of $V$), the following holds $P$-a.s.
\begin{equation}\label{1.8}
\int_{D}\rho(t)\varphi dx-\int_{D} \rho_0\varphi
dx=\int_0^t\int_{D}\rho u\cdot\nabla\varphi dxds,
\end{equation}
and
\begin{equation}\label{1.9}
\begin{split}
\int_{D}(\rho u)(t)&\psi dx-\int_0^t\int_{D}\left(\rho u u
\nabla \psi-\nu\nabla u\cdot\nabla\psi\right)dxds-\int_{D}\rho_0u_0\psi dx\\
&=\int_0^t\int_{D}\rho f(s,u)\psi
dxds+\int_0^t\int_{D}\rho g(s,u)\psi dxdW\\
&\quad+\int_0^t\int_{|z|_{Z}<1}\int_D\rho
F\!\left( u(x,s-),z\right)\psi dx\tilde{\pi}(ds,dz)\\
&\quad+\int_0^t\int_{|z|_{Z}\ge1}\int_D\rho G\!\left(
u(x,s-),z\right)\psi dx\pi(ds,dz),
\end{split}
\end{equation}
and
\begin{equation}\label{1.10}
\rho|_{t=0}=\rho_0, \int_{D}\rho(0)u(0)\varphi
dx=\int_{D}\rho_0u_0\varphi dx.
\end{equation}
In the above, all stochastic integrals are defined in the sense of It\^{o}, see
\cite{AD,PZ-1992,ELC,KHH,KS-1991,PZ,RST}.
\end{Definition}

\medskip

Throughout this paper, we assume that the Brownian motion $W$ is independent of the compensated
Poisson measure $\tilde{\pi}(dt,dz)$. The intensity
measure $\mu$ on $Z$ satisfies the conditions $\mu(\{0\})= 0$,
$\int_Z(1\wedge|z|^2)\mu(dz)<\infty$ and $\int_{|z|_Z\ge1} |z|^p
\mu(dz)<\infty,\forall p\ge 1$.  We also assume that $\{\mathscr{F}_t\}$ is a right continuous filtration
over the probability space $(\Omega,\mathscr{F},P)$ such that
$\mathscr{F}_0$ contains all $P$-negligible subsets of $\Omega$.

Before we state our main theorem, we make the following assumptions on the external forces.

{\noindent}{\bf Assumption (A)}. Assume that $f: (0,T)\times
H\rightarrow H$ and $g: (0,T)\times H\rightarrow H^{\times d}$ are
continuous and nonlinear mappings, which satisfy the following
condition: there exists a positive constant $C$ such that
$$\norm{f(t,u)-f(t,v)}_{L^2(D)}\le C \norm{u-v}_{L^2(D)},\; \norm{g(t,u)-g(t,v)}_{L^2(D)}\le C \norm{u-v}_{L^2(D)},$$
\begin{equation*}
\|f(t,u)\|_{L^2(D)}\leq C\left(1+\norm{u}_{L^2(D)}\right),\;
\|g(t,u)\|_{L^2(D)}\leq C\left(1+\norm{u}_{L^2(D)}\right),
\end{equation*}
where $H^{\times d}$ is the product of $d$ copies of the space $H$ which is defined in \eqref{2.1a}.

{\noindent}{\bf Assumption (B)}. For all $t\in[0,T]$, there exists a
positive constant $C$ such that
\begin{equation}\label{1.6}
\begin{split}
\int_{|z|_Z<1}&\norm{F(u,z)-F(
v,z)}_{L^2(D)}^2\mu(dz)+\int_{|z|_Z\ge1}\norm{G(
u,z)-G(v,z)}_{L^2(D)}^2\mu(dz)\\
&\!\!\!\leq C\norm{u- v}_{L^2(D)}^2. 
\end{split}
\end{equation}
For each $p\ge 2$ and all $t\in[0,T]$, there exists a positive
constant $C$ such that
\begin{equation}\label{1.7}
\begin{split}
\int_{|z|_Z<1} \norm{F( u,z)}_{L^2(D)}^p\mu(dz)+ \int_{|z|_Z\ge1}
\norm{G(u,z)}_{L^2(D)}^p\mu(dz)\leq C
\left(1+\|u\|^p_{L^2(D)}\right). 
\end{split}
\end{equation}

Our main results are the following.
\begin{Theorem}\label{Theorem1.1}
Let the assumptions (A) and (B) be satisfied and assume that $u_0\in
L^2(D),\ \r_0\in L^\infty(D)$ satisfying $0<m\le\r_0\le M$. Then there exists a
martingale solution of problems \eqref{1.1}-\eqref{1.3} in the sense
of Definition \ref{definition1.1}.
\end{Theorem}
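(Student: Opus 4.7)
The plan is to construct the solution by combining a Galerkin approximation for the velocity with the transport equation for the density, derive uniform moment estimates via It\^o's formula for jump semimartingales, and pass to the limit through the Jakubowski--Skorokhod representation. Let $\{e_k\}_{k\ge1}\subset V$ be eigenfunctions of the Stokes operator and $P_n$ the projection onto $H_n:=\mathrm{span}\{e_1,\dots,e_n\}$. For each $n$ I seek $u_n=\sum_{k=1}^n c_k^n(t)e_k$ solving the SDE obtained by projecting \eqref{1.1} onto $H_n$, while $\rho_n$ solves the linear transport equation $\partial_t\rho_n+\Dv(\rho_n u_n)=0$, $\rho_n(0)=\rho_0$. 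Since $u_n$ is smooth in $x$ and divergence-free, the characteristics for $\rho_n$ preserve the pointwise bound $m\le\rho_n\le M$. A stopping time $\tau_R^n:=\inf\{t:\|u_n(t)\|_H\ge R\}$ tames the superlinear drift, and the classical theory of SDEs with jumps (cf.\ \cite{PZ,AD}) gives a unique local solution, which extends globally once the a priori bounds below are available.

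Applying It\^o's formula with jumps to $\|\sqrt{\rho_n}u_n\|_{L^2}^{2p}$ and exploiting incompressibility, Assumptions (A) and (B), and the uniform bound on $\rho_n$, I would derive
\begin{equation*}
\mathbb{E}\sup_{t\in[0,T]}\|\sqrt{\rho_n}u_n(t)\|_{L^2}^{2p}+\mathbb{E}\int_0^T\|\nabla u_n\|_{L^2}^2\,dt\le C(p,T),\qquad p\ge 1,
\end{equation*}
using the Burkholder--Davis--Gundy inequality for the Brownian part, its L\'evy analogue for the compensated Poisson integral, and $\int_{|z|_Z\ge1}|z|^p\mu(dz)<\infty$ for the large-jump integral. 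Together with the density bound, this gives uniform estimates for $\rho_n$ in $L^\infty(\Omega;L^\infty(0,T;L^\infty(D)))$ and for $u_n$ in $L^4(\Omega;L^\infty(0,T;H))\cap L^2(\Omega;L^2(0,T;V))$. A fractional-in-time bound $\mathbb{E}\|u_n\|_{W^{\alpha,2}(0,T;V^*)}^2\le C$ for some $\alpha<\tfrac12$, obtained piece-by-piece on the four parts of \eqref{1.1}, supplies the time-compactness needed to apply Aubin--Lions to the velocity.

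With these estimates the laws of $(\rho_n,u_n,W,\pi)$ are tight on the product of $L^\infty(0,T;L^\infty(D))$ equipped with the weak-$*$ topology, $L^2(0,T;H)\cap C([0,T];V^*_{\text{weak}})$, $C([0,T];\mathbb{R}^d)$, and the space of integer-valued measures on $[0,T]\times Z$; the last two factors are needed to retain the driving noise structure in the limit. Because the ambient topology is non-metric, I would invoke the Jakubowski--Skorokhod theorem to build a new stochastic basis on which a.s.-convergent versions $(\tilde\rho_n,\tilde u_n,\tilde W_n,\tilde\pi_n)\to(\tilde\rho,\tilde u,\tilde W,\tilde\pi)$ exist with the same joint laws. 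Standard arguments then identify $\tilde W$ as a Brownian motion and $\tilde\pi$ as a time-homogeneous Poisson random measure with intensity $dt\,\mu(dz)$, both adapted to the filtration generated by the limits.

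The \textbf{main obstacle} is the identification step. Passage to the limit in the convective term $\tilde\rho_n \tilde u_n\otimes \tilde u_n$ is delicate, since $\tilde\rho_n$ converges only weakly-$*$; I would couple the strong $L^2(0,T;H)$ convergence of $\tilde u_n$ with the strong continuity-in-time of $\tilde\rho_n$ tested against Lipschitz functions (coming from the transport equation) in order to establish convergence against any $\psi\in V$. Even harder is the passage to the limit in the two Poisson stochastic integrals, since the classical martingale representation does not apply directly; following Brze\'zniak--Hausenblas-type arguments (cf.\ \cite{BH-2014}), I would show that the angle brackets and compensators of the approximating c\`adl\`ag martingales converge to the expected expressions and then invoke a representation theorem to recover the Brownian and Poisson stochastic integral form of \eqref{1.9}. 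The density identity \eqref{1.8} and the initial conditions \eqref{1.10} follow from the strong convergence of $\tilde\rho_n$ tested against smooth functions together with the preservation of the initial data in the Galerkin scheme.
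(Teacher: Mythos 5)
Your overall architecture (Galerkin in the velocity coupled with a transport equation for the density, It\^o/BDG moment estimates under a stopping time, tightness, Jakubowski--Skorokhod) matches the paper's, but you diverge at two points worth recording. First, for the transport equation the paper deliberately avoids characteristics: because the Galerkin velocity is only c\`adl\`ag in time, it invokes the DiPerna--Lions theory (Proposition II.1 of \cite{DP}) to produce $\rho^n\in L^\infty$ with $0<c\le \rho^n\le C$; your characteristics argument can be repaired via Carath\'eodory ODE theory since $u_n$ is smooth in $x$ and bounded on each finite interval, but as written it glosses over the jump discontinuity in time. Second, and more substantially, your treatment of the stochastic integrals in the limit is exactly the ``usual method'' (identify the limiting c\`adl\`ag martingale, compute its quadratic variation and compensator, invoke a representation theorem) that the paper explicitly sets out to avoid. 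The paper instead first proves that the Skorokhod representatives satisfy the Galerkin equation exactly ($Y^{n_j}=0$ a.s., via equality of laws and Theorem 2.4 of \cite{BH-2011}), identifies $W$ as a Brownian motion through characteristic functions and $\pi$ as a time homogeneous Poisson random measure via \cite{BH}, and then passes to the limit in the stochastic integrals \emph{directly}: mollifying the integrand in time and integrating by parts for the Wiener integral, and using $\mathring{\pi}_n=\pi$ a.s. together with the continuity of the Poisson integral as a linear map for the jump terms, with Vitali's theorem and the uniform $L^{2p}$ moments supplying the required uniform integrability. Your route is legitimate and standard in the L\'evy-noise literature, but the representation theorem for Poisson random measures is considerably more delicate than its Brownian counterpart, so that step cannot simply be waved at. Relatedly, the paper carries $\rho^nu^n$ as a separate tight component in $L^2(0,T;W^{-\alpha,2}(D))$ precisely so that the convective term can be identified through the product estimates of Lemma \ref{Lemma2.1}; your plan to recover it from weak-$*$ convergence of $\tilde\rho_n$ and strong convergence of $\tilde u_n$ is workable but leans on the time-equicontinuity of $\rho^n$ in $H^{-1}(D)$, which the paper records as \eqref{4.33}.

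One concrete error: you place the laws of $u_n$ on $C([0,T];V^{*}_{\mathrm{weak}})$. The Galerkin velocities have genuine jumps in time (they are driven by Poisson integrals), so their paths are not continuous in any topology on $V^{*}$; tightness must be formulated either on the Skorokhod space $\mathbb{D}([0,T];H_w)$, as the paper notes in the remark following Lemma \ref{Lemma-Tight}, or only on $L^2(0,T;L^2(D))$ via the increment estimate \eqref{4.53a} and Simon's compactness criterion, which is what the paper actually does. The fractional Sobolev bound $W^{\alpha,2}(0,T;V^{*})$ with $\alpha<\tfrac12$ does survive for the compensated small-jump integral (the It\^o isometry still yields $\mathbb{E}\|I(t)-I(s)\|^2\le C|t-s|$), so the Aubin--Lions part of your compactness argument is fine; only the continuity-in-time claim must go.
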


\subsection{Outline of ideas} Theorem \ref{Theorem1.1} will be proved through the following steps. First we use the Faedo-Galerkin method to construct the
approximate solutions to the problem \eqref{1.1}-\eqref{1.3}.
More precisely, on the probability space
$(\Omega,\mathscr{F},P)$ with a given $d$-dimensional Brownian
motion $W$ and Poisson random measure $\pi$, for the finite-dimensional approximate
system we use the
Picard iteration to obtain a local solution
$(W_n,\pi_n,\rho^n,u^n)$ in a short time interval $[0, T_n]$.
Here, different from the deterministic situation, the velocity $u$ in general exhibits jump discontinuity (in time), and hence one cannot apply the standard method of characteristics to solve the transport equation for $\rho$. To overcome this difficulty, we adapt the result of DiPerna-Lions \cite{DP} on transport theory for less regular vector fields $u$ to obtain a solution $\rho \in L^\infty$. To obtain a uniform time interval $[0, T]$ of existence for all $n$, we need to derive the energy estimates. This can be done by applying
the stopping time and the Burkholder-Davis-Gundy
inequality.

The second step is to take a limit as $n\to \infty$ and prove the existence of Martingale solutions. From energy estimates, the approximate solutions $(W_n,\pi_n,\rho^n,u^n)$ may converge on $[0, T]$. However the convergence is too weak to guarantee that the limit is a solution on $[0, T]$. In the two-dimensional case, it can be shown by using certain monotonicity principle that the nonlinear terms converge to the right limit and hence a global strong solution can be obtained \cite{MS-2002}. But when the space dimension is three the monotonicity does not hold and to the best of our knowledge there is no result on the global strong solutions. This is why we pursue instead the Martingale solutions. As is explained, the main issue is the convergence of the nonlinear terms.

To this end, we relax the restriction on the probability space and
aim to prove a tightness result of the random variables
$(W_n,\pi_n,\rho^n,u^n)$. This can be obtained by applying the
Arzsela-Ascoli's Theorem combined with the Aubin-Simon Lemma
\cite{SJ}. Moreover in order to analyze the nonlinear terms, we
prove the tightness of $\rho^nu^n$ as well. Then from the
Jakubowski-Skorokhod Theorem \cite{JA} there exist a probability
space $(\mathring{\Omega}, \mathring{\mathscr{F}},\mathring{P})$ and
random variables
$(\mathring{W}_{n_j},\mathring{\pi}_{n_j},\mathring{\rho}^{n_j},\mathring{u}^{n_j},
\mathring{\rho}^{n_j}\mathring{u}^{n_j})\to (W,\pi,\rho,u,h)\ \ \!\!
\mathring{P}-a.s.$, with the property that the probability
distribution of $(\!\mathring{W}_{n_j}, \mathring{\pi}_{n_j},
\mathring{\rho}^{n_j},\mathring{u}^{n_j},\mathring{\rho}^{n_j}\mathring{u}^{n_j})$
is the  same as  that of $(W_n,\pi_n,\rho^n,u^n,\r^nu^n)$. By using
a cut-off function we can also show that the random variables
$(\mathring{W}_{n_j},\mathring{\pi}_{n_j},\mathring{\rho}^{n_j},\mathring{u}^{n_j})$
satisfy the approximate equations in $(\mathring{\Omega},
\mathring{\mathscr{F}},\mathring{P})$. When passing to a limit as
$n\to\infty$, the usual method is to show that the limit process of
the stochastic integral is a martingale, and to identify its
quadratic variation. Then apply the representation theorem for
martingales or the revised representation theorem (see \cite{JA}) to
prove that it solves the equations. But here instead, we can prove
that $W$ is a Brownian motion and $\pi$ is a time homogeneous
Poisson random measure. Then in view of the uniform integrability
criterion, Vitali's convergence theorem and mollification
techniques, together with the almost sure convergence on
$(\mathring{\Omega}, \mathring{\mathscr{F}},\mathring{P})$, we can
obtain that $(W,\pi,\r,u)$ satisfies the equations
\eqref{1.1}-\eqref{1.3} by passing to the limit directly. Therefore
it is a martingale solution of \eqref{1.1}-\eqref{1.3} in the sense
of Definition \ref{definition1.1}.

The rest of the paper is organized as follows. We recall some
analytic tools in Sobolev spaces and some basic theory of stochastic
analysis in Section 2.  In Section 3, we construct the
solutions to an approximate scheme by the Faedo-Galerkin method. In
Section 4, we prove the tightness property of the approximate solutions $(W_n,\pi_n,\rho^n,u^n)$ and then pass to
the limit as $n\to\infty$.

\medskip

\noindent{\bf Notation.} Throughout the paper we drop the parameter
$\omega\in\Omega$. Moreover, we use $C$ to denote a generic constant
which may vary in different estimates. \if false For simplicity, we
will write $A\lesssim B$ if $A\leq C B$. \fi

\section{Preliminaries}
\setcounter{equation}{0}
Let $H^1(D)$ denote the Sobolev space of all $u\in L^2(D)$ for
which there exist weak derivatives ${\partial u\over \partial
x_i}\in L^2(D), i=1,2,\ldots,d$.  Let $C_c^\infty(D)$ denote the
space of all $\mathbb{R}^d$ valued functions of class $C^\infty$
with compact supports contained in $D$ and define
\begin{equation*}
\mathscr{V} :=\{u\in C_c^\infty(D):\Dv u=0\},
\end{equation*}
\begin{align}\label{2.1a}
 H :=\mbox{the closure of}\ \ \mathscr{V} \ \ \mbox{in}\ \ L^2(D),
\end{align}
\begin{align}\label{2.1b}
 V :=\mbox{the closure of}\ \
\mathscr{V} \ \ \mbox{in}\ \ H^1(D).
\end{align}

In the space $H$, we consider the scalar product and the norm
inherited from $L^2(D)$ and denote them by
$\langle\cdot,\cdot\rangle_H$ and $|\cdot|_H$ respectively, i.e.
\begin{equation*}
\langle u,v\rangle_H=\langle u,v\rangle_{L^2(D)},\; \quad
|u|_H=\norm{u}_{L^2(D)},\; \quad u,v\in H.
\end{equation*}
In the space $V$ we consider the scalar product inherited from
$H^1(D)$, that is
\begin{equation*}
\langle u,v\rangle_V=\langle u,v\rangle_H+\langle\nabla u,\nabla
v\rangle_{L^2(D)}.
\end{equation*}
Let $p_*$ denote the Sobolev conjugate in $\mathbb{R}^3$ which is defined as
\begin{equation*}
p_* := \left\{\begin{array}{ll}
\displaystyle {3p \over 3 - p}, \quad & \text{if } 1\leq p < 3, \\\\
\text{any finite non-negative real number}, & \text{if } p = 3, \\
\infty, & \text{if } p>3.
\end{array}\right.
\end{equation*}
We first recall some properties of products in Sobolev spaces $W^{1,p}(D)$ with $p\ge1$.
%
\begin{Lemma}\label{Lemma2.1}
For $1\le p\le q\le \infty$, $f\in W^{1,p}(D) $ and $g\in
W^{1,q}(D)$, if $r\ge1$ and ${1\over r}={1\over p}+{1\over q_*}$,
then $fg \in W^{1,r}(D)$ and
$$
\norm{fg}_{W^{1,r}(D)}\le \norm{f}_{ W^{1,p}(D)}\norm{g}_{
W^{1,q}(D)}.
$$
For $h\in W^{-1,q}(D)$, if ${1\over p}+{1\over q}\le 1$
and ${1\over r}={1\over p_*}+{1\over q}$, then $fh \in W^{-1,r}(D)$ and
\[
\norm{fh}_{W^{-1,r}(D)}\le \norm{f}_{ W^{1,p}(D)}\norm{h}_{
W^{-1,q}(D)}.
\]
\end{Lemma}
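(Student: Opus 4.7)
The plan is to reduce everything to H\"older's inequality combined with the Sobolev embedding $W^{1,p}(D)\hookrightarrow L^{p_*}(D)$, which holds for bounded smooth domains $D\subset\mathbb{R}^3$. The key arithmetic identity that underlies both parts is
\[
\frac{1}{p}+\frac{1}{q_*}=\frac{1}{p_*}+\frac{1}{q},
\]
which follows from $1/p_*=1/p-1/3$ (and its analogue for $q$) in dimension three. This identity says the H\"older exponent $r$ can be reached either by pairing $f$ with $g$ via $q_*$ or by pairing $\nabla f$ with $g$ via $p_*$.

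For the first inequality, I would first justify the product rule $\nabla(fg)=f\nabla g+g\nabla f$ by mollifying $f$ and $g$ and passing to the limit, using that the product of Sobolev functions lies in $L^1_{\rm loc}$. Then I would estimate the three pieces separately: H\"older with exponents $(p,q_*)$ gives $\|fg\|_{L^r}\le\|f\|_{L^p}\|g\|_{L^{q_*}}\le C\|f\|_{W^{1,p}}\|g\|_{W^{1,q}}$ by Sobolev embedding on $g$; H\"older with $(p,q_*)$ gives $\|g\nabla f\|_{L^r}\le\|g\|_{L^{q_*}}\|\nabla f\|_{L^p}$; and H\"older with $(p_*,q)$ gives $\|f\nabla g\|_{L^r}\le\|f\|_{L^{p_*}}\|\nabla g\|_{L^q}$, after applying Sobolev to $f$. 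Summing and bounding each Sobolev-embedded factor by the full $W^{1,\cdot}$-norm yields the claim.

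For the second inequality I would define $fh$ by duality: for $\phi\in W^{1,r'}_0(D)$ with $1/r+1/r'=1$ set $\langle fh,\phi\rangle:=\langle h,f\phi\rangle$. The task then becomes to verify that multiplication by $f$ maps $W^{1,r'}_0(D)$ continuously into $W^{1,q'}_0(D)$ with norm controlled by $\|f\|_{W^{1,p}}$, because then
\[
|\langle fh,\phi\rangle|\le\|h\|_{W^{-1,q}}\|f\phi\|_{W^{1,q'}_0}\le\|h\|_{W^{-1,q}}\|f\|_{W^{1,p}}\|\phi\|_{W^{1,r'}_0}.
\]
This multiplication property is exactly the first part of the lemma applied to the pair $(p,r')$: from the hypothesis $1/r=1/p_*+1/q$ one computes $1/q'=1/r'+1/p_*=1/(r')_*+1/p$ (using the same conjugate identity), so the first part with target exponent $q'$ applies and gives the bound. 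Taking the supremum over $\phi$ with $\|\phi\|_{W^{1,r'}_0}\le1$ yields the desired $W^{-1,r}$ estimate.

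The main obstacle is bookkeeping: one must verify the case analysis built into the definition of the Sobolev conjugate $p_*$, in particular the borderline cases $p=3$ (where $p_*$ is any finite real) and $p=\infty$ (where $p_*=\infty$ and the Sobolev embedding degenerates to $L^\infty$), and the analogous cases for $q$, ensuring that the H\"older pairings and embeddings remain valid in each regime and that the constants absorb into a single factor. A minor technical point is the density and approximation argument needed to justify the chain rule for products of Sobolev functions, which is standard but should be invoked explicitly.
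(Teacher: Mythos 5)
The paper does not actually prove this lemma: it is stated as a recalled fact about products in Sobolev spaces (``We first recall some properties of products\dots''), with no argument supplied, so there is nothing to compare your proof against line by line. On its own merits your proposal is the standard and correct argument: Leibniz rule plus H\"older plus the embedding $W^{1,p}(D)\hookrightarrow L^{p_*}(D)$ for the first estimate, and duality $\langle fh,\phi\rangle=\langle h,f\phi\rangle$ reducing the second estimate to the first applied to the pair $(f,\phi)$. Two small points you should make explicit when writing it up: (i) the exponent identity $\tfrac1p+\tfrac1{q_*}=\tfrac1{p_*}+\tfrac1q$ holds as an equality only when both $p,q<3$; when $q\ge 3$ one instead gets $\tfrac1{p_*}+\tfrac1q\le\tfrac1r$ and must invoke the boundedness of $D$ to pass from $L^s$ to $L^r$ with $s\ge r$ (you flag this case analysis but do not carry it out), and (ii) the resulting inequality necessarily carries a constant $C=C(D,p,q)$ from the Sobolev embeddings, which is how the lemma is in fact used later in the paper (e.g.\ in \eqref{4.80} and \eqref{4.82}), even though the displayed statement omits it. In the duality step you should also verify that multiplication by $f$ preserves the zero-trace class, i.e.\ $f\phi\in W^{1,q'}_0(D)$ for $\phi\in W^{1,r'}_0(D)$, which follows by approximating $\phi$ by compactly supported functions.
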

\begin{Lemma}[\cite{LLJ}]\label{Lemma2.2}
Let $(g_k)_{k=1,2,\dots}$ and $g$ be functions in $L^q(0,T; L^q(D))$ for
$q\in(1,\infty)$ such that $\norm{g_k}_{L^q(0,T; L^q(D))}\le C$ for
any $k$ and $g_k\to g$ almost everywhere in $Q_T:=D\times[0,T]$ as
$k\to\infty$. Then $g_k$ converges weakly  to $g$ in $L^q(0,T;
L^q(D))$.
\end{Lemma}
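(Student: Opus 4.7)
The plan is to prove Lemma 2.2 by combining reflexivity of $L^q$, a uniqueness-of-limit argument, and a classical consequence of Egorov's theorem to identify the weak limit with the pointwise limit $g$.

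First I would fix $q\in(1,\infty)$ and exploit reflexivity: since $L^q(0,T;L^q(D))$ is reflexive and the sequence $\{g_k\}$ is norm-bounded by $C$, the Banach--Alaoglu theorem yields a subsequence $\{g_{k_j}\}$ and some $\tilde g\in L^q(0,T;L^q(D))$ with $g_{k_j}\rightharpoonup \tilde g$ weakly. The lower semicontinuity of the norm automatically gives $\norm{\tilde g}_{L^q(0,T;L^q(D))}\le C$. The whole game is then to show that $\tilde g=g$ a.e. on $Q_T$.

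To identify the limit, I would test against functions of the form $\phi\cdot \mathbf{1}_{E}$, where $\phi\in L^{q'}(Q_T)$ and $E\subset Q_T$ is measurable with finite measure. By Egorov's theorem applied to the a.e.\ convergence $g_k\to g$, for any $\varepsilon>0$ there exists $E_\varepsilon\subset E$ with $|E\setminus E_\varepsilon|<\varepsilon$ on which $g_k\to g$ uniformly. On $E_\varepsilon$ the integral $\int_{E_\varepsilon} g_{k_j}\phi\,dx\,dt$ converges to $\int_{E_\varepsilon} g\,\phi\,dx\,dt$ by dominated convergence. On $E\setminus E_\varepsilon$ I would use the uniform $L^q$-bound together with H\"older's inequality to control $\int_{E\setminus E_\varepsilon} g_{k_j}\phi\,dx\,dt$ by $C\,\norm{\phi\,\mathbf{1}_{E\setminus E_\varepsilon}}_{L^{q'}}$, which tends to $0$ as $\varepsilon\to 0$ by absolute continuity of the integral (since $q'<\infty$). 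Comparing this with the weak-limit identity $\int_E \tilde g\,\phi\,dx\,dt=\lim_j\int_E g_{k_j}\phi\,dx\,dt$ and letting $\varepsilon\to 0$, I obtain
\[
\int_E \tilde g\,\phi\,dx\,dt=\int_E g\,\phi\,dx\,dt
\]
for all such test functions, hence $\tilde g=g$ a.e. (Equivalently one can invoke Vitali's convergence theorem directly: the $L^q$-bound with $q>1$ forces equi-integrability of $\{g_k\}$, which together with a.e.\ convergence yields $L^1_{\mathrm{loc}}$-convergence to $g$, and this identifies the weak limit.)

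Finally, to upgrade subsequential weak convergence to convergence of the whole sequence, I would run the standard Urysohn-subsequence argument: any subsequence of $\{g_k\}$ inherits the hypotheses (boundedness and a.e.\ convergence to $g$), so by the reasoning above it has a further subsequence converging weakly to $g$; hence the entire sequence converges weakly to $g$ in $L^q(0,T;L^q(D))$. I expect the only delicate point to be the identification step, where one must be careful to use reflexivity (requiring $q>1$) and either Egorov or Vitali to pass from a.e.\ pointwise convergence to integral-against-test-function convergence; the rest is standard functional analysis.
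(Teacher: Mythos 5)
Your proof is correct. The paper gives no proof of this lemma---it is quoted directly from Lions \cite{LLJ} (Lemme 1.3, Ch.~1)---and your argument (weak compactness from reflexivity and the uniform bound, identification of the limit via Egorov on the finite-measure set $Q_T$ plus H\"older and absolute continuity on the exceptional set, then the subsequence principle) is essentially the standard proof found in that reference, so there is nothing to compare against or correct.
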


For a probability space $(\Omega,\mathscr{F},P)$ and a Banach space
$X$, denote by  $L^p(\Omega,L^q(0,T;X))\, (1\le p,q<\infty)$ the
space  of random functions defined on $\Omega$ with value in
$L^q(0,T; X)$,  endowed with the norm:
\begin{equation*}
\|u\|_{L^p(\Omega,L^q(0,T;X))}=\left(\mathbb{E}\ \|u\|^p_{L^q(0,T;X)}\right)^{1\over
p}.
\end{equation*}
If $q=\infty$, we write
\begin{equation*}
\|u\|_{L^p(\Omega,L^\infty(0,T;L^q(X)))}=\left(\mathbb{E}\ \ess_{0\le t\le
T}\|u\|^p_{L^q(X)}\right)^{1\over p}.
\end{equation*}
\begin{Remark}\label{Remark2.1}
Note that the result of Lemma \ref{Lemma2.2} also holds for the space
$L^q(\Omega; L^q(0,T; D))$ in $\Omega\times Q_T$.
\end{Remark}

We now list a few preliminary results of stochastic analysis and
useful tools for the sake of convenience and completeness. For
details, we refer the readers to
\cite{AD,PZ-1992,ELC,KHH,KS-1991,PZ,RST} and the references therein.
In particular, we will introduce the definitions of \if false
Brownian motion, progressively measurable process, \fi time
homogenous Poisson random measure, L\'{e}vy process, \if false
martingale, \fi stopping time, It\^{o} formula and the BDG
inequality and so on.

\if false
\begin{Definition}\label{Def2.1}
A stochastic process $W(t,\omega)$ is called a {\em Brownian motion}
if it satisfies the following conditions:

(1) $P\{\omega: W(0,\omega)=0\}=1.$

(2) For any $0\le s\le t$, the random variable $W(t)-W(s)$ is
normally distributed with mean 0 and variance $t-s$, i.e., for any
$a<b$,
\begin{equation*}
P\left\{a\le W(t)-W(s)\le b\right\}={1\over\sqrt{2\pi(t-s)}}\int_a^b
e^{-x^2\over2(t-s)}dx.
\end{equation*}

(3) $W(t,\omega)$ has independent increments, i.e., for any $0\le
t_1<t_2<\ldots<t_n$, the random variables $W(t_1),
W(t_2)-W(t_1),\ldots, W(t_n)-W(t_{n-1})$ are independent.

(4) Almost all simple paths of $W(t,\omega)$ are continuous
functions, i.e.,
\begin{equation*}
P\left\{\omega: W(\cdot,\omega) \mbox{ is continuous}\right\}=1.
\end{equation*}
\end{Definition}

\begin{Definition}\label{Def2.2}  
A stochastic process $X$ is called {\em progressively measurable}
with respect to the filtration $\{\mathscr{F}_t\}$ if for each
$t\ge0$ and $A\in\mathscr{B}(\mathbb{R}^n)$ (Borel sets on
$\mathbb{R}^n$), the set $\{(s,\omega):0\le s\le t, \omega\in\Omega,
X_s(\omega)\in A\}$ belongs to the product $\sigma$-field
$\mathscr{B}([0,t])\otimes\mathscr{F}_t$; in other words, if the
mapping $(s,\omega)\mapsto
X_s(\omega):([0,t]\times\Omega,\mathscr{B}([0,t])\otimes\mathscr{F}_t)\rightarrow
(\mathbb{R}^n,\mathscr{B}(\mathbb{R}^n))$ is measurable for each
$t\ge0$.
\end{Definition}
\fi

\begin{Definition}\label{def_filtration}
A {\em filtration} on the parameter set $\mathbb{T}$ is an increasing
family $\{\mathscr{F}_t: t\in \mathbb{T}\}$ of $\sigma$-algebra. A
stochastic process $X_t,\ t\in \mathbb{T}$ is said to be adapted to
$\{\mathscr{F}_t: t\in \mathbb{T}\}$ if for each $t$, the random
variable $X_t$ is $\mathscr{F}_t$-measurable.
\end{Definition}

Denote $\bar{\mathbb{N}}:=\mathbb{N}\cup\{\infty\}$,
$\mathbb{R}_+:=[0,\infty)$. Let $(Z,\mathscr{Z})$ be a measurable
space. Then by $M(Z)$ we denote the set of all real valued measures
on $(Z,\mathscr{Z})$, and $\mathscr{M}(Z)$ denotes the
$\sigma$-field on $M(Z)$ generated by functions $i_B:
\mu\mapsto\mu(B)\in\mathbb{R}$ for $\mu\in M(Z),B\in\mathscr{Z}$.
Next, we denote the set of all non-negative measures on $Z$ by
$M_+(Z)$, and $\mathscr{M}_+(Z)$ denotes the $\sigma$-field on
$M_+(Z)$ generated by functions $i_B:
M_+(Z)\ni\mu\mapsto\mu(B)\in\mathbb{R}_+$, $B\in\mathscr{Z}$. Finally,
by $M_{\bar{\mathbb{N}}}(Z)$ we denote the family of all
$\bar{\mathbb{N}}$-valued measures on $(Z,\mathscr{Z})$, and
$\mathscr{M}_{\bar{\mathbb{N}}}(Z)$ denotes the $\sigma$-field on
$M_{\bar{\mathbb{N}}}(Z)$ generated by functions $i_B:
M_{\bar{\mathbb{N}}}\ni\mu\mapsto\mu(B)\in\bar{\mathbb{N}}, B\in\mathscr{Z}$.
\begin{Definition}\label{defA.2}
Let $(Z,\mathscr{Z})$ be a measurable space and
$\mu\in\mathscr{M}_+(Z)$. A measurable function $\pi:
(\Omega,\mathscr{F})\rightarrow(M_{\bar{\mathbb{N}}}(Z\times\mathbb{R}_+),\mathscr{M}_{\bar{\mathbb{N}}}(Z\times\mathbb{R}_+))
$ is called a {\em time homogenous Poisson random measure} on
$(Z,\mathscr{Z})$ over $(\Omega,\mathscr{F},\mathscr{F}_t,P)$ if and
only if the following conditions are satisfied

(1) for each $B\in\mathscr{Z}\otimes\mathscr{B}(\mathbb{R}_+),\;
\pi(B):=i_B\circ\pi: \Omega\rightarrow\bar{\mathbb{N}}$ is a Poisson
random variable with parameter $\mathbb E\pi(B)$ (If $\mathbb E \pi(B)=\infty$, then
$\pi(B)=\infty$ );

(2) $\pi$ is independently scattered, that is, if the sets
$B_j\in\mathscr{Z}\otimes\mathscr{B}(\mathbb{R}_+), j=1,2,\dots,n$
are pair-wise disjoint, then the random variables $\pi(B_j),
j=1,2,\dots,n $ are pair-wise independent;

(3) for all $B\in\mathscr{Z}$ and $I\in\mathscr{B}(\mathbb{R}_+)$,
$\mathbb E[\pi(B\times I)]=\lambda(I)\mu(B)$, where $\lambda$ is Lebesgue
measure;

(4) for each $U\in\mathscr{Z}$, the $\bar{\mathbb{N}}$-valued
process $(N(t,U))_{t\ge 0}$ defined by $N(t,U):=\pi(U\times(0,t]),
t\ge 0$ is $\mathscr{F}_t$-adapted and its increments are
independent of the past, i.e. if $t>s\ge0$, then
$N(t,U)-N(s,U)=\pi(U\times(s,t])$ is independent of $\mathscr{F}_s$.
\end{Definition}

Now  we turn to the definition of a L\'{e}vy process.
\begin{Definition}
Let $\mathbb{B}$ be a Banach space. A stochastic process $L=\{L(t):
t\ge 0\}$ over $(\Omega,\mathscr{F},\mathscr{F}_t,P)$ is called an
$\mathbb{B}$-valued {\em L\'{e}vy process}  if the following
conditions are satisfied.

(1) $L(t)$ is $\mathscr{F}_t$-measurable for any $t\ge 0$;

(2) the random variable $L(t)-L(s)$ is independent of
$\mathscr{F}_s$ for any $0\le s<t$;

(3) $L(0)=0$ a.s.;

(4) For all $0\le s<t$, the law of $L(t+s)-L(s)$ does not depend on
$s$;

(5) $L$ is stochastically continuous;

(6) the trajectories of $L$ are c\`{a}dl\`{a}g in $\mathbb{B}$
$P$-a.s., i.e. which are right-continuous with left limits.
\end{Definition}

Note that we can construct a corresponding Poisson random measure
from a L\'{e}vy process. For example, given a $\mathbb{B}$-valued
L\'{e}vy process over $(\Omega,\mathscr{F},\mathscr{F}_t,P)$, one
can construct an integer-valued random measure in the following way: for
each $(B, I)\in
\mathscr{B}(\mathbb{R})\times\mathscr{B}(\mathbb{R}_+)$, define
\begin{equation*}
\pi_L(B, I):=\sharp\{t\in I~|~\Delta_t L\in B\}\in\bar{\mathbb{N}}.
\end{equation*}
where $\Delta_t L(t):=L(t)-L(t-)=L(t)-\lim_{s\uparrow t}L(s), t>0$
and $\Delta_0 L:=0$. If $\mathbb{B}=\mathbb{R}^d$, then $\pi_L$ is a
time homogeneous Poisson random measure, for details see \cite[Chapter 4, Theorem
19.2]{SKI}.  Conversely, given a Poisson random
measures, we can also construct a corresponding L\'{e}vy process.

\if false
\begin{Definition}\label{Def2.3}
Let $X_t(\omega)=X(\omega, t)$ be a scalar stochastic process
adapted to a filtration $\mathscr{F}_t$ and $E|X_t|<\infty$ for all
$t\in \mathbb{T}$, and $E(X_t|\mathscr{F}_s)$ be the conditional
expectation of $X_t$ given $\mathscr{F}_s$. Then $X_t$ is called a
{\em martingale} with respected to $\mathscr{F}_t$ if for any $s, t
\in \mathbb{T}$, $s\le t$, $E(X_t|\mathscr{F}_s)=X_s$ a.s.. If
$E(X_t|\mathscr{F}_s)\ge (\le) X_s$ a.s.,  $X_t$ is called a
submartingale (supmartingale).
\end{Definition}
\fi
\begin{Definition}\label{Def2.4}
A random variable $\tau(\omega)$ with values in the parameter set
$\mathbb{T}$ is a {\em stopping time} of the filtration
$\mathscr{F}_t$ if $\{\omega: \tau(\omega)\le t\}\in \mathscr{F}_t$
for each $t\in \mathbb{T}$.
\end{Definition}

\if false
\begin{Definition}\label{Def2.5}
An $\mathscr{F}_t$ adapted stochastic process $X_t, t\in [a,b]$ is
called a {\em local martingale} with respect to $\mathscr{F}_t$ if
there exists a sequence of stopping times $\tau_n$, $n=1, 2,
\ldots$,  such that

(1) $\tau_n$ increases monotonically to $b$ almost surely as
$n\to\infty$;

(2) For each $n$, $X_{t\wedge{\tau_n}}$ is a martingale with respect
to $\left\{\mathscr{F}_t, t\in [a,b]\right\}$.
\end{Definition}

\begin{Lemma}\label{Thm2.1}
Let $B(t)$ be a Brownian motion,  $f(t)$  be a random function
adapted to the filtration $\mathscr{F}_t$,  and
$\int_a^b|f(t)|^2dt<\infty$ almost surely. Then the stochastic
process $X_t=\int_a^t f(s)dB(s)$, $t\in[a,b]$ is a local martingale
with respect to the filtration $\left\{\mathscr{F}_t, t\in
[a,b]\right\}$.
\end{Lemma}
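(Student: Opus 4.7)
The plan is to proceed by the standard localization argument. The key preliminary observation is that when the integrand satisfies the stronger condition $\mathbb{E}\int_a^b |f(t)|^2\,dt < \infty$, the It\^o integral $\int_a^t f\,dB$ is a genuine square-integrable $\mathscr{F}_t$-martingale; this follows from the It\^o isometry together with the construction of the stochastic integral via simple adapted integrands, and is available from any of the textbooks cited just before the statement. I would therefore reduce the present hypothesis, in which $\int_a^b |f(t)|^2\,dt < \infty$ holds only almost surely, to this stronger integrability condition by a suitable sequence of stopping times.

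Concretely, first I would introduce the increasing, continuous, $\mathscr{F}_t$-adapted process $A_t := \int_a^t |f(s)|^2\,ds$ and define
$$
\tau_n := \inf\{t \in [a,b] : A_t \ge n\} \wedge b, \qquad n = 1, 2, \dots
$$
Since $A_t$ is continuous and adapted, each $\tau_n$ is a stopping time in the sense of Definition \ref{Def2.4}. The sequence $\{\tau_n\}$ is non-decreasing in $n$, and because $A_b = \int_a^b |f(s)|^2\,ds < \infty$ almost surely, for each $\omega$ outside a null set we have $\tau_n(\omega) = b$ as soon as $n > A_b(\omega)$. Hence $\tau_n \nearrow b$ almost surely, which is the first ingredient required to invoke the definition of a local martingale.

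Next I would consider the truncated integrand $f_n(s) := f(s)\,\mathbf{1}_{[a,\tau_n]}(s)$. This remains progressively measurable, as it is the product of the progressively measurable $f$ with the indicator of a stochastic interval determined by a stopping time, and by construction
$$
\mathbb{E}\int_a^b |f_n(s)|^2\,ds \;=\; \mathbb{E}\,A_{\tau_n} \;\le\; n \;<\; \infty.
$$
Therefore $X_t^{(n)} := \int_a^t f_n(s)\,dB(s)$ is a square-integrable $\mathscr{F}_t$-martingale on $[a,b]$ by the classical $L^2$ result recalled above. The standard identity relating stopped It\^o integrals to integrals of truncated integrands then gives
$$
X_t^{(n)} \;=\; \int_a^{t\wedge \tau_n} f(s)\,dB(s) \;=\; X_{t\wedge\tau_n},
$$
so each stopped process $X_{\,\cdot\, \wedge \tau_n}$ is a martingale. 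Together with $\tau_n \nearrow b$ a.s., this is precisely the statement that $X_t = \int_a^t f\,dB$ is a local martingale with respect to $\{\mathscr{F}_t, t\in[a,b]\}$.

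The main technical obstacle I anticipate is justifying the identity $X_t^{(n)} = X_{t\wedge \tau_n}$, which requires either an optional sampling argument for It\^o integrals or a direct verification through approximation by elementary adapted processes, for which the identity holds trivially; one then passes to the limit using the It\^o isometry together with the pointwise bound $|f_n|\le |f|$. A secondary but routine point is checking the progressive measurability of $f_n$ and the stopping-time property of $\tau_n$ for a general adapted (not necessarily progressively measurable) $f$; in the setting at hand one may, without loss of generality, replace $f$ by a progressively measurable modification with the same paths a.e., since $A_t$ depends only on the $L^2$-equivalence class of $f$.
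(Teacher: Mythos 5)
Your localization argument is correct and is the standard proof of this fact. Note that the paper itself offers no proof of this lemma at all: it is listed among the "preliminary results of stochastic analysis" that are simply quoted from the cited textbooks (and in the source it is in fact enclosed in an \verb|\if false| block, i.e.\ commented out of the compiled version), so there is no in-paper argument to compare against. Your proof --- stopping times $\tau_n=\inf\{t: \int_a^t|f|^2\,ds\ge n\}\wedge b$, the reduction to the square-integrable case via $\mathbb{E}\int_a^b|f\mathbf{1}_{[a,\tau_n]}|^2\,ds\le n$, and the identification $\int_a^{\cdot}f\mathbf{1}_{[a,\tau_n]}\,dB=X_{\cdot\wedge\tau_n}$ --- is exactly the canonical argument in those references, and your handling of the two technical points (the stopped-integral identity via approximation by simple processes, and passing from adapted to progressively measurable integrands) is appropriate.
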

\fi
 Let us now recall the It\^{o} formula for general L\'{e}vy-type
stochastic integrals, see \cite{AD,PZ,RST}. 
We define $\mathcal{P}_2(T, \mathbb{B})$ to be the set of all
equivalence classes of mappings $f: [0, T]\times \mathbb{B}\times
\Omega\to\mathbb{R}$ which coincide almost everywhere with respect
to $\varrho\times P$ and which satisfy the following conditions:

(1) $f$ is predictable;

(2) $P\left(\int_0^T\int_{\mathbb{B}}  |f(t,x)|^2\varrho(dt,dx)<\infty\right)=1$.\\
Here $\varrho(t,A)= m((0,t]\times A)$, where $m$ is standard
Lebesgue measure. With this we are ready to give the It\^{o} formula
\cite[Page 251, Theorem 4.4.7]{AD} for general L\'{e}vy-type
stochastic integrals. Let $X$ be the following process
\begin{equation}\label{Represent1}
dX(t)=G(t)dt+F(t)dB(t)+\int_{|x|<1}
H(t,x)\tilde{\pi}(dt,dx)+\int_{|x|\ge1} K(t,x)\pi(dt,dx),
\end{equation}
where for each $t\ge 0$, $|G|^{1\over2},F\in \mathcal{P}_2(T)$ and
$H\in\mathcal{P}_2(T, \mathbb{B})$. Furthermore, we take $\mathbb{B}=\{x\in \mathbb{R}^d:
0<|x|<1\}$ and $K$ to be
predictable so that the left-continuous $\mathscr{F}_t$-adapted
processes $K_t(\omega):[0,\infty)\times \Omega\to
K_t(\omega)\in\mathbb{R}^d$ is measurable, where $\omega\in\Omega$,
see \cite[Page 7]{RST}. Denote 
\begin{equation*}
dX_c(t)=G(t)dt+F(t)dB(t),
\end{equation*}
and
\begin{equation*}
dX_d(t)=\int_{|x|<1} H(t,x)\tilde{\pi}(dt,dx)+\int_{|x|\ge1}
K(t,x)\pi(dt,dx),
\end{equation*}
so that for each $t\ge 0$, we have
\begin{equation*}
X(t)=X(0)+X_c(t)+X_d(t).
\end{equation*}
 Assume that for all
$t>0$, $\sup_{0\le s\le t, \ 0<|x|<1}H(s,x)<\infty$ a.s., then
one has
\begin{Lemma}[It\^{o}'s formula,\cite{AD}]\label{Ito}
If $X$ is a L\'{e}vy-type stochastic integral of the form
\eqref{Represent1}, then for each $\Phi\in C^2(\mathbb{R}^n),\;
t\ge0$, with probability 1 we have
\begin{equation}
\begin{split}
\Phi&(X(t))-\Phi(X(0))=\int_0^t
\partial_i \Phi(X(s-))dX_c^i(s)+{1\over2}\int_0^t
\partial_i\partial_j\Phi(X(s-))d[X_c^i,X_c^j](s)\\
&+\int_0^t \int_{|x|\ge
1}\left[\Phi(X(s-)+K(s,x))-\Phi(X(s-))\right]\pi(ds,dx)\\
&+\int_0^t \int_{|x|<1
}\LB \Phi(X(s-)+H(s,x))-\Phi(X(s-))\RB \tilde{\pi}(ds,dx)\\
&+\int_0^t\int_{|x|<1
}\left[\Phi(X(s-)+H(s,x))-\Phi(X(s-))-H^i(s,x)\partial_i
\Phi(X(s-))\right]\mu(dx)ds
\end{split}
\end{equation}
\end{Lemma}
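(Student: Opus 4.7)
The plan is to prove the It\^o formula by decomposing the semimartingale $X=X(0)+X_c+X_d$ and treating each part separately before pasting them together. Since the statement is classical (compare \cite{AD}), I would follow the standard route: localization, a continuous-semimartingale step, a finite-activity jump step, and a compensated limit passage for small jumps.

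\textbf{Reduction and continuous part.} First, via the localizing stopping times $\tau_n:=\inf\{t\ge 0:|X(t)|\ge n\}$, one may assume $X$ is bounded on $[0,t\wedge\tau_n]$, so that $\Phi$ together with its first and second derivatives are uniformly bounded on the range of $X$. Next, I isolate the continuous semimartingale $X_c$ driven by $B$ and the absolutely continuous drift $G$, and apply the classical It\^o formula for continuous semimartingales on the intervals between successive jumps of $X_d$. This supplies the first two terms of the claimed identity with $d[X_c^i,X_c^j](s)=(FF^\top)^{ij}(s)\,ds$.

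\textbf{Large jumps.} For the large-jump part I would exploit that $\pi$ restricted to $\{|x|\ge 1\}\times[0,t]$ has only finitely many atoms almost surely, since $K$ is not compensated and the corresponding Poisson process is a finite-activity jump process. Denoting these jump times by $\tau_1<\tau_2<\cdots\le t$, the change in $\Phi$ across each $\tau_k$ is exactly $\Phi(X(\tau_k-)+K(\tau_k,\Delta_{\tau_k}L))-\Phi(X(\tau_k-))$. Summing over $\tau_k\le t$ and recasting the sum as an integral against $\pi(ds,dx)$ produces the third term verbatim.

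\textbf{Small jumps and the limit.} The delicate part is the compensated small-jump integral. Here I would truncate by $H_\varepsilon(t,x):=H(t,x)\mathbf{1}_{|x|>\varepsilon}$, whose associated process has only finitely many jumps on bounded intervals, so the jump-by-jump telescoping gives
\begin{equation*}
\int_0^t\!\!\int_{\varepsilon<|x|<1}\!\bigl[\Phi(X(s-)+H(s,x))-\Phi(X(s-))\bigr]\pi(ds,dx).
\end{equation*}
Writing $\pi=\tilde\pi+\mu(dx)\,ds$ on this piece and performing a first-order Taylor expansion on the resulting $\mu$-integral separates the compensated stochastic integral of $\Phi(X(s-)+H)-\Phi(X(s-))$ from the drift correction built out of the Taylor remainder $\Phi(X(s-)+H)-\Phi(X(s-))-H^i\partial_i\Phi(X(s-))$. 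Passing $\varepsilon\to 0$ is the main obstacle: convergence of the compensated piece is controlled by the $L^2$-isometry for compensated Poisson integrals using the hypothesis $H\in\mathcal{P}_2(T,\mathbb{B})$, whereas the drift correction requires $L^1$-control of the Taylor remainder via $|\Phi(y+h)-\Phi(y)-h\cdot\nabla\Phi(y)|\le\tfrac12\|\Phi''\|_\infty|h|^2$. The integrability $\int_{|x|<1}|H(s,x)|^2\mu(dx)<\infty$ follows from the standing assumption $\int_Z(1\wedge|z|^2)\mu(dz)<\infty$ combined with $\sup_{s\le t,\,0<|x|<1}H(s,x)<\infty$ a.s., which is precisely why that hypothesis is imposed. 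Once the $\varepsilon\to 0$ limit is justified and the three parts are combined, undoing the localization $\tau_n\uparrow\infty$ by monotone/dominated convergence yields the formula in full generality.
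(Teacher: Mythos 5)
The paper does not prove this lemma at all: it is quoted verbatim as a known result from the cited reference (Applebaum, Theorem 4.4.7), so there is no in-paper argument to compare against. Your sketch reproduces the standard textbook proof --- localization, the classical continuous-semimartingale It\^{o} formula between jumps, jump-by-jump telescoping for the finite-activity large-jump part, and the $\varepsilon$-truncation of small jumps with the split $\pi=\tilde{\pi}+\mu(dx)\,ds$, Taylor expansion, and an $L^2$-isometry limit --- and this is essentially the argument in the cited source, correct in outline. One imprecision worth fixing: $\int_{|x|<1}|H(s,x)|^2\mu(dx)<\infty$ does \emph{not} follow from $\sup_{s,x}|H(s,x)|<\infty$ together with $\int_Z(1\wedge|z|^2)\mu(dz)<\infty$, since $\mu(\{0<|x|<1\})$ may be infinite for a L\'{e}vy measure; the required square-integrability is exactly the hypothesis $H\in\mathcal{P}_2(T,\mathbb{B})$ (and the bound $|\Phi(y+h)-\Phi(y)-h\cdot\nabla\Phi(y)|\le\tfrac12\|\Phi''\|_\infty|h|^2$ then makes the drift correction $\mu\times ds$-integrable), so you should invoke that assumption rather than the boundedness of $H$.
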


We now recall the following so-called BDG inequality in stochastic
analysis, see\cite[Page 37, Theorem 3.50]{PZ}.
\begin{Lemma}[Burkholder-Davis-Gundy inequality]\label{Thm2.3}  Let $T>0$, for every fixed $p\ge1$, there is a
constant $C_p\in(0,\infty)$ such that for every real-valued square
integrable c\`{a}adl\`{a}g martingale $M_t$ with $M_0=0$, and for
every $T\ge0$,
\begin{equation*}
C_p^{-1}\mathbb E\left(\langle M\rangle^{p\over 2}_T\right)\le
\mathbb E\left(\max_{0\le t\le T}|M_t|^p\right)\le C_p \mathbb E\left(\langle
M\rangle^{p\over 2}_T\right),
\end{equation*}
where $\langle M\rangle_t$ is the quadratic variation of $M_t$ and
the constant $C_p$ does not depend on the choice of $M_t$.
\end{Lemma}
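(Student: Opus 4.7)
The plan is to establish the two-sided inequality in three stages: the base case $p=2$, then $p>2$ by an It\^o-type computation, and finally $1\le p<2$ by a good-$\lambda$ argument that transfers the $p=2$ bound. Throughout, by localizing at the stopping times $\tau_n:=\inf\{t\le T:\ |M_t|\ge n\ \text{or}\ \langle M\rangle_t\ge n\}$ I may assume $M$ and $\langle M\rangle$ are uniformly bounded, so all expectations below are finite; the general case follows by sending $n\to\infty$ and applying monotone convergence on the right-hand sides. The base case $p=2$ is essentially the martingale isometry: since $M_t^2-\langle M\rangle_t$ is a c\`adl\`ag martingale starting at $0$, one has $\mathbb{E}|M_T|^2=\mathbb{E}\langle M\rangle_T$, and Doob's $L^2$-maximal inequality yields $\mathbb{E}\sup_{t\le T}|M_t|^2\le 4\,\mathbb{E}\langle M\rangle_T$, while the reverse bound $\mathbb{E}\langle M\rangle_T=\mathbb{E}|M_T|^2\le \mathbb{E}\sup_{t\le T}|M_t|^2$ is immediate.

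For $p>2$, I would apply It\^o's formula (Lemma \ref{Ito}) to the regularized power $\Phi_\varepsilon(x):=(x^2+\varepsilon)^{p/2}\in C^2(\mathbb{R})$, which removes the singularity of $|x|^p$ at the origin. This expands $\Phi_\varepsilon(M_t)$ into a local-martingale piece plus a continuous drift of the form $\tfrac{p(p-1)}{2}\int_0^t(M_{s-}^2+\varepsilon)^{(p-2)/2}\,d\langle M^c\rangle_s$ and jump contributions that can be dominated pointwise by $C_p(|M_{s-}|^{p-2}+|\Delta M_s|^{p-2})|\Delta M_s|^2$. Taking expectations, letting $\varepsilon\downarrow 0$, and applying H\"older's inequality to the mixed terms (pairing $|M|^{p-2}$ with the quadratic variation increment) together with Doob's inequality on the maximal function, I obtain a self-improving estimate of the form
\begin{equation*}
\mathbb{E}\sup_{t\le T}|M_t|^p\ \le\ A_p\,\mathbb{E}\langle M\rangle_T^{p/2}\ +\ \tfrac{1}{2}\,\mathbb{E}\sup_{t\le T}|M_t|^p,
\end{equation*}
in which the last term is absorbed to yield the upper bound. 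The reverse direction comes from running the same It\^o identity in the dual form (expanding $\langle M\rangle_T^{p/2}$) and invoking Doob plus H\"older with exponent $p/(p-2)$.

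For $1\le p<2$, the central tool is a good-$\lambda$ inequality: there exist $\beta>1$ and $c>0$ such that for every $\delta\in(0,1)$ and $\lambda>0$,
\begin{equation*}
P\bigl(M^*_T>\beta\lambda,\ \langle M\rangle_T^{1/2}\le\delta\lambda\bigr)\ \le\ c\delta^2\,P(M^*_T>\lambda),
\end{equation*}
where $M^*_T:=\sup_{t\le T}|M_t|$. This is proved by splitting at the first hitting time of level $\lambda$ by $|M|$ and applying the already-established $p=2$ bound via Chebyshev to the stopped post-hitting increment. Multiplying by $p\lambda^{p-1}$ and integrating in $\lambda\in(0,\infty)$ then converts the distributional bound into the $L^p$ estimate $\mathbb{E}(M^*_T)^p\le C_p\,\mathbb{E}\langle M\rangle_T^{p/2}$; a symmetric good-$\lambda$ inequality with the roles of $M^*_T$ and $\langle M\rangle_T^{1/2}$ interchanged delivers the reverse.

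The main obstacle I anticipate is the treatment of jumps in both the It\^o step and the good-$\lambda$ step, because the predictable quadratic variation $\langle M\rangle$ appearing in the statement differs from the optional quadratic variation $[M]=\langle M^c\rangle+\sum_{s\le\cdot}|\Delta M_s|^2$ by a purely discontinuous martingale. Controlling the tail $\sum_{s\le T}|\Delta M_s|^p$ by $\langle M\rangle_T^{p/2}$ when $p<2$ requires a Davis-type decomposition $M=M'+M''$ into a locally bounded martingale plus one of integrable variation, followed by separate estimates on each piece; in the purely continuous case these complications disappear and the It\^o-plus-Doob argument alone, combined with a single good-$\lambda$ step, already suffices.
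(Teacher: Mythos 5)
The paper does not prove this lemma at all: it is quoted as a known result from \cite[Page 37, Theorem 3.50]{PZ}, so your outline is necessarily a different route --- an actual proof where the paper supplies only a citation. As a sketch of the classical Burkholder--Davis--Gundy argument it has the right architecture (isometry plus Doob at $p=2$; It\^o's formula applied to a $C^2$ regularization of $|x|^p$ with absorption for $p>2$; a good-$\lambda$ inequality for $1\le p<2$), and each of these steps can be completed along the lines you indicate.

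One point you raise only in passing is in fact decisive and should be promoted from ``anticipated obstacle'' to a hypothesis on how the statement is read. For a general c\`adl\`ag square integrable martingale the two-sided inequality is \emph{false} if $\langle M\rangle$ is taken to be the predictable bracket: for the compensated Poisson process $M_t=N_t-t$ one has $\langle M\rangle_t=t$, while $\mathbb E\sup_{s\le t}|M_s|^p\asymp t$ as $t\downarrow0$ for $p\ge1$, so the lower bound fails for $p<2$ and the upper bound fails for $p>2$. The lemma is only true with the optional quadratic variation $[M]=\langle M^c\rangle+\sum_{s\le\cdot}|\Delta M_s|^2$, which is what the paper means by ``the quadratic variation''. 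Your good-$\lambda$ step silently uses this: the estimate on the post-hitting increment needs the pathwise bound $|\Delta M_\tau|\le[M]_T^{1/2}$ on the event $\{[M]_T^{1/2}\le\delta\lambda\}$, and this has no analogue for the predictable bracket (in the Poisson example $\langle M\rangle$ is continuous and says nothing about jump sizes). Two further steps are named rather than carried out and do require real work: the lower bound for $p>2$, which is usually obtained by writing $[M]_T=M_T^2-2\int_0^TM_{s-}\,dM_s$ and estimating the stochastic integral rather than by a ``dual It\^o identity''; and the good-$\lambda$ inequality itself, where one must introduce a second stopping time $\sigma=\inf\{t:[M]_t>\delta^2\lambda^2\}$ because the event $\{[M]_T\le\delta^2\lambda^2\}$ is not known at the hitting time of level $\lambda$. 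With those repairs your argument is the standard textbook proof of the cited result.
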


\begin{Definition}\label{Definition2.9}
Let $\mathbb{B}$ be a separable Banach space and let
$\mathscr{B}(\mathbb{B})$ be its Borel sets. A family of probability
measures $\mathbb{P}$ on $(\mathbb{B},\mathscr{B}(\mathbb{B}))$ is
{\em tight} if for any $\varepsilon>0$, there exists a compact set
$K_\varepsilon\subset \mathbb{B}$ such that $\Pi(K_\varepsilon)\ge
1-\varepsilon$ for all $\Pi\in\mathbb{P}$. A sequence of measures
$\{\Pi_n\}$ on $(\mathbb{B},\mathscr{B}(\mathbb{B}))$ is weakly
convergent to a measure $\Pi$ if for all continuous and bounded
functions $h$ on $\mathbb{B}$
\begin{equation*}
\lim\limits_{n\to\infty}\int_\mathbb{B}
h(x)\Pi_n(dx)=\int_\mathbb{B} h(x)\Pi(dx).
\end{equation*}
\end{Definition}

\begin{Lemma}[Jakubowski-Skorokhod Theorem \cite{JA}]\label{lemma1.4}
 Let $\mathscr{X}$ be a
topological space such that there exists a sequence $\{h_m\}$ of
continuous functions $h_m: \mathscr{X}\to\mathbb{R}$ that separate
points of \,$\mathscr{X}$. Denote by $\mathscr{S}$ the
$\sigma$-algebra generated by the maps $\{h_m\}$. Then

(1) every compact subset of \, $\mathscr{X}$ is metrizable.

(2) every Borel subset of a $\sigma$-compact set in $\mathscr{X}$
belongs to $\mathscr{S}$.

(3) every probability measure supported by a $\sigma$-compact set in
$\mathscr{X}$ has a unique Radon extension to the Borel
$\sigma$-algebra on $\mathscr{X}$.

(4) if \,$\Pi_m$ is a tight sequence of probability measures on
$(\mathscr{X},\mathscr{S})$,   there exist  a subsequence
$\Pi_{m_k}$ converging weakly to a probability measure $\Pi$, and a
probability space $(\Omega,\mathscr{F},P)$  with $\mathscr{X}$
valued Borel measurable random variables $X_{k}$ and $X$ such that,
$\Pi_{m_k}$ is the distribution of $X_k$,
and $X_k \to X$ a.s. on  $\Omega$. Moreover, the law of $X$ is a
Radon measure.
\end{Lemma}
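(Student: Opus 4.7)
The plan is to reduce everything to classical results on Polish spaces by using the separating sequence $\{h_m\}$ to embed $\mathscr{X}$ into $\mathbb{R}^{\mathbb{N}}$. Define the map $h:\mathscr{X}\to\mathbb{R}^{\mathbb{N}}$ by $h(x):=(h_m(x))_{m\ge 1}$, which is continuous (each coordinate is) and injective (since the $\{h_m\}$ separate points). Equip $\mathbb{R}^{\mathbb{N}}$ with the product topology, which is Polish, e.g.\ via $d(a,b)=\sum_m 2^{-m}\bigl(|a_m-b_m|\wedge 1\bigr)$.

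First I would establish parts (1)--(3). For (1), given a compact $K\subset\mathscr{X}$, the restriction $h|_K\colon K\to h(K)$ is a continuous bijection from a compact space onto a Hausdorff space, hence a homeomorphism, and $K$ inherits a metric by pullback. For (2), write a $\sigma$-compact set $A=\bigcup_n K_n$ with each $K_n$ compact; by (1) each $K_n$ is a compact metric space whose Borel $\sigma$-algebra is generated by the countable family $\{h_m|_{K_n}\}$ (a standard Stone--Weierstrass plus monotone class argument), so Borel subsets of $K_n$, and hence of $A$, belong to $\mathscr{S}$. For (3), a probability measure on $(A,\mathscr{S}|_A)$ transports via $h|_A$ to a Borel probability on the Borel set $h(A)\subset\mathbb{R}^{\mathbb{N}}$; regularity on the Polish space yields a unique Radon extension that pulls back to $\mathscr{X}$ because $h|_A$ is a Borel isomorphism onto $h(A)$ by Kuratowski's theorem on standard Borel spaces.

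For (4), the core of the theorem, I would transfer the tight sequence $\{\Pi_m\}$ to $\tilde\Pi_m:=\Pi_m\circ h^{-1}$ on $\mathbb{R}^{\mathbb{N}}$. Tightness is preserved since $h(K_\varepsilon)$ is compact in $\mathbb{R}^{\mathbb{N}}$ for every tightness-witnessing compact $K_\varepsilon\subset\mathscr{X}$. Prokhorov's theorem on the Polish space $\mathbb{R}^{\mathbb{N}}$ extracts a subsequence $\tilde\Pi_{m_k}\Rightarrow\tilde\Pi$, and the classical Skorokhod representation theorem then produces a common probability space $(\Omega,\mathscr{F},P)$ with $\mathbb{R}^{\mathbb{N}}$-valued random elements $Y_k\to Y$ almost surely of the prescribed laws. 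Since $\tilde\Pi$ concentrates on the $\sigma$-compact set $\bigcup_n h(K_{1/n})$, I set $X_k:=h^{-1}(Y_k)$ and $X:=h^{-1}(Y)$.

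The main obstacle is upgrading the a.s.\ coordinate-wise convergence $Y_k\to Y$ in $\mathbb{R}^{\mathbb{N}}$ to a.s.\ convergence in the possibly non-metrizable topology of $\mathscr{X}$. This is precisely where part (1) is essential: on each compact $h(K_{1/n})$ the inverse $h^{-1}$ is continuous, so once $Y_k(\omega)$ (for all large $k$) and $Y(\omega)$ lie in a fixed $h(K_{1/n})$, continuity transports the convergence back into $\mathscr{X}$. Tightness together with a diagonal argument arranges, for any $\varepsilon>0$, such a compact set in which all relevant paths eventually remain with probability at least $1-\varepsilon$, yielding a.s.\ convergence in $\mathscr{X}$; Borel measurability of $X_k, X$ as $\mathscr{X}$-valued maps again follows from Kuratowski's theorem, and the Radon property of the law of $X$ is immediate from tightness.
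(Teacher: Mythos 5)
The paper itself offers no proof of this lemma; it is quoted directly from Jakubowski \cite{JA}. So the only question is whether your blind argument is correct. The reduction via the injection $h=(h_m)_{m\ge1}:\mathscr{X}\to\mathbb{R}^{\mathbb{N}}$ and parts (1)--(3) are fine, and they do match the standard first half of Jakubowski's argument.

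The gap is in the last step of (4), which is the heart of the theorem. After invoking the classical Skorokhod representation theorem in $\mathbb{R}^{\mathbb{N}}$ you obtain \emph{some} random variables $Y_k\to Y$ a.s.\ with the prescribed laws, and you then assert that ``tightness together with a diagonal argument arranges, for any $\varepsilon>0$, such a compact set in which all relevant paths eventually remain with probability at least $1-\varepsilon$.'' This does not follow. Tightness gives compacts $K^{(j)}\subset\mathscr{X}$ with $P\bigl(Y_k\in h(K^{(j)})\bigr)\ge 1-2^{-j}$ for each \emph{fixed} $k$, but the union bound over infinitely many $k$ yields nothing, and nothing in the classical representation theorem prevents the exceptional events $\{Y_k\notin h(K^{(j)})\}$ from sweeping out all of $\Omega$ as $k$ varies. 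Since the topology that $h$ pulls back from $\mathbb{R}^{\mathbb{N}}$ is in general strictly weaker than that of $\mathscr{X}$, having $Y_k(\omega)\to Y(\omega)$ in $\mathbb{R}^{\mathbb{N}}$ is perfectly compatible with $h^{-1}(Y_k(\omega))$ leaving every compact subset of $\mathscr{X}$ infinitely often, i.e.\ with $X_k(\omega)\not\to X(\omega)$ in $\mathscr{X}$: the compact set of $\mathbb{R}^{\mathbb{N}}$ that traps an a.s.\ convergent sequence need not be of the form $h(K)$ with $K$ compact in $\mathscr{X}$, and $h^{-1}$ is not continuous on the $\sigma$-compact set $\bigcup_n h(K^{(n)})$. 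This is precisely the difficulty that forces Jakubowski to (i) pass to a further subsequence along which the restrictions of the laws to each $K^{(j)}$ converge separately, and (ii) construct the representation on $([0,1],\mathcal{B},\lambda)$ by an explicit nested-partition scheme adapted to the compacts, so that the probability that \emph{all} the $X_k$ and $X$ simultaneously lie in $K^{(j)}$ is controlled. Using the classical Skorokhod theorem as a black box cannot deliver that extra joint property, so your proof of (4) is incomplete at its central point.
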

\setcounter{equation}{0}
\section{The Galerkin approximation and a priori estimates}
\subsection{The Galerkin approximation}
\hspace{2mm}

On the probability space $(\Omega,\mathscr{F},P)$ with a given
$d$-dimensional Brownian motion $W$ and Poisson random measure
$\pi$. In order to solve \eqref{1.1}-\eqref{1.3}, we first consider
a suitable orthogonal system formed by a family of smooth functions
$w_n$ vanishing on $\partial D$. One can take the eigenfunctions of
the Dirichlet problem for the Laplacian operator:
\begin{equation*}
-\Delta w_n=\lambda_nw_n \ \ \mbox{on} \ \ D,\quad w_n|_{\partial D}=0.
\end{equation*}
Now, we consider a sequence of finite dimensional spaces
\begin{equation*}
X_n=\mbox{span}\{w_j\}^n_{j=1},\; n=1,2,\ldots.
\end{equation*}
We shall look for the sequences $(\r^n,u^n)$ satisfying the integral
equation:
\begin{align}\label{3.1}
\notag&\int_D \r^n du^n(t)\psi dx-\int_D \r_0 u_0\psi
dx-\int_0^t\int_D\left[\nu\Delta u^n-\r^n u^n\na
u^n)\right]\psi dxds\\
\notag&=\int_0^t\int_D\r^n f(s,u^n)\psi dxds+\int_0^t\int_D\r^n
g(s,u^n)\psi dxdW\\
&\quad+\int_0^t\int_{|z|_{Z}<1}\int_D\r^n
F\left( u^n(x,s-),z\right)\psi dx\tilde{\pi}(ds,dz)\\
\notag&\quad+\int_0^t\int_{|z|_{Z}\ge1}\int_D\r^n G\left(
u^n(x,s-),z\right)\psi dx\pi(ds,dz),
\end{align}
for all $t\in[0,T]$ and any function $\psi\in X_n$, together with
\begin{equation}\label{3.2}
(\rho^n)_t+(u^n\cdot\nabla)\rho^n=0 \ \ \mbox{in}\ \
Q_T=D\times[0,T],
\end{equation}
\begin{equation}\label{3.3}
u^n|_{t=0}=u_0^n,\; \rho^n|_{t=0}=\rho_0^n \ \ \mbox{in}\ \ D.
\end{equation}
Here we assume that
\begin{equation}\label{3.4}
u_0^n\in X^n,\; u_0^n\to u_0 \ \ \mbox{in} \ \ L^2(D),
\end{equation}
\begin{equation}\label{3.5}
0<m\le\r_0\le M,\; \rho_0^n\to \rho_0 \ \ \mbox{in} \ \ L^\infty (D)
\ \ \mbox{weakly star}.
\end{equation}
Our first step is to solve the transport equation \eqref{3.2} for
$\r^n$ with given $u^n$, it follows from the Proposition
\MyRoman{2}.1 in \cite{DP} that there exists a solution $\r^n$ of
\eqref{3.2} and \eqref{3.3} in $L^\infty(0,T; L^\infty(D))$.
Similarly, from
\begin{equation*}
\left(1/\rho^n\right)_t+u^n\cdot\nabla\left(1/\rho^n\right)=0,
\end{equation*}
we know that $1/\rho^n\in L^\infty(0,T; L^\infty(D))$. Then $\r^n$
has lower and upper bound, that is
\begin{equation}\label{3.7}
0<c\le\r^n\le C.
\end{equation}
Here $c$ and $C$ are independent of $n$ (only on $\r_0$). Next, we
show the existence of a solution $u^n\in \mathbb{D}([0,T];X_n)$ to
\eqref{3.1}. Here $\mathbb{D}([0,T];X_n)$ is the space of all
c\`{a}dl\`{a}g functions $f: [0,T]\rightarrow X_n$. We equip
$\mathbb{D}([0,T];X_n)$ with the Skorokhod topology (see
\cite{MM-1988}). To this end, fix a positive integer $n$, substitute
$\r^n$ into \eqref{3.1} and then look for a function $u^n$ in the
following form
\begin{equation}\label{3.10}
u^n=\sum_{k=1}^n \varphi_k^n(t)w_k(x).
\end{equation}
Choosing $\psi=w_1,w_2,\ldots,w_n$ in \eqref{3.1}, the coefficients
$\varphi_k^n$ satisfies the following stochastic ordinary
differential equations:
\begin{equation}\label{3.11}
\begin{split}
\sum_{k=1}^n&\left(\int_D \rho^n w_k
w_\ell\right)d\varphi_k^n(t)+\sum_{j,k=1}^n\int_D
\rho^n(w_j\varphi_j^n\cdot\nabla)w_k\varphi_k^nw_\ell dxdt\\
&-\int_D \rho^n f(t,\sum_{j=1}^nw_j\varphi_j^n)w_\ell dxdt+\nu\int_D
\sum_{j=1}^n\varphi_j^n(t)\nabla w_j\nabla w_\ell dxdt\\
&=\int_D \rho^n g(t,\sum^n_{j=1} w_j\varphi_j^n(t))w_\ell
dxdW+\int_D\int_{|z|_{Z}<1}\rho^nF\left(
\sum_{j=1}^nw_j\varphi_j^n(t),z\right)w_\ell
\tilde{\pi}(dt,dz)dx\\
&\quad+\int_D\int_{|z|_{Z}\ge1}\rho^nG\left(
\sum_{j=1}^nw_j\varphi_j^n(t),z\right)w_\ell \pi(dt,dz)dx.
\end{split}
\end{equation}
Since the matrix $\left(\int_D \rho^n w_k w_\ell dx\right)$ is
non-degenerate, by virtue of \eqref{3.7},  \eqref{3.11} can be
reformulated as the ordinary form:
\begin{equation}\label{3.12}
\begin{split}
d\varphi_\ell^n+\tilde{F}_\ell(t,\varphi_1^n,\ldots,\varphi_n^n)dt&=\tilde{G}_\ell^n(t,\varphi_1^n,\ldots,\varphi_n^n)dW
+\int_{|z|_{Z}\ge1}\tilde{\Psi}_\ell(\varphi_1^n,\ldots,\varphi_n^n,z)
\pi(dt,dz)\\
&\quad+\int_{|z|_{Z}<1}\tilde{\Phi}_\ell(\varphi_1^n,\ldots,\varphi_n^n,z)\tilde{\pi}(dt,dz),
\end{split}
\end{equation}
with the initial data $\varphi^n_\ell(0)=\varphi^n_{\ell,0}$, where
$\varphi^n_{\ell,0}$ are the coefficients of $u_0^n=\sum_{k=1}^n
\varphi_{k,0}^n w_k$. In view of the assumptions (A) and (B),
$\tilde{F},\tilde{G},\int_{|z|_{Z}\ge1}\tilde{\Psi}\pi(dt,dz)$ and
$\int_{|z|_{Z}<1}\tilde{\Phi}\tilde{\pi}(dt,dz)$ satisfy the Lipschitz and growth conditions.
According to the existence theory \cite[pp.367, Theorem 6.2.3,
Chaper 6]{AD} for the stochastic ordinary differential equations
with jumps, there exists a function
$\varphi_n=(\varphi_1^n,\ldots,\varphi_n^n)$ satisfying equation
\eqref{3.12} and the initial data for a.e. $t\in[0,T_n]$. And then
$u^n$ defined in \eqref{3.10} solves \eqref{3.11} for a.e.
$t\in[0,T_n]$. Next we want to show that we can find a uniform time
interval of existence for all $n$. This will follow from the a
priori estimates established in the next subsection.


\subsection{A priori estimates} \hspace{2mm}

Now, we want to get the needed a priori estimates. To this end,
taking $\psi=w_k$ in \eqref{3.1}, multiplying the result by
$\varphi_k^n(t)$ and then summing over $k=1,2,\ldots,n$, we have
\begin{align}\label{4.1}
\notag\int_D (u^n\rho^ndu^n(t))dx +&\int_D \rho^n u^n
(u^n\cdot\nabla) u^n
dxdt+\nu \int_D \nabla u^n\cdot\nabla u^n dxdt\\
&=\int_D \rho^nf(t,u^n)u^ndxdt+\int_D
\rho^ng(t,u^n)u^ndxdW\\
&\notag\quad+\int_D\int_{|z|_{Z}<1}\rho^n F\left(
u^n(x,t-),z\right)u^n\tilde{\pi}(dt,dz)dx\\
&\notag\quad+\int_D\int_{|z|_{Z}>1}\rho^n G\left(
u^n(x,t-),z\right)u^n \pi(dt,dz)dx.
\end{align}

First, we introduce the following stopping time:
\begin{equation}\label{4.2}
\tau_N= \left\{\begin{array}{lll}
\inf\{t>0:\|\sqrt{\rho^n}u^n(t)\|_{L^2(D)}\ge N\}, &\ \ \mbox{if}\ \
\{\omega\in\Omega:\|\sqrt{\rho^n}u^n(t)\|_{L^2(D)}\ge N
\}\neq\emptyset,\\
T, &\ \ \mbox{if}\ \
\{\omega\in\Omega:\|\sqrt{\rho^n}u^n(t)\|_{L^2(D)}\ge N
\}=\emptyset.
\end{array}\right.
\end{equation}

Define the function $\Phi=\int_D\r^nu^n\cdot u^ndx$. Let
$q=\r^nu^n$, then $\Phi=\int_D {|q|^2\over\r^n} dx$. Note that
$$
\nabla_q \Phi(\r^n,q)=\int_D {2q\over\r^n}dx=\int_D
{2\r^nu^n\over\r^n}dx=\int_D2u^ndx,\; \nabla^2_q
\Phi(\r^n,q)=\int_D{2\over\r^n} \mathbb I \ dx,
$$
and
$$\partial_{\r^n} \Phi(\r^n,q)=\int_D|u^n|^2dx,$$
where $\mathbb I$ is the identity matrix.
 Applying It\^{o}'s formula in Lemma \ref{Ito} to the above function
$\Phi$, from the first equation in \eqref{1.1},
 one deduces that
\begin{align}\label{4.3}
\notag d\int_D|\sqrt{\rho^n}u^n|^2dx&=\int_D u^n u^n {\partial
\rho^n(s)\over \partial s}dxds+2\nu\int_D
u^n\Delta u^ndxds-2\int_D \rho^n u^n(u^n\cdot\nabla)u^ndxds\\
\notag&\quad+2\int_D \rho^n
u^n\left[f(s,u^n)ds+g(s,u^n)dW\right]dx+\int_D
|\sqrt{\rho^n}g(s,u^n)|^2dxds\\
\notag&\quad+\int_D\int_{|z|_Z<1}\left[2\rho^nu^n F\left(
u^n(x,s-),z\right)+\rho^n F^2\left(
u^n(x,s-),z\right)\right]\tilde{\pi}(ds,dz)dx\\
&\quad+\int_D\int_{|z|_Z\ge1}\left[2\rho^nu^n G\left(
u^n(x,s-),z\right)+\r^nG^2\left(
u^n(x,s-),z\right)\right]\tilde{\pi}(ds,dz)dx\\
\notag&\quad+\int_D\int_{|z|_Z<1}|\sqrt{\rho^n}F\left(
u^n(x,s-),z\right)|^2\mu(dz)dx\\
\notag&\quad+\int_D\int_{|z|_Z\ge1}|\sqrt{\rho^n}G\left(
u^n(x,s-),z\right)|^2\mu(dz)dx\\
\notag&\quad+2\int_D\int_{|z|_Z\ge1}\rho^nu^nG\left(
u^n(x,s-),z\right)\mu(dz)dx,
\end{align}
where $s\in[0,t\wedge\tau_N]$, $t\in[0,T_n]$,
$t\wedge\tau_N:=\min\{t,\tau_N\}$. From the second and third
equations in \eqref{1.1}, we can infer that
\begin{equation}\label{4.4}
\begin{split}
0=\int_D
\Dv(u^nu^n\rho^nu^n)dx&=\int_D[u^nu^n\Dv(\rho^nu^n)+\rho^nu^n\nabla(u^nu^n)]dx\\
&=\int_D
[u^nu^n(u^n\cdot\nabla)\rho^n+2\rho^nu^n(u^n\cdot\nabla)u^n]dx,
\end{split}
\end{equation}
where the first equality is due to the condition that $u^n$ vanishes on $(0,T)\times\partial D$. It follows from the
second equation in \eqref{1.1} that
\begin{equation}\label{4.5}
\begin{split}
\int_D u^nu^n{\partial \rho^n(s)\over \partial s} dx=-\int_D u^nu^n
(u^n\cdot\nabla) \rho^ndx=2\int_D \rho^nu^n(u^n\cdot\nabla)u^ndx.
\end{split}
\end{equation}
Substituting \eqref{4.5} into \eqref{4.3}, for all
$s\in[0,t\wedge\tau_N]$, it holds that
\begin{align}\label{4.6}
\notag&\norm{\sqrt{\rho^n}u^n(s)}^2_{L^2(D)}+2\nu\int_0^s
\norm{\nabla
u^n(r)}^2_{L^2(D)}dr\\
\notag&\;\le \norm{\sqrt{\rho_0^n}u_0^n}^2_{L^2(D)}+\int_0^s
2|\langle u^n,\rho^n
f(r,u^n)\rangle|dr+\int_0^s\norm{\sqrt{\rho^n}g(r,u^n)}^2_{L^2(D)}dr\\
&\notag\quad+2\left|\int_0^s \langle u^n,\rho^ng(r,u^n)\rangle
dW\right|+2\left|\int_0^s\int_{|z|_Z\ge1}\langle
u^n,\rho^nG\left(u^n(x,r-),z\right)\rangle\mu(dz)dr\right|\\
&\quad+\int_0^s\int_{|z|_Z<1}\left(2\langle u^n,\rho^n F\left(
u^n(x,r-),z\right)\rangle+\norm{\sqrt{\r^n}F}^2_{L^2(D)}\right)\tilde{\pi}(dr,dz)\\
&\notag\quad +\int_0^s\int_{|z|_Z\ge1}\left(2\langle u^n,
\rho^nG\left(
u^n(x,r-),z\right)\rangle+\norm{\sqrt{\r^n}G}^2_{L^2(D)}\right)\tilde{\pi}(dr,dz)\\
&\notag\quad+\int_0^s\int_D\int_{|z|_Z<1}|\sqrt{\rho^n}F\left(
u^n(x,r-),z\right)|^2\mu(dz)dxdr\\
&\notag\quad+\int_0^s\int_D\int_{|z|_Z\ge1}|\sqrt{\rho^n}G\left(
u^n(x,r-),z\right)|^2\mu(dz)dxdr.
\end{align}
Here $\langle \cdot , \cdot\rangle$ denotes the inner product.
Taking supremum on both sides of \eqref{4.6} over the interval
$[0,t\wedge\tau_N]$, and then taking the mathematical expectation,
we obtain
\begin{align}\label{4.7}
\notag \mathbb E&\sup_{0\le s\le
t\wedge\tau_N}\norm{\sqrt{\rho^n}u^n(s)}^2_{L^2(D)}+2\nu\mathbb
E\int_0^{t\wedge\tau_N} \norm{\nabla
u^n(r)}^2_{L^2(D)}dr\\
\notag&\le
\mathbb E\norm{\sqrt{\rho_0^n}u_0^n}^2_{L^2(D)}+ \mathbb E\int_0^{t\wedge\tau_N}
2|\langle u^n,\rho^n
f(r,u^n)\rangle|dr+ \mathbb E\int_0^{t\wedge\tau_N}\norm{\sqrt{\rho^n}g(r,u^n)}^2_{L^2(D)}dr\\
\notag&\quad+2 \mathbb E\sup_{0\le s\le t\wedge\tau_N}\left|\int_0^s \langle
u^n,\rho^ng(r,u^n)\rangle
dW\right|+2 \mathbb E\left|\int_0^{t\wedge\tau_N}\int_{|z|_Z\ge1}\langle
u^n,G\left(u^n(x,r-),z\right)\rangle\mu(dz)dr\right|\\
&\quad+ \mathbb E\sup_{0\le s\le
t\wedge\tau_N}\int_0^s\int_{|z|_Z<1}\left(2\langle u^n,\rho^n
F\left(
u^n(x,r-),z\right)\rangle+\norm{\sqrt{\r^n}F}^2_{L^2(D)}\right)\tilde{\pi}(dr,dz)\\
\notag&\quad+ \mathbb E\sup_{0\le s\le
t\wedge\tau_N}\int_0^s\int_{|z|_Z\ge1}\left(2\langle u^n,
\rho^nG\left(
u^n(x,r-),z\right)\rangle+\norm{\sqrt{\r^n}G}^2_{L^2(D)}\right)\tilde{\pi}(dr,dz)\\
\notag&\quad+ \mathbb E\int_0^{t\wedge\tau_N}\int_D\int_{|z|_Z<1}|\sqrt{\rho^n}F\left(
u^n(x,r-),z\right)|^2\mu(dz)dxdr\\
\notag&\quad+ \mathbb E\int_0^{t\wedge\tau_N}\int_D\int_{|z|_Z\ge1}|\sqrt{\rho^n}G\left(
u^n(x,r-),z\right)|^2\mu(dz)dxdr\\
\notag&:=I_0+I_1+I_2+I_3+I_4+I_5+I_6+I_7+I_8.
\end{align}

Now, we shall estimate each term in the right-hand side of
\eqref{4.7}. First, for the term $I_1$, by Young's inequality and
the hypothesis on $f$, it holds that
\begin{equation}\label{4.8}
\begin{split}
 I_1&\le
\varepsilon \mathbb E\int_0^{t\wedge\tau_N}\norm{\sqrt{\rho^n}u^n(s)}^2_{L^2(D)}ds+C_\varepsilon \mathbb E\int_0^{t\wedge\tau_N}\norm{\sqrt{\rho^n(s)}f(s,u^n)}^2_{L^2(D)}ds\\
&\le
C\mathbb E\int_0^{t\wedge\tau_N}\norm{\sqrt{\rho^n}u^n(s)}^2_{L^2(D)}ds+C.
\end{split}
\end{equation}
For the term $I_2$, by the assumption on $g$, the H\"{o}lder's
inequality yields
\begin{equation}\label{4.9}
I_2\le C\mathbb E\int_0^{t\wedge\tau_N}
\norm{\sqrt{\rho^n(s)}u^n(s)}^2_{L^2(D)}ds+C.
\end{equation}

Next, we shall estimate the term $I_3$, the hypothesis on $g$ and
Burkholder-Davis-Gundy inequality imply
\begin{equation}\label{4.10}
\begin{split}
I_3&\le C
\mathbb E\left[\int_0^{t\wedge\tau_N}\langle\rho^ng(s,u^n),u^n\rangle^2ds\right]^{1\over2}\\
&\le C
\mathbb E\left[\int_0^{t\wedge\tau_N}\norm{\rho^n(s)}_{L^\infty(D)}\left(1+\norm{u^n(s)}^2_{L^2(D)}\right)\norm{\sqrt{\rho^n}u^n(s)}^2_{L^2(D)}ds\right]^{1\over2}\\
&\le C \mathbb E\sup_{0\le s\le
t\wedge\tau_N}\norm{\sqrt{\rho^n}u^n(s)}_{L^2(D)}\left(\int_0^{t\wedge\tau_N}\norm{\rho^n(s)}_{L^\infty(D)}\left(1+\norm{u^n(s)}^2_{L^2(D)}\right)ds\right)^{1\over2}\\
&\le \varepsilon \mathbb E\sup_{0\le s\le
t\wedge\tau_N}\norm{\sqrt{\rho^n}u^n(s)}^2_{L^2(D)}+C\mathbb E\int_0^{t\wedge\tau_N}\left(1+\norm{\sqrt{\rho^n}u^n(s)}^2_{L^2(D)}\right)ds.
\end{split}
\end{equation}
For the term $I_4$, it follows from the assumption on $G$ and
H\"{o}lder's inequality that
\begin{align}\label{4.11}
\notag I_4&\le \mathbb E\int_0^{t\wedge\tau_N}\left|\int_{|z|_Z\ge1}\langle
\sqrt{\rho^n}u^n,\sqrt{\rho^n}G\left(u^n(x,r-),z\right)\rangle\mu(dz)dr\right|\\
\notag&\le
\mathbb E\int_0^{t\wedge\tau_N}\norm{\sqrt{\rho^n}u^n}^2_{L^2(D)}ds+ \mathbb E\norm{\rho^n}_{L^\infty(D)}\int_0^{t\wedge\tau_N}\left|\int_{|z|_Z\ge1}
\norm{G\left(u^n(x,s-),z\right)}_{L^2(D)}\mu(dz)\right|^2ds\\
\notag&\le
\mathbb E\int_0^{t\wedge\tau_N}\norm{\sqrt{\rho^n}u^n}^2_{L^2(D)}ds+C_{\mu}
\mathbb E\|\rho^n\|_{L^\infty(D)}\int_0^{t\wedge\tau_N}
\int_{|z|_Z\ge1}\norm{G\left(u^n(x,s-),z\right)}^2_{L^2(D)}\mu(dz)ds\\
\notag&\le
\mathbb E\int_0^{t\wedge\tau_N}\norm{\sqrt{\rho^n}u^n(s)}^2_{L^2(D)}ds+C_{\mu} \mathbb E\norm{\rho^n}_{L^\infty(D)}\int_0^{t\wedge\tau_N}\left(1+\norm{u^n(s)}^2_{L^2(D)}\right)ds\\
&\le
(C+1) \mathbb E\int_0^{t\wedge\tau_N}\norm{\sqrt{\rho^n}u^n(s)}^2_{L^2(D)}ds+C.
\end{align}
Here $C_{\mu}=\int_{|z|_Z\ge1}\mu(dz)<\infty$. For the term $I_5$,
in view of the hypothesis on $F$, using the Burkholder-Davis-Gundy
inequality, H\"{o}lder's and Young's inequality, we have
\begin{align}\label{4.12}
\notag I_{5,1}&:=2\mathbb E\sup_{0\le s\le
t\wedge\tau_N}\int_0^s\int_{|z|_Z<1}\langle u^n,\rho^n F\left(
u^n(x,r-),z\right)\rangle\tilde{\pi}(dr,dz)\\
\notag &\le 2\mathbb
E\left[\int_0^{t\wedge\tau_N}\int_{|z|<1}\langle
u^n,\rho^nF(u^n(x,s-),z)\rangle^2\mu(dz)ds
\right]^{1\over2}\\
\notag&\le C\mathbb E\left(\sup_{0\le s\le
t\wedge\tau_N}\norm{\sqrt{\rho^n}u^n(s)}^2_{L^2(D)}\int_0^{t\wedge\tau_N}\int_{|z|_Z<1}\norm{\rho^n}_{L^\infty(D)}
\norm{F(u^n(x,s-),z)}^2_{L^2(D)}\mu(dz)ds\right)^{1\over2}\\
&\le C\mathbb E\left[\sup_{0\le s\le
t\wedge\tau_N}\norm{\sqrt{\rho^n}u^n(s)}_{L^2(D)}\left(\int_0^{t\wedge\tau_N}\left(1+\norm{\sqrt{\rho^n}u^n(s)}^2_{L^2(D)}\right)ds\right)^{1\over2}\right]\\
\notag&\le \varepsilon \mathbb E\sup_{0\le s\le
t\wedge\tau_N}\norm{\sqrt{\rho^n}u^n(s)}^2_{L^2(D)}+C \mathbb E\int_0^{t\wedge\tau_N}\left(1+\norm{\sqrt{\rho^n}u^n(s)}^2_{L^2(D)}\right)ds,
\end{align}
on the other hand, we can infer that
\begin{align}\label{4.12a}
\notag I_{5,2}&:=\mathbb E\sup_{0\le s\le
t\wedge\tau_N}\int_0^s\int_{|z|_Z<1}\norm{\sqrt{\r^n}F}^2_{L^2(D)}\tilde{\pi}(dr,dz)\\
\notag&\le C
\mathbb E\left[\int_0^{t\wedge\tau_N}\int_{|z|_Z<1}\norm{\sqrt{\r^n}F}^4_{L^2(D)}\mu(dz)ds
\right]^{1\over2}\\
&\le C
\mathbb E\left[\int_0^{t\wedge\tau_N}\norm{\r^n}^2_{L^\infty(D)}\int_{|z|_Z<1}\norm{F}^4_{L^2(D)}\mu(dz)ds
\right]^{1\over2}\\
&\notag\le \varepsilon \mathbb E\sup_{0\le s\le
t\wedge\tau_N}\norm{\sqrt{\rho^n}u^n(s)}^2_{L^2(D)}+C\mathbb E\int_0^{t\wedge\tau_N}\left(1+\norm{\sqrt{\rho^n}u^n(s)}^2_{L^2(D)}\right)ds,
\end{align}
then
\begin{equation}\label{4.12b}
I_5\le \varepsilon \mathbb E\sup_{0\le s\le
t\wedge\tau_N}\norm{\sqrt{\rho^n}u^n(s)}^2_{L^2(D)}+C\mathbb E\int_0^{t\wedge\tau_N}\left(1+\norm{\sqrt{\rho^n}u^n(s)}^2_{L^2(D)}\right)ds.
\end{equation}

For the term $I_6$, similarly to $I_5$, by the assumption on $G$,
one has
\begin{equation}\label{4.13}
I_6\le\varepsilon \mathbb E\sup_{0\le s\le
t\wedge\tau_N}\norm{\sqrt{\rho^n}u^n(s)}^2_{L^2(D)}+C\mathbb E\int_0^{t\wedge\tau_N}\left(1+\norm{\sqrt{\rho^n}u^n(s)}^2_{L^2(D)}\right)ds.
\end{equation}

Finally, we shall estimate the last two terms. For the term $I_7$,
in virtue of the assumption (B), using H\"{o}lder's inequality, we
can infer that
\begin{equation}\label{4.14}
\begin{split}
I_7&\le C\mathbb E \int_0^{t\wedge\tau_N}\norm{\rho^n(s)}_{L^\infty(D)}
\int_{|z|_Z<1}\norm{F(u^n(x,s-),z)}^2_{L^2(D)}\mu(dz)ds\\
&\le C\mathbb E\int_0^{t\wedge\tau_N}
\norm{\sqrt{\rho^n}u^n(s)}^2_{L^2(D)}ds+C.
\end{split}
\end{equation}
Similarly to $I_7$, in view of the hypothesis on $G$, one deduces
that
\begin{equation}\label{4.15}
I_8\le  C\mathbb E\int_0^{t\wedge\tau_N}
\norm{\sqrt{\rho^n}u^n(s)}^2_{L^2(D)}ds+C.
\end{equation}
Substituting  \eqref{4.8}-\eqref{4.15} into \eqref{4.7}, for
sufficiently small $\varepsilon>0$, it holds that
\begin{equation}\label{4.16}
\begin{split}
\mathbb E&\sup_{0\le s\le
t\wedge\tau_N}\norm{\sqrt{\rho^n}u^n(s)}^2_{L^2(D)}+2\nu
\mathbb E\int_0^{t\wedge\tau_N}\norm{\nabla u^n(s)}^2_{L^2(D)}ds\\
&\le
\mathbb E\norm{\sqrt{\rho_0^n}u_0^n}^2_{L^2(D)}+C\mathbb E\int_0^{t\wedge\tau_N}\left(1+\norm{\sqrt{\rho^n}u^n(s)}^2_{L^2(D)}\right)ds.
\end{split}
\end{equation}
By the Gronwall inequality, we have
\begin{equation}\label{4.17}
\mathbb E\sup_{0\le s\le
t\wedge\tau_N}\norm{\sqrt{\rho^n}u^n(s)}^2_{L^2(D)}+2\nu
\mathbb E\int_0^{t\wedge\tau_N}\norm{\nabla u^n(s)}^2_{L^2(D)}ds\le C.
\end{equation}
Note that by the property of stopping time, we have
$t\wedge\tau_N\to t$ as $N\to\infty$. Then letting $N\to\infty$ in
\eqref{4.17}, for any $t\in[0,T_n]$, we obtain
\begin{equation}\label{4.18}
\mathbb E\sup_{0\le s\le t}\norm{\sqrt{\rho^n}u^n(s)}^2_{L^2(D)}+2\nu
\mathbb E\int_0^t\norm{\nabla u^n(s)}^2_{L^2(D)}ds\le C.
\end{equation}
Since the constant $C$ is independent of $n$, then $T_n=T$.

Applying It\^{o}'s formula to \eqref{3.1} when $p\ge2$, integrating
over $[0,s], s\in[0,t\wedge\tau_N]$, one has
\begin{equation}\label{4.19}
\begin{split}
&\norm{\sqrt{\rho^n}u^n(s)}^p_{L^2(D)}+p\nu\int_0^s\norm{\sqrt{\rho^n}u^n(r)}^{p-2}_{L^2(D)}\norm{\nabla
u^n(r)}^2_{L^2(D)}dr\\
&\quad=p\int_0^s\norm{\sqrt{\rho^n}u^n(r)}^{p-2}_{L^2(D)}\langle u^n,\rho^ng(r,u^n)\rangle dW\\
&\quad\quad+{p\over
2}\int_0^s\norm{\sqrt{\rho^n}u^n(r)}^{p-2}_{L^2(D)}\left[2\langle
u^n,\rho^nf(r,u^n)\rangle+\norm{\sqrt{\rho^n}g(r,u^n)}^2_{L^2(D)}\right]dr\\
&\quad\quad+{p\over2}\left({p\over2}-1\right)\int_0^s\norm{\sqrt{\rho^n}u^n(r)}^{p-4}_{L^2(D)}\langle
u^n,\rho^ng(r,u^n)\rangle^2dr+\sum_{i=1}^3J_{i,s},
\end{split}
\end{equation}
where
\begin{equation}\label{4.20}
J_{1,s}:=\int_0^s
\int_{|z|_Z\ge1}\left\{\norm{\sqrt{\rho^n}u^n(r)+\sqrt{\rho^n}G(u^n(r-),z)}_{L^2(D)}^p-\norm{\sqrt{\rho^n}u^n(r)}^p_{L^2(D)}\right\}
\pi(dz,dr),
\end{equation}
\begin{equation}\label{4.21}
J_{2,s}:=\int_0^s
\int_{|z|_Z<1}\left\{\norm{\sqrt{\rho^n}u^n(r)+\sqrt{\rho^n}F(u^n(r-),z)}_{L^2(D)}^p-\norm{\sqrt{\rho^n}u^n(r)}^p_{L^2(D)}\right\}
\tilde{\pi}(dz,dr),
\end{equation}
and
\begin{equation}\label{4.22}
\begin{split}
J_{3,s}:=\int_0^s
&\int_{|z|_Z<1}\bigg\{\norm{\sqrt{\rho^n}u^n(r)+\sqrt{\rho^n}F(u^n(r-),z)}_{L^2(D)}^p-\norm{\sqrt{\rho^n}u^n(r)}^p_{L^2(D)}\\
&-p\norm{\sqrt{\rho^n}u^n(r)}_{L^2(D)}^{p-2}\langle\sqrt{\rho^n}u^n,F(u^n(r-),z)\rangle\bigg\}\mu(dz)dr.
\end{split}
\end{equation}
Now, taking supremum up to time $t\wedge\tau_N$ and taking
mathematical expectation in both sides of \eqref{4.19}, and then we
shall estimate each term of the resulting equation. For the first
term, by the Burkholder-Davis-Gundy inequality and H\"{o}lder's
inequality, in virtue of assumption (A) and \eqref{3.7}, we have
\begin{equation}\label{4.22a}
\begin{split}
\mathbb E&\sup_{0\le s\le
t\wedge\tau_N}\left|\int_0^s\norm{\sqrt{\rho^n}u^n(r)}^{p-2}_{L^2(D)}\langle
u^n,\rho^ng(r,u^n)\rangle dW \right|\\
&\leq  C \mathbb E \left[\sup_{0\le s\le
t\wedge\tau_N}\norm{\sqrt{\rho^n}u^n(s)}^{p-2}_{L^2(D)}\left(\int_0^{t\wedge\tau_N}
\langle
u^n,\rho^ng(s,u^n)\rangle^2dt\right)^{1\over2}\right]\\
&\leq C\mathbb E\left[\sup_{0\le s\le
t\wedge\tau_N}\norm{\sqrt{\rho^n}u^n(s)}^{p-1}_{L^2(D)}
\left(\int_0^{t\wedge\tau_N}\norm{\sqrt{\rho^n}g(s,u^n)}^2_{L^2(D)}dt\right)^{1\over2}\right]\\
&\le \varepsilon \mathbb E\sup_{0\le s\le
t\wedge\tau_N}\norm{\sqrt{\rho^n}u^n(s)}^p_{L^2(D)}+C
\mathbb E\left(\int_0^{t\wedge\tau_N}
\left(1+\norm{\sqrt{\rho^n}u^n(s)}^{p}_{L^2(D)}\right)ds\right).
\end{split}
\end{equation}
For the second, third and fourth terms, similar to the first term,
the H\"{o}lder inequality, \eqref{3.7} and assumption (A) yield
\begin{equation}
\begin{split}
\mathbb E&\int_0^{t\wedge\tau_N}\norm{\sqrt{\rho^n}u^n(s)}^{p-2}_{L^2(D)}\langle
u^n,\rho^nf(s,u^n)\rangle ds\\
&\leq C
\mathbb E\int_0^{t\wedge\tau_N}\norm{\sqrt{\rho^n}u^n(s)}^{p-2}_{L^2(D)}\left(1+\norm{\sqrt{\rho^n}u^n(s)}^2_{L^2(D)}\right)ds\\
&\leq C
\mathbb E\int_0^{t\wedge\tau_N}\left(1+\norm{\sqrt{\rho^n}u^n(s)}^p_{L^2(D)}\right)ds,
\end{split}
\end{equation}
and
\begin{equation}
\mathbb E\int_0^{t\wedge\tau_N}\norm{\sqrt{\rho^n}u^n(s)}^{p-2}_{L^2(D)}\norm{\sqrt{\rho^n}g(s,u^n)}^2_{L^2(D)}ds
\leq C
\mathbb E\int_0^{t\wedge\tau_N}\left(1+\norm{\sqrt{\rho^n}u^n(s)}^p_{L^2(D)}\right)ds,
\end{equation}
and
\begin{equation}
\begin{split}
\mathbb E&\int_0^{t\wedge\tau_N}\norm{\sqrt{\rho^n}u^n(s)}^{p-4}_{L^2(D)}\langle
u^n,\rho^ng(s,u^n)\rangle^2 ds\\
&\leq C
\mathbb E\int_0^{t\wedge\tau_N}\norm{\sqrt{\rho^n}u^n(s)}^{p-2}_{L^2(D)}\left(1+\norm{\sqrt{\rho^n}u^n(s)}^2_{L^2(D)}\right)ds\\
&\leq C
\mathbb E\int_0^{t\wedge\tau_N}\left(1+\norm{\sqrt{\rho^n}u^n(s)}^p_{L^2(D)}\right)ds.
\end{split}
\end{equation}
For the term $J_{1,s}$, it follows from the inequality $(a+b)^p\le
2^{p-1}(a^p+b^p)$ for all $p\ge 1$ and $a,b\ge 0$, \eqref{3.7} and
assumption (B) that
\begin{equation}\label{4.23}
\begin{split}
\mathbb E\sup_{0\le s\le t\wedge\tau_N}|J_{1,s}|&\le \mathbb E\sup_{0\le s\le
t\wedge{\tau_N}}\int_0^s\int_{|z|_Z\ge1}\left\{\norm{\sqrt{\rho^n}u^n(r)+\sqrt{\rho^n}G(u^n(r-),z)}_{L^2(D)}^p\right\}
\pi(dz,dr)\\
&\le
2^{p-1}\mathbb E\int_0^{t\wedge{\tau_N}}\int_{|z|_Z\ge1}\left\{\norm{\sqrt{\rho^n}u^n(s)}^p_{L^2(D)}+\norm{\sqrt{\rho^n}G(u^n(s-),z)}_{L^2(D)}^p\right\}
\mu(dz)ds\\
&\le
C(p)\left(\mathbb E(t\wedge{\tau_N})+\int_0^{t\wedge{\tau_N}}\mathbb E\norm{\sqrt{\rho^n}u^n(s)}^p_{L^2(D)}ds\right).
\end{split}
\end{equation}
On the other hand, the terms $J_{2,s}$ and $J_{3,s}$ can be rewrite
as
\begin{align*}
J_{2,s}+J_{3,s}&=\int_0^s
\int_{|z|_Z<1}\bigg\{\norm{\sqrt{\rho^n}u^n(r)+\sqrt{\rho^n}F(u^n(r-),z)}_{L^2(D)}^p-\norm{\sqrt{\rho^n}u^n(r)}^p_{L^2(D)}\\
&\quad-p\norm{\sqrt{\rho^n}u^n(r)}_{L^2(D)}^{p-2}\langle\sqrt{\rho^n}u^n,F(u^n(r-),z)\rangle\bigg\}
\pi (dz,dr)\\
&\quad+\int_0^s
\int_{|z|_Z<1}p\norm{\sqrt{\rho^n}u^n(r)}_{L^2(D)}^{p-2}\langle\sqrt{\rho^n}u^n,F(u^n(r-),z)\rangle\tilde{\pi}
(dz,dr)\\
&:=J_{4,s}+J_{5,s}.
\end{align*}
For all $a, b\in H$ and $p\ge2$, from Taylor's formula, it holds
that
\begin{equation*}
\left||a+b|_H^p-|a|_H^p-p|a|_H^{p-2}\langle a, b\rangle\right|\le
C(p)\left(|a|_H^{p-2}|b|_H^2+|b|_H^p\right).
\end{equation*}
From this above inequality, one has
\begin{equation*}
\begin{split}
\mathbb E\sup_{0\le s\le t\wedge\tau_N}|J_{4,s}|&\le C(P)\mathbb E\sup_{0\le s\le
t\wedge\tau_N}\int_0^s\int_{|z|_Z<1}\bigg\{\norm{\sqrt{\rho^n}u^n(r)}^{(p-2)}_{L^2(D)}\norm{\sqrt{\rho^n}F(u^n(r-),z)}^2_{L^2(D)}\\
&\quad+\norm{\sqrt{\rho^n}F(u^n(r-),z)}^p_{L^2(D)}\bigg\} \pi(dz,dr)\\
&\le C(p)\mathbb E\left(\sup_{0\le s\le
t\wedge\tau_N}\norm{\sqrt{\rho^n}u^n(s)}^{(p-2)}_{L^2(D)}\int_0^{t\wedge\tau_N}\int_{|z|_Z<1}\norm{\sqrt{\rho^n}F(u^n(s-),z)}^2_{L^2(D)}\mu(dz)ds\right)\\
&\quad+C(p)\mathbb E\int_0^{t\wedge\tau_N}\int_{|z|_Z<1}\norm{\sqrt{\rho^n}F(u^n(s-),z)}^p_{L^2(D)}\mu(dz)ds.
\end{split}
\end{equation*}
It follows from the assumption (B) and H\"{o}lder's inequality that
\begin{equation}\label{4.24}
\begin{split}
\mathbb E\sup_{0\le s\le t\wedge\tau_N}|J_{4,s}|&\le {1\over 8} \mathbb E\sup_{0\le
s\le
t\wedge\tau_N}\norm{\sqrt{\rho^n}u^n(s)}^p_{L^2(D)}+C(p)\mathbb E\left[\int_0^{t\wedge\tau_N}\left(1+\norm{\sqrt{\rho^n}u^n(s)}^2_{L^2(D)}\right)ds\right]^{p\over
2}\\
&\quad+C(p)\mathbb E\int_0^{t\wedge\tau_N}\left(1+\norm{\sqrt{\rho^n}u^n(s)}^p_{L^2(D)}\right)ds\\
&\le {1\over 8} \mathbb E\sup_{0\le s\le
t\wedge\tau_N}\norm{\sqrt{\rho^n}u^n(s)}^p_{L^2(D)}+C(p,T)\mathbb E\int_0^{t\wedge\tau_N}\left(1+\norm{\sqrt{\rho^n}u^n(s)}^p_{L^2(D)}\right)ds.
\end{split}
\end{equation}
For the term $J_{5,s}$, by the Burkholder-Davis-Gundy inequality and
Young's inequality, we have
\begin{equation}\label{4.25}
\begin{split}
\mathbb E\sup_{0\le s\le t\wedge\tau_N}|J_{5,s}|&\le
C(p)\mathbb E\Bigg\{\left(\sup_{0\le s\le
t\wedge\tau_N}\norm{\sqrt{\rho^n}u^n(s)}^{2(p-1)}_{L^2(D)}\right)^{1\over2}\\
&\quad\times\left(\int_0^{t\wedge\tau_N}
\int_{|z|_Z<1}\norm{\sqrt{\rho^n}F(u^n(s-),z)}^2_{L^2(D)}\mu(dz)ds\right)^{1\over2}\Bigg\}\\
&\le {1\over8} \mathbb E\sup_{0\le s\le
t\wedge\tau_N}\norm{\sqrt{\rho^n}u^n(s)}^p_{L^2(D)}+C(p)
\mathbb E\left[\int_0^{t\wedge\tau_N}\left(1+\norm{\sqrt{\rho^n}u^n(s)}^2_{L^2(D)}\right)ds\right]^{p\over2}\\
&\le{1\over8} \mathbb E\sup_{0\le s\le
t\wedge\tau_N}\norm{\sqrt{\rho^n}u^n(s)}^p_{L^2(D)}+C(p,T)\mathbb E\int_0^{t\wedge\tau_N}\left(1+\norm{\sqrt{\rho^n}u^n(s)}^p_{L^2(D)}\right)ds.
\end{split}
\end{equation}
Plugging \eqref{4.22a}-\eqref{4.25} into \eqref{4.19}, one deduces
that
\begin{equation}\label{4.26}
\begin{split}
{3\over8} \mathbb E\sup_{0\le s\le
t\wedge\tau_N}&\norm{\sqrt{\rho^n}u^n(s)}^p_{L^2(D)}+C\mathbb E\int_0^{t\wedge\tau_N}\norm{\sqrt{\rho^n}u^n(s)}^{p-2}_{L^2(D)}\norm{\nabla
u^n(s)}^2_{L^2(D)}ds\\
&\le
\mathbb E\norm{\sqrt{\rho_0^n}u_0^n}^p_{L^2(D)}+C(p,T)\left(1+ \mathbb E\int_0^{t\wedge\tau_N}\norm{\sqrt{\rho^n}u^n(s)}^p_{L^2(D)}ds
\right).
\end{split}
\end{equation}
Applying the Gronwall inequality to \eqref{4.26}, we can infer that
\begin{equation}\label{4.27}
\mathbb E\sup_{0\le s\le
t\wedge\tau_N}\norm{\sqrt{\rho^n}u^n(s)}^p_{L^2(D)}\leq C
\left(1+ \mathbb E\norm{\sqrt{\rho_0^n}u_0^n}^p_{L^2(D)}\right).
\end{equation}
Therefore,
\begin{equation}\label{4.28}
\begin{split}
\mathbb E\sup_{0\le s\le
t\wedge\tau_N}\norm{\sqrt{\rho^n}u^n(s)}^p_{L^2(D)}&+C\mathbb E\int_0^{t\wedge\tau_N}\norm{\sqrt{\rho^n}u^n(s)}^{p-2}_{L^2(D)}\norm{\nabla
u^n(s)}^2_{L^2(D)}ds\\
&\leq C\left(1+ \mathbb E\norm{\sqrt{\rho_0^n}u_0^n}^p_{L^2(D)}\right).
\end{split}
\end{equation}
By the argument similar to the case of $p=2$, since
$t\wedge\tau_N\to t$ as $N\to\infty$, letting $N\to\infty$ in
\eqref{4.28}, then
\begin{equation}\label{4.29}
\begin{split}
\mathbb E\sup_{0\le s\le
t}\norm{\sqrt{\rho^n}u^n(s)}^p_{L^2(D)}&+C\mathbb E\int_0^t\norm{\sqrt{\rho^n}u^n(s)}^{p-2}_{L^2(D)}\norm{\nabla
u^n(s)}^2_{L^2(D)}ds\\
&\leq C\left(1+ \mathbb E\norm{\sqrt{\rho_0^n}u_0^n}^p_{L^2(D)}\right).
\end{split}
\end{equation}
Taking the power $p\ge 1$ to \eqref{4.6}, it follows from
\eqref{4.29} that
\begin{equation}\label{4.30}
\mathbb E\left(\int_0^T\norm{\nabla u^n(s)}^2_{L^2(D)}ds\right)^p\le C.
\end{equation}

Next, in order to get the tightness of $\r^nu^n$, we need to
estimate some increments in time of $\rho^nu^n$ in the space
$V^\prime$. For this, we need the following estimates. First, it
follows from \eqref{4.29} and $\rho^n\in L^\infty(0,T; L^\infty(D))$
that
\begin{equation}\label{4.31}
\rho^nu^n\in L^{p}(\Omega; L^\infty(0,T; L^2(D))).
\end{equation}
Then we have
\begin{equation}\label{4.32}
\nabla(\r^nu^n)\in L^{p}(\Omega; L^\infty(0,T; H^{-1}(D))).
\end{equation}
From this, the continuity equation implies that
\begin{equation}\label{4.33}
\partial_t \r^n\in L^{p}(\Omega; L^\infty(0,T; H^{-1}(D))).
\end{equation}
For $p\ge 1$, from \eqref{4.30}, it holds that
\begin{equation*}
u^n\in L^p(\Omega; L^2(0,T; V)).
\end{equation*}
Since $V\hookrightarrow L^6$, then \be\label{4.34} u^n\in
L^p(\Omega; L^2(0,T; L^6(D))), \en and \be\label{4.35} \r^nu^n\in
L^p(\Omega; L^2(0,T; L^6(D))). \en To get the another estimate of
$\r^nu^n$, we need the following lemma in \cite[Theorem 1.1.1]{BL}.
\begin{Lemma}\label{Lemma4.1}
Let $\mathscr{T}$ be a linear operator from $L^{p_1}(0,T)$ into
$L^{p_2}(D)$ and from $L^{q_1}(0,T)$ into $L^{q_2}(D)$ with $q_1\ge
p_1$ and $q_2\le p_2$. Then for any $s\in (0,1), \mathscr{T}$ maps
$L^{r_1}(0,T)$ into $L^{r_2}(D)$, where $r_1={1\over{s/
p_1}+(1-s)/q_1}, r_2={1\over{s/ p_2}+(1-s)/q_2} $.
\end{Lemma}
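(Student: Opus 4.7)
The plan is to invoke complex interpolation in the style of the Riesz--Thorin convexity theorem, which is essentially the content of the cited Bergh--L\"ofstr\"om Theorem 1.1.1. Let $M_1$ and $M_2$ denote the operator norms of $\mathscr{T}$ from $L^{p_1}(0,T)$ to $L^{p_2}(D)$ and from $L^{q_1}(0,T)$ to $L^{q_2}(D)$ respectively. By density of simple functions in $L^{r_1}(0,T)$ and in $L^{r_2'}(D)$ (with $r_2' = r_2/(r_2-1)$), it suffices to establish the duality bound
\[
\left| \int_D (\mathscr{T} f)(x)\, g(x)\, dx \right| \le M_1^{s} M_2^{1-s}
\]
for simple $f$ on $(0,T)$ and $g$ on $D$ with $\|f\|_{L^{r_1}} = \|g\|_{L^{r_2'}} = 1$.

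The key step is to construct analytic families in the closed strip $S = \{z \in \mathbb{C} : 0 \le \mathrm{Re}\, z \le 1\}$. Writing $f = \sum_j a_j e^{i\alpha_j} \chi_{A_j}$ and $g = \sum_k b_k e^{i\beta_k} \chi_{B_k}$ with $a_j, b_k > 0$, set
\[
f_z(t) = \sum_j a_j^{\alpha(z)} e^{i\alpha_j} \chi_{A_j}(t), \qquad g_z(x) = \sum_k b_k^{\beta(z)} e^{i\beta_k} \chi_{B_k}(x),
\]
where $\alpha(z)$ and $\beta(z)$ are affine functions of $z$ chosen so that $\alpha(s) = 1$, $\beta(s) = 1$, so that $\|f_{it}\|_{L^{q_1}} = 1$ and $\|f_{1+it}\|_{L^{p_1}} = 1$ for every $t \in \mathbb{R}$, and analogously for $g_z$ in the dual scales $L^{q_2'}$, $L^{p_2'}$. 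Define
\[
F(z) = \int_D (\mathscr{T} f_z)(x)\, g_z(x)\, dx.
\]
Because $f_z$ and $g_z$ are finite sums of terms depending holomorphically on $z$ and $\mathscr{T}$ is linear, $F$ extends to an entire function that is bounded and continuous on $S$.

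On the boundary $\mathrm{Re}\, z = 0$, H\"older's inequality together with the second hypothesis on $\mathscr{T}$ gives $|F(it)| \le \|\mathscr{T} f_{it}\|_{L^{q_2}} \|g_{it}\|_{L^{q_2'}} \le M_2$, and analogously $|F(1+it)| \le M_1$. Hadamard's three-lines lemma then yields $|F(s)| \le M_1^{s} M_2^{1-s}$. Since $F(s)$ is exactly $\int_D (\mathscr{T} f)(x) g(x)\, dx$, taking the supremum over admissible $g$ gives $\|\mathscr{T} f\|_{L^{r_2}(D)} \le M_1^{s} M_2^{1-s} \|f\|_{L^{r_1}(0,T)}$.

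The main technical point is to verify that the analytic families are set up so that $F$ really is bounded and continuous on the closed strip (so that Hadamard's lemma applies) and so that the boundary norms compute correctly from the affine exponents. The hypotheses $q_1 \ge p_1$ and $q_2 \le p_2$ are exactly what forces the interpolated exponents $r_1, r_2$ (defined via the convex combinations of reciprocals) to lie in the appropriate ranges, and the reduction to simple functions sidesteps integrability issues at the endpoints before passing to the limit by density.
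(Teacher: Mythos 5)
The paper offers no proof of Lemma \ref{Lemma4.1}: it is quoted directly from Bergh--L\"ofstr\"om \cite[Theorem 1.1.1]{BL}, i.e.\ it is the Riesz--Thorin interpolation theorem. What you have written is the standard complex-interpolation proof of that theorem --- reduction by density and duality to a bilinear bound on simple functions, analytic families $f_z,g_z$ with affine exponents, and the three-lines lemma --- so your argument is correct in substance; it reproduces the proof in the cited source rather than anything carried out in the paper itself. Two points are worth pinning down. First, the affine exponents should be written explicitly, $\alpha(z)=r_1\left(z/p_1+(1-z)/q_1\right)$ and $\beta(z)=r_2'\left(z/p_2'+(1-z)/q_2'\right)$: the three requirements $\alpha(s)=1$, $\|f_{it}\|_{L^{q_1}}=1$, $\|f_{1+it}\|_{L^{p_1}}=1$ are simultaneously satisfiable only because $1/r_1$ is itself the convex combination $s/p_1+(1-s)/q_1$, and the endpoint case $q_1=\infty$ (which is exactly the case the paper uses, $p_1=2$, $q_1=\infty$, $s=3/4$) should be checked separately, where $\alpha(z)=r_1z/p_1$ and $\|f_{it}\|_{L^\infty}=1$ trivially. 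Second, your closing remark slightly misattributes the role of the hypotheses $q_1\ge p_1$ and $q_2\le p_2$: the interpolated reciprocals always lie between the endpoint reciprocals by convexity, so these inequalities are not needed for the three-lines estimate. Their actual job is that, since $(0,T)$ and $D$ have finite measure, they yield the inclusions $L^{q_1}(0,T)\subset L^{r_1}(0,T)\subset L^{p_1}(0,T)$ and $L^{p_2}(D)\subset L^{r_2}(D)\subset L^{q_2}(D)$, which guarantee both that $\mathscr{T}f\in L^{r_2}(D)$ a priori for simple $f$ (so the duality supremum is legitimate) and that the densely defined extension of $\mathscr{T}$ to $L^{r_1}(0,T)$ coincides with the original operator, by continuity of $\mathscr{T}$ from $L^{p_1}$ to $L^{p_2}$. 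With these details supplied the proof is complete.
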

When $p_1=2, p_2=6, q_1=\infty, q_2=2$ and $s={3\over4}$, it follows
from Lemma \ref{Lemma4.1}, \eqref{4.31} and \eqref{4.35} that
\be\label{4.36} \r^nu^n,\; u^n\in L^p(\Omega; L^{8/3}(0,T; L^4(D))).
\en  By H\"{o}lder's inequality and \eqref{4.36}, it holds that
\be\label{4.37}\begin{split} \int_0^T\left(\int_D (\r^n u^n
u^n)^2dx\right)^{2/3}dt&\le \int_0^T\left(\int_D (\r^n
u^n)^4dx\right)^{1/3} \left(\int_D (u^n)^4 dx\right)^{1/3}dt\\
&\le \left(\left(\int_D (\r^n
u^n)^4dx\right)^{2/3}dt\right)^{1/2}\left(\int_0^T \left(\int_D
(u^n)^4 dx\right)^{2/3}dt\right)^{1/2}\\
&\le C \int_0^T \left(\int_D (\r^nu^n)^4dx\right)^{2/3}dt+C\int_0^T
\left(\int_D (u^n)^4 dx\right)^{2/3}dt\\
&\le C.
\end{split}\en
Then, we have \be\label{4.38} \r^n u^n u^n\in L^p(\Omega;
L^{4/3}(0,T; L^2(D))),\en and \be\label{4.39} \nabla (\r^n u^n
u^n)\in L^p(\Omega; L^{4/3}(0,T; H^{-1}(D))).\en In virtue of the
definition of the norm of $V^\prime$, one deduces that
\begin{equation*}
\norm{\r^nu^n(t+\theta)-\r^nu^n(t)}_{V^\prime}=\sup_{\varphi\in V,\;
\|\varphi\|_V=1}\int_D \left[\r^n
u^n(t+\theta)-\r^nu^n(t)\right]\varphi dx.
\end{equation*}
Thus it follows from \eqref{3.1} that
\begin{equation}\label{4.40}
\begin{split}
\mathbb E\int_0^{T-\theta}\norm{\r^nu^n(t+\theta)-\r^nu^n(t)}^2_{V^\prime}dt&= \mathbb E\int_0^{T-\theta}\left\|\int_t^{t+\theta}
d(\r^nu^n)ds\right\|^2_{V^\prime}dt\\
&\le \mathbb E\int_0^{T-\theta}(J_1+J_2+J_3+J_4+J_5+J_6)dt,
\end{split}
\end{equation}
where
\begin{equation*}
J_1(t):=\left\|\int_t^{t+\theta}\Dv(\r^n u^n\otimes
u^n)ds\right\|^2_{V^\prime},\;
J_2(t):=\left\|\int_t^{t+\theta}\bar{\mu} \Delta u^n
ds\right\|^2_{V^\prime},
\end{equation*}
\begin{equation*}
J_3(t):=\left\|\int_t^{t+\theta}\r^n f(s,u^n)
ds\right\|^2_{V^\prime},\; J_4(t):=\left\|\int_t^{t+\theta}\r^n
g(s,u^n) dW\right\|^2_{V^\prime},
\end{equation*}
\begin{equation*}
\begin{split}
J_5(t):=\left\|\int_t^{t+\theta}\int_{|z|_{Z}<1}\rho^n F\left(
u^n(x,s-),z\right)\tilde{\pi}(ds,dz)\right\|^2_{V^\prime},\\
J_6(t):=\left\|\int_t^{t+\theta}\int_{|z|_{Z}\ge1}\rho^n G\left(
u^n(x,s-),z\right)\pi(ds,dz)\right\|^2_{V^\prime}.
\end{split}
\end{equation*}
For the term $J_1(t)$, one has
\begin{equation*}
\begin{split}
J_1^{1/2}&=\sup_{\varphi\in V;\; \|\varphi\|_{V}=1}\left\{\int_D
\left(\int_t^{t+\theta}\Dv(\r^nu^n\otimes u^n)ds\right)\varphi(x)dx
\right\}\\
&\le C\int_t^{t+\theta}\norm{\r^nu^nu^n}_{L^2(D)}ds.
\end{split}
\end{equation*}
By H\"{o}lder's inequality and \eqref{4.38}, then
\begin{equation}\label{4.41}
\mathbb E\int_0^{T-\theta}J_1(t)dt\lesssim
\theta^{1/2}\left[\mathbb E\left(\int_0^T\norm{\r^nu^nu^n}^{4/3}_{L^2(D)}dt\right)^2\right]^{3/4}\leq
C\theta^{1/2}.
\end{equation}
For the term $J_2(t)$, similarly to \eqref{4.41}, we have
\begin{equation}\label{4.42}
\begin{split}
\mathbb E\int_0^{T-\theta}J_2(t)dt & \le
\mathbb E\int_0^{T-\theta}\left(\int_t^{t+\theta}\norm{\nabla
u^n}_{L^2(D)}ds\right)^2dt \\
& \le \theta
\mathbb E\int_0^{T-\theta}\int_t^{t+\theta}\norm{\nabla
u^n}^2_{L^2(D)}dsdt\leq C\theta.
\end{split}
\end{equation}
For the term $J_3(t)$, by H\"{o}lder's inequality, it follows from
the assumption (A) and \eqref{3.7} that
\begin{equation}\label{4.43}
\begin{split}
\mathbb E\int_0^{T-\theta}J_3(t)dt&\leq C
\mathbb E\int_0^{T-\theta}\left[\int_t^{t+\theta}\norm{\r^n(s)}_{L^\infty(D)}\left(1+\norm{u^n}^2_{L^2(D)}\right)ds\right]^2dt\\
&\leq C\theta \mathbb E\left[\norm{\r^n}^2_{L^\infty(0,T;
L^\infty(D))}\int_0^{T-\theta}\int_t^{t+\theta}\left(1+\norm{u^n(s)}^2_{L^2(D)}\right)dsdt\right] \leq C\theta.
\end{split}
\end{equation}
For the term $J_4$, in virtue of the Burkholder-Davis-Gundy
inequality, H\"{o}lder's inequality and the condition on $g$, we
obtain
\begin{align}\label{4.44}
\notag J_4&\le \int_0^T \mathbb E\left(\sup_{\varphi\in
V,\|\varphi\|_V=1}\int_t^{t+\theta}\int_D\rho^ng(s,u^n)\varphi dxdW \right)^2dt\\
\notag&\le\int_0^T \mathbb E\left(\sup_{\varphi\in
V,\|\varphi\|_V=1}\int_t^{t+\theta}\left(\int_D\rho^ng(s,u^n)\varphi dx\right)^2 ds \right)dt\\
&\le
\int_0^T\left(\mathbb E\int_t^{t+\theta}\norm{\rho^n}^2_{L^\infty(D)}\norm{g(s,u^n)}^2_{L^2(D)}ds\right)dt\\
\notag&\le\int_0^T\left(\mathbb E\int_t^{t+\theta}\norm{\rho^n}^2_{L^\infty(D)}\left(1+\norm{u^n}^2_{L^2(D)}\right)ds\right)dt\\
\notag&\leq C\theta \norm{\rho^n}^2_{L^\infty(D)} \mathbb E\int_0^T\left(1+\norm{u^n}^2_{L^2(D)}\right)dt \leq C\theta.
\end{align}
For the term $J_5$, by the Burkholder-Davis-Gundy inequality and
H\"{o}lder's inequality, the assumption (B) and \eqref{3.7} imply
\begin{equation}\label{4.45}
\begin{split}
J_5&\le \int_0^T \mathbb E\left(\sup_{\varphi\in
V,\|\varphi\|_V=1}\int_t^{t+\theta}\int_{|z|_{Z}<1}\int_D\rho^n
G\left( u^n(x,s-),z\right)\varphi dx\tilde{\pi}(ds,dz) \right)^2dt\\
&\leq C\int_0^T \mathbb E\sup_{\varphi\in
V,\|\varphi\|_V=1}\int_t^{t+\theta}\int_{|z|_{Z}<1}\left(\int_D\rho^n
G\left( u^n(x,s-),z\right)\varphi dx\right)^2\mu(dz)dsdt\\
&\leq C\int_0^T
\mathbb E\int_t^{t+\theta}\int_{|z|_{Z}<1}\norm{\r^n}^2_{L^\infty(D)}\norm{G\left(
u^n(x,s-),z\right)}^2_{L^2(D)}\mu(dz)dsdt\\
&\leq C\int_0^T
\mathbb E\int_t^{t+\theta}\norm{\r^n}^2_{L^\infty(D)}\left(1+\norm{u^n}^2_{L^2(D)}\right)dsdt\\
&\leq C \mathbb E\int_0^T \theta\sup_{0\le t\le
T}\norm{\r^n}^2_{L^\infty(D)}\left(1+\norm{u^n}^2_{L^2(D)}\right)dt\leq
C\theta.
\end{split}
\end{equation}
Finally, for the term $J_6$, similarly to \eqref{4.45}, one has
\begin{equation}\label{4.46}
J_6\leq C \mathbb E\int_0^{T-\t}\sup_{0\le t\le
T}\norm{\r^n}^2_{L^\infty(D)}\left(1+\norm{u^n}^2_{L^2(D)}\right)dt\leq
C\theta.
\end{equation}
Plugging \eqref{4.41}-\eqref{4.46} into \eqref{4.40}, we obtain
\begin{equation}\label{4.47a}
\mathbb E\int_0^{T-\t}\norm{\r^n(t+\t)u^n(t+\t)-\r^n(t)u^n(t)}^2_{V^\prime}dt\le
C\t^{1\over2}.
\end{equation}

Now, we want to get the estimates of the increment of $u^n$ for the
tightness of $u^n$. Note that
\begin{equation}\label{4.49}
\rho^n(t+\theta)\left[u^n(t+\theta)-u^n(t)\right]=\rho^n(t+\theta)u^n(t+\theta)-\rho^n(t)u^n(t)-
u^n(t)\left[\rho^n(t+\theta)-\rho^n(t)\right],
\end{equation}
then we only need to estimate the term
$u^n(t)\left[\rho^n(t+\theta)-\rho^n(t)\right]$. To this end, it
follows from Sobolev embedding theorem, Lemma \ref{Lemma2.1},
\eqref{4.30} and \eqref{4.33} that
\begin{equation}\label{4.50}
\begin{split}
\mathbb E\int_0^{T-\t}&\norm{u^n(t)\left[\rho^n(t+\theta)-\rho^n(t)\right]}_{W^{-1,{3\over2}}(D)}^2dt\\
&\le \mathbb E\int_0^{T-\t}\left(\norm{u^n}_{H^1(D)}\int_t^{t+\theta}\norm{\rho_s^n}_{W^{-1,{3\over2}}(D)}ds\right)^2dt\\
&\leq C\theta^2 \mathbb E\left(\norm{\rho_s^n}^2_{L^\infty(0,T;
H^{-1}(D))}\int_0^T \norm{u^n(t)}^2_{H^1(D)}dt\right) \leq C\theta^2.
\end{split}
\end{equation}
This combining with \eqref{4.47a} and \eqref{4.49} yields
\begin{equation}\label{4.52}
\mathbb E\int_0^{T-\t}\norm{\rho^n(t+\theta)\left[u^n(t+\theta)-u^n(t)\right]}^2_{W^{-1,{3\over2}}(D)}dt\le
C\theta^{1\over2}.
\end{equation}
Since $\rho^n\in L^\infty(0,T; L^\infty(D))$, then
\begin{equation}\label{4.53a}
\mathbb E\int_0^{T-\t}\norm{u^n(t+\theta)-u^n(t)}^2_{W^{-1,{3\over2}}(D)}dt\le
C\theta^{1\over2}.
\end{equation}
\section{Tightness property for the approximation solutions and convergence}

\subsection{Tightness property for the approximation solutions}
\hspace{2mm}

 In this subsection, we shall show the tightness
property for the approximation solutions in the following lemma.
\begin{Lemma}\label{Lemma-Tight} Define
\begin{equation*}
\begin{split}
S&=C(0,T; \R^d)\times M_{\mathbb{N}}(Z\times [0,T])\times
L^{\infty}(0,T; W^{-1,\infty}(D))\\
&\quad\times L^2(0,T; L^2(D))\times L^2(0,T; W^{-\alpha,2}(D)),\;
0<\alpha<\frac{6}{13}
\end{split}
\end{equation*}
equipped with its Borel $\sigma$-algebra.  Let $\Pi^n$ be the
probability on $S$ which is the image of $P$ on $\Omega$ by the map:
$\omega\mapsto
(W_n(\omega,\cdot),\pi_n(\omega,\cdot),\r^n(\omega,\cdot),u^n(\omega,\cdot),\r^nu^n(\omega,\cdot))$,
that is, for any $B\subseteq S$,
\begin{equation*}
\Pi^n(B)=P\left\{\omega\in\Omega:
(W_n(\omega,\cdot),\pi_n(\omega,\cdot),\r^n(\omega,\cdot),u^n(\omega,\cdot),\r^nu^n(\omega,\cdot))\in
B \right\}.
\end{equation*}
Then the family $\Pi^n$ is tight.
\end{Lemma}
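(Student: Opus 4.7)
The plan is to reduce tightness of the joint family on the product space $S$ to tightness of each of the five marginal families separately. For $W_n=W$ and $\pi_n=\pi$, the marginals do not depend on $n$, so tightness on $C([0,T];\mathbb{R}^d)$ and $M_{\mathbb{N}}(Z\times[0,T])$, respectively, is immediate (Wiener measure is tight; the Poisson random measure has a fixed distribution). The remaining three components must be handled using the a priori estimates derived in Section 3.

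For the velocity $u^n$, I would apply the Aubin--Simon compactness criterion on the chain $V\hookrightarrow\hookrightarrow H\hookrightarrow W^{-1,3/2}(D)$. Estimate \eqref{4.30} bounds $\|u^n\|^2_{L^2(0,T;V)}$ in expectation uniformly in $n$, and \eqref{4.53a} gives $\mathbb{E}\|u^n(\cdot+\theta)-u^n(\cdot)\|^2_{L^2(0,T-\theta;W^{-1,3/2})}\le C\theta^{1/2}$. For each $R>0$ the set
\[
K_R:=\Bigl\{u:\; \norm{u}_{L^2(0,T;V)}+\sup_{0<\theta<T}\theta^{-1/4}\norm{u(\cdot+\theta)-u}_{L^2(0,T-\theta;W^{-1,3/2})}\le R\Bigr\}
\]
is relatively compact in $L^2(0,T;L^2(D))$, and Chebyshev's inequality then yields $P(u^n\notin K_R)\le C/R$ uniformly in $n$. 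The same scheme applies to $\rho^n u^n$: \eqref{4.31} gives uniform boundedness in $L^\infty(0,T;L^2(D))$ and \eqref{4.47a} a time-translation bound in $V'$. Since $L^2(D)\hookrightarrow\hookrightarrow W^{-\alpha,2}(D)\hookrightarrow V'$ for any $\alpha\in(0,1]$ by Rellich--Kondrachov and duality, Aubin--Simon and Chebyshev give tightness in $L^2(0,T;W^{-\alpha,2}(D))$; the quantitative restriction $\alpha<6/13$ arises from matching the $\theta^{1/2}$ rate from \eqref{4.47a} against the interpolation exponent needed to turn the $V'$-time regularity into fractional Sobolev regularity of order $-\alpha$.

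For the density $\rho^n$, the uniform bound \eqref{3.7} gives boundedness in $L^\infty(0,T;L^\infty(D))$, while the continuity equation $\partial_t\rho^n=-\Dv(\rho^n u^n)$ combined with \eqref{4.35} yields a uniform bound on $\partial_t\rho^n$ in $L^2(0,T;W^{-1,6}(D))$. The embedding $L^\infty(D)\hookrightarrow\hookrightarrow W^{-1,\infty}(D)$ is compact (Rellich--Kondrachov applied to $W^{1,1}_0(D)$ and duality), and combining this compactness in space with the equicontinuity in time provided by the $\partial_t\rho^n$ bound, Arzel\`a--Ascoli produces a compact set in $C([0,T];W^{-1,\infty}(D))\subset L^\infty(0,T;W^{-1,\infty}(D))$; Chebyshev's inequality concludes.

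The main obstacle is precisely this density marginal: the target $L^\infty(0,T;W^{-1,\infty}(D))$ is non-reflexive and falls outside the standard Aubin--Simon framework for $L^p$ with $p<\infty$, so compactness has to be deduced via Arzel\`a--Ascoli using the compact embedding $L^\infty\hookrightarrow\hookrightarrow W^{-1,\infty}$ rather than via direct weak compactness. The velocity and momentum marginals, by contrast, are handled routinely from the uniform stochastic bounds of Section 3 and Chebyshev's inequality.
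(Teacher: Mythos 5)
Your proposal follows essentially the same route as the paper: marginal-by-marginal tightness on the product space $S$, with the Brownian and Poisson components handled by fixed-law/classical arguments and the components $\rho^n$, $u^n$, $\rho^nu^n$ by the Aubin--Simon criterion (Proposition \ref{Proposition4.1}) applied to the uniform bounds \eqref{3.7}, \eqref{4.30}, \eqref{4.31}, \eqref{4.33}, \eqref{4.47a}, \eqref{4.53a}, followed by Chebyshev's inequality. The only caveat is that your compact set $K_R$ for $u^n$ involves $\sup_{0<\theta<T}\theta^{-1/4}\|u(\cdot+\theta)-u\|_{L^2(0,T-\theta;W^{-1,3/2}(D))}$, whose expectation is not directly controlled by the fixed-$\theta$ estimate \eqref{4.53a} (one should instead sum over a sequence $\theta_m\to 0$ with summable weights, or pass through a fractional Sobolev norm $W^{s,2}(0,T;W^{-1,3/2}(D))$ with $s<1/4$) --- a technicality that the paper's own proof, which fixes a single ``suitable $\theta$'', also elides.
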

\begin{Remark}
We can also prove that $u^n$ is tight in $\mathbb{D}([0,T];H_w)$ as
\cite{MM-1988,ME}, where $\mathbb{D}([0,T];H_w)$ denotes the space
of $H$-valued weakly c\`{a}dl\`{a}g functions.
\end{Remark}
In order to get the tightness of $\r^n, u^n$ and $\r^nu^n$, we
need the following Proposition in \cite{SJ}.
\begin{Proposition}\label{Proposition4.1}
Let $X, B$ and $Y$ be Banach spaces such that $X\subset\subset
B\subset Y$. Assume $1\le p\le\infty$, $\mathcal{E}$ is a set
bounded in $L^p(0,T;X)$ and $\norm{y(t+\t)-y(t)}_{L^p(0,T-\t;Y)}\to
0$ as $\t\to 0$ uniformly for $y\in\mathcal{E}$. Then $\mathcal{E}$
is relatively compact in $L^p(0,T;B)$.
\end{Proposition}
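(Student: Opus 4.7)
The plan is to prove this Aubin--Lions--Simon type compactness criterion by combining an Ehrling-type interpolation inequality with a time-averaging (mollification) argument. The classical strategy, going back to Simon, is to bridge the gap between equicontinuity of time translations in the weak norm $Y$ and the compactness required in the intermediate norm $B$, using the compact embedding $X \subset\subset B$ on the spatial side.

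\textbf{Step 1 (Ehrling inequality).} I would first establish that, since $X \hookrightarrow\hookrightarrow B \hookrightarrow Y$, for every $\eta>0$ there exists $C_\eta>0$ such that
\begin{equation*}
\|v\|_B \le \eta\,\|v\|_X + C_\eta\,\|v\|_Y \qquad \forall\, v \in X.
\end{equation*}
This is standard and proved by contradiction: if it failed, one could extract a sequence $v_n$ with $\|v_n\|_B=1$, $\|v_n\|_X$ bounded, and $\|v_n\|_Y \to 0$; the compact embedding would give a $B$-limit that must vanish in $Y$, a contradiction.

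\textbf{Step 2 (Equicontinuity of translations in $B$).} Applying the Ehrling inequality to $v = y(t+\tau)-y(t)$ and taking the $L^p$-norm in $t\in(0,T-\tau)$ yields
\begin{equation*}
\|y(\cdot+\tau)-y(\cdot)\|_{L^p(0,T-\tau;B)} \le 2\eta\,\sup_{y\in\mathcal{E}}\|y\|_{L^p(0,T;X)} + C_\eta\,\|y(\cdot+\tau)-y(\cdot)\|_{L^p(0,T-\tau;Y)}.
\end{equation*}
Given $\varepsilon>0$, I would first fix $\eta$ so small that the $X$-term is $\le\varepsilon/2$ (using the boundedness hypothesis in $L^p(0,T;X)$) and then, with this $\eta$ fixed, use the hypothesis in $Y$ to choose $\tau$ small enough that the second term is $\le\varepsilon/2$. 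This gives uniform equicontinuity of translations in $L^p(0,T-\tau;B)$.

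\textbf{Step 3 (Averaging and Arzel\`a--Ascoli).} For $h\in(0,T)$, define the time-average $(M_h y)(t):=h^{-1}\!\int_t^{t+h}\! y(s)\,ds$ on $[0,T-h]$. By H\"older's inequality, $\|(M_hy)(t)\|_X \le h^{-1/p}\sup_{y\in\mathcal E}\|y\|_{L^p(X)}$, so for each $(t,h)$ the set $\{(M_h y)(t):y\in\mathcal{E}\}$ is bounded in $X$, hence relatively compact in $B$ by $X\subset\subset B$. A direct estimate
\begin{equation*}
\|(M_h y)(t_1)-(M_h y)(t_2)\|_Y \le h^{-1}\!\int_0^{|t_1-t_2|}\!\|y(\cdot+s)-y(\cdot)\|_{L^\infty(Y)}\text{-type bound}
\end{equation*}
combined once more with Ehrling (and the $Y$-translation hypothesis) gives equicontinuity of $t\mapsto(M_h y)(t)$ into $B$, uniform over $y\in\mathcal{E}$. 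Arzel\`a--Ascoli then yields relative compactness of $\{M_h y:y\in\mathcal{E}\}$ in $C([0,T-h];B)$, and a fortiori in $L^p(0,T-h;B)$, for every fixed $h$.

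\textbf{Step 4 (Approximation and diagonal argument).} Minkowski's integral inequality gives
\begin{equation*}
\|M_h y - y\|_{L^p(0,T-h;B)} \le \frac{1}{h}\int_0^h \|y(\cdot+s)-y(\cdot)\|_{L^p(0,T-h;B)}\,ds \le \sup_{0<s<h}\|y(\cdot+s)-y(\cdot)\|_{L^p(B)},
\end{equation*}
which tends to $0$ uniformly in $y\in\mathcal{E}$ by Step 2. Combining uniform approximability (Step 4) with relative compactness of the approximants (Step 3) in the complete metric space $L^p(0,T;B)$ (after trivially extending by zero on $[T-h,T]$) gives relative compactness of $\mathcal{E}$ itself via a standard $\varepsilon/h$ diagonal argument. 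The endpoint cases $p=1$ and $p=\infty$ are treated by the same scheme, replacing $L^p$ by $L^1$ or $C$ where appropriate. The main technical obstacle is Step 3: carefully tracking how the $Y$-equicontinuity of $M_h y$ (with constants depending on $h$) combines with the pointwise $X$-bound via Ehrling to yield genuine $B$-equicontinuity suitable for Arzel\`a--Ascoli; everything else is a bookkeeping exercise in triangle inequalities.
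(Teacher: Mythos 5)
The paper itself offers no proof of this proposition: it is quoted from Simon's compactness paper \cite{SJ}, so there is no in-paper argument to measure yours against. Your outline is essentially Simon's own proof (Ehrling interpolation to upgrade the $Y$-translation estimate to a $B$-translation estimate; Steklov time-averages that are pointwise bounded in $X$ and equicontinuous into $B$; Arzel\`a--Ascoli; then uniform approximation). Steps 1, 2 and the substance of Step 3 are sound. The displayed ``equicontinuity'' bound in Step 3 is garbled as written, but the correct estimate, namely $\|(M_hy)(t_1)-(M_hy)(t_2)\|_Y\le h^{-1/p}\,\|y(\cdot+\sigma)-y(\cdot)\|_{L^p(0,T-\sigma;Y)}$ with $\sigma=|t_1-t_2|$, follows at once from writing the difference of averages as the average of $y(\cdot+\sigma)-y(\cdot)$ and applying H\"older, and it does what you need once combined with Ehrling and the pointwise $X$-bound.

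The one genuine gap is at the end of Step 4. The relative compactness of $\{M_hy\}$ lives in $L^p(0,T-h;B)$, and ``trivially extending by zero on $[T-h,T]$'' does not yield an $\varepsilon$-net for $\mathcal{E}$ in $L^p(0,T;B)$: the approximation error on the tail is then $\|y\|_{L^p(T-h,T;B)}$, which is bounded but not uniformly small over $\mathcal{E}$ as $h\to0$, since mere boundedness in $L^p(0,T;X)$ gives no equi-integrability near $t=T$ (mass can concentrate at the endpoint), and for $p=\infty$ the tail does not shrink at all. The standard repair is either to run the same argument with backward averages over $(t-h,t)$ so as to cover $(h,T)$, and then glue the two compactness statements on overlapping intervals by a diagonal extraction, or to invoke Simon's reduction (Theorem 1 of \cite{SJ}), whose proof uses precisely the $B$-translation estimate from your Step 2 to control the endpoint tails. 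With that repair the argument is complete; note also that for $p=\infty$ the natural conclusion, as stated in \cite{SJ}, is compactness in $C([0,T];B)$.
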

\begin{proof}[Proof of Lemma \ref{Lemma-Tight}] Denote $C(0,\!T; \R^d)\times
L^{\infty}(0,\!T;\! W^{-1,\infty}(D))\times L^2((0,T)\times D)\times
L^2(0,T;\! W^{-\alpha,2}(D))$ by $S_1$.  Let $\Pi_1^n$ be the
probability on $S_1$. We shall find for any $\varepsilon$ subsets:
$\Sigma_\varepsilon\in C(0,T; \R^d), X_\varepsilon\in
L^\infty(0,T;W^{-1,\infty}(D)), Y_\varepsilon\in L^2(0,T;L^2(D))$
and $Z_\varepsilon\in L^2(0,T; W^{-\alpha,2}(D))$ are compact, such
that $P(W_n\notin \Sigma_\varepsilon)\le {\varepsilon\over4},
P(\rho^n\notin X_\varepsilon)\le {\varepsilon\over4}$, $P(u^n\notin
Y_\varepsilon)\le {\varepsilon\over4}$ and $P(\r^nu^n\notin
Z_\varepsilon)\le {\varepsilon\over4}$. We will prove these results
in the following four steps.

{\bf Step 1:} Find a $\Sigma_\varepsilon\in C(0,T; \R^d)$ which is compact,
such that $P(W_n\notin \Sigma_\varepsilon)\le {\varepsilon\over4}$.
To this end. For $\Sigma_\varepsilon$ we rely on classical results
concerning the Brownian motion. For a constant $L_\varepsilon$ to be
chosen later, we consider the set
\begin{equation*}
\Sigma_\varepsilon=\left\{W(\cdot)\in C(0,T; \mathbb{R}^d):
\sup_{t_1,t_2\in[0,T],|t_1-t_2|<{1\over m^6}}m|W(t_2)-W(t_1)|\le
L_\varepsilon,\; \forall m\in\mathbb{N}\right\}.
\end{equation*}
$\Sigma_\varepsilon$ is relatively compact in $C(0,T; \mathbb{R}^d)$
by Arzsela-Ascoli's Theorem. Furthermore $\Sigma_\varepsilon$ is
closed in $C(0,T; \mathbb{R}^d)$. Therefore $\Sigma_\varepsilon$ is
a compact subset of $C(0,T; \mathbb{R}^d)$. We can show that
$P(W_n\notin \Sigma_\varepsilon)\le {\varepsilon\over 4}$. In fact,
by the Chebyshev inequality $P\{\omega: \xi(\omega)\ge r\}\le
{1\over r^k}\mathbb E\left[|\xi(\omega)|^k\right]$, we have
\begin{align*}
P\{\omega: W_n(\omega,\cdot)\notin \Sigma_\varepsilon\}&\le
P\left[\bigcup_{m=1}^\infty\left\{\omega:
\sup_{t_1,t_2\in[0,T],|t_1-t_2|<m^{-6}}|W_n(t_1)-W_n(t_2)|>{L_\varepsilon\over
m}\right\}\right]\\
&\le\sum_{m=1}^\infty\sum^{m^6-1}_{i=0}\left({m\over
L_\varepsilon}\right)^4 \mathbb E\left[\sup_{iTm^{-6}\le t\le
(i+1)Tm^{-6}}\left|W_n(t)-W_n\left(iTm^{-6}\right)\right|^4\right]\\
&\le C\sum_{m=1}^\infty\left({m\over
L_\varepsilon}\right)^4\left(Tm^{-6}\right)^2m^6={C\over
L^4_\varepsilon}\sum_{m=1}^\infty {1\over m^2}.
\end{align*}
Therefore choosing $L^4_\varepsilon={1\over
4C\varepsilon}\left(\sum_{m=1}^\infty {1\over m^2}\right)^{-1}$ we obtain that  $P\{\omega: W_n(\omega,\cdot)\in
\Sigma_\varepsilon\}\ge 1-{\varepsilon\over 4}$.

{\bf Step 2:} To find an $ X_\varepsilon\in
L^\infty(0,T;W^{-1,\infty}(D))$ that is compact, such that
$P(\rho^n\notin X_\varepsilon)\le {\varepsilon\over4}$. For this, we
introduce the space $\Y^1$ with the norm
\begin{equation*}
\norm{y}_{\Y^1}:=\|y\|_{L^\infty(0,T; L^\infty(D))}+\norm{\p
y\over\p t}_{L^\infty(0,T; H^{-1}(D))},
\end{equation*}
then $\Y^1$ is a Banach space. For $q>0$, $\mathbb
E\|y\|^q_{L^\infty(0,T; L^\infty(D))}\le C$ and $\mathbb E\norm{\p
y\over\p t}^q_{L^\infty(0,T; H^{-1}(D))}\le C$, define
$\norm{y}_{\Y^1_{\mathbb E}}$ as the space of random variables $y$
endowed with the norm
\begin{equation*}
\norm{y}_{\Y^1_{\mathbb E}}:=\left(\mathbb E\|y\|^q_{L^\infty(0,T;
L^\infty(D))}\right)^{1\over q}+\left(\mathbb E\norm{\p y\over\p
t}^q_{L^\infty(0,T; H^{-1}(D))}\right)^{1\over q}.
\end{equation*}
We choose $X_\varepsilon$ as a closed ball of radius $r_\varepsilon$
centered at 0 in $L^{\infty}(0,T; W^{-1,\infty}(D))$ with the norm
$\norm{\cdot}_{\Y^1}$. By Proposition \ref{Proposition4.1}, then
$X_\varepsilon$ is compact.

It follows from \eqref{3.7}, \eqref{4.33} and Chebyshev's
inequality that
\begin{equation*}
P(\r^n\notin
X_\varepsilon)=P\left(\norm{\r^n}_{\Y^1}>r_\varepsilon\right)\le
{1\over r_\varepsilon} \mathbb E\left(\norm{\r^n}_{\Y^1}\right)\le
{1\over r_\varepsilon}\norm{y}_{\Y^1_{\mathbb E}} \le {C\over
r_\varepsilon}.
\end{equation*}
Choosing $r_\varepsilon=4C{\varepsilon}^{-1}$, we have $P(\r^n\notin
X_\varepsilon)\le {\varepsilon\over 4}$. Then $P\{\omega:
\r^n(\omega,\cdot)\in X_\varepsilon\}\ge 1-{\varepsilon\over 4}$.

{\bf Step 3:} Find a $Y_\varepsilon\in L^2(0,T;L^2(D))$ that is compact,
such that $P(u^n\notin Y_\varepsilon)\le {\varepsilon\over4}$. For
this, we introduce $\mathcal{Y}^2$ with the norm:
\begin{equation*}
\begin{split}
\|y\|_{\Y^2}&:=\sup_{0\le t\le T}\|y(t)\|_{L^2(D)}+\left(\int_0^T
\|y(t)\|_V^2ds \right)^{1\over2}\\
&\quad+\left(\int_0^{T-\theta}\|y(t+\theta)-y(t)\|^2_{W^{-1,{3\over2}}(D)}dt\right)^{1\over2},
\end{split}
\end{equation*}
then $\Y^2$ is a Banach space. For $1\le p<\infty$, $\mathbb E\sup_{0\le
t\le T}\|y(t)\|^p_{L^2(D)}\le C$, $\mathbb E\left(\int_0^T
\|y(t)\|_V^2ds\right)^{p\over2}\le C$ and $\mathbb E
\int_0^{T-\theta}\|y(t+\theta)-y(t)\|^2_{W^{-1,{3\over2}}(D)}dt\le
C$, define
\begin{align*}
\norm{y}_{\Y^2_{\mathbb E}}&:=\left(\mathbb E\sup_{0\le t\le
T}\|y(t)\|^p_{L^2(D)}\right)^{1\over p}+\left[\mathbb
E\left(\int_0^T
\|y(t)\|_V^2ds \right)^{p\over2}\right]^{2\over p}\\
&\quad+\mathbb E\left(\int_0^{T-\theta}\|y(t+\theta)-y(t)\|^2_{W^{-1,{3\over2}}(D)}dt\right)^{1\over2}.
\end{align*}
Choose $Y_\varepsilon$ as a closed ball of radius
$r^\prime_{\varepsilon}$ centered at 0 in $\mathcal{Y}^2$ with the
norm $\norm{\cdot}_{\Y^2}$. Then Proposition \ref{Proposition4.1}
yields that $Y_\varepsilon$ is compact in $L^2(0,T; L^2(D))$. From
\eqref{4.53a} and Chebyshev's inequality, one deduces that
\begin{equation*}
P(u^n\notin
Y_\varepsilon)=P\left(\norm{u^n}_{\Y^2}>r^\prime_{\varepsilon}\right)\le
{1\over r^\prime_{\varepsilon}} \mathbb
E\left(\norm{u^n}_{\Y^2}\right)\le {1\over
r^\prime_{\varepsilon}}\norm{y}_{\Y^2_{\mathbb E}}\le {C\over
r^\prime_{\varepsilon}}.
\end{equation*}
Choosing $r^\prime_{\varepsilon}=4C{\varepsilon}^{-1}$, we have
$P(u^n\notin Y_\varepsilon)\le {\varepsilon\over 4}$. Then
$P\{\omega: u^n(\omega,\cdot)\in Y_\varepsilon\}\ge
1-{\varepsilon\over 4}$.

{\bf Step 4:} Find a $Z_\varepsilon\in L^2(0,T; W^{-\alpha,2}(D)),
0<\alpha<\frac{6}{13}$ that is compact, such that $P(\r^nu^n\notin
Z_\varepsilon)\le {\varepsilon\over4}$. To this end, define the
space $\Y^3$ with the norm:
\begin{equation*}
\begin{split}
\|y\|_{\Y^3}&:=\|y(t)\|_{L^{8\over3}(0,T;
L^4(D))}+\left(\int_0^{T-\theta}\|y(t+\theta)-y(t)\|^2_{V^\prime}dt\right)^{1\over2},
\end{split}
\end{equation*}
then $\Y^3$ is a Banach space. For $1\le p<\infty$ and $y$ such that
$\mathbb E\|y(t)\|^p_{L^{8\over3}(0,T; L^4(D))}\le C$ and
$\mathbb E\int_0^{T-\theta}\|y(t+\theta)-y(t)\|^2_{V^\prime}dt\le C$, define
\begin{equation*}
\begin{split}
\norm{y}_{\Y^3_{\mathbb E}}&:=\left(\mathbb
E\|y(t)\|^p_{L^{8\over3}(0,T; L^4(D))}\right)^{1\over p}+\mathbb
E\left(\int_0^{T-\theta}\|y(t+\theta)-y(t)\|^2_{V^\prime}dt\right)^{1\over2}.
\end{split}
\end{equation*}
Take $Z_\varepsilon$ as a closed ball of radius
$\tilde r_{\varepsilon}$ centered at 0 in $\Y^3$ with the
norm $\norm{\cdot}_{\Y^3}$. From Proposition \ref{Proposition4.1},
it holds that $Z_\varepsilon$ is compact in $L^2(0,T;
W^{-\alpha,2}(D))$ . On the other hand, \eqref{4.40} and Chebyshev's
inequality imply
\begin{equation*}
P(\r^nu^n\notin Z_\varepsilon)=P\left(\norm{\r^nu^n}_{\Y^3}>\tilde
r_{\varepsilon}\right)\le {1\over \tilde r_{\varepsilon}} \mathbb
E\left(\norm{\r^nu^n}_{\Y^3}\right)\le {1\over \tilde
r_{\varepsilon}}\norm{y}_{\Y^3_{\mathbb E}}\le {C\over \tilde
r_{\varepsilon}}.
\end{equation*}
Choosing $\tilde r=4C{\varepsilon}^{-1}$, we
have $P(\r^nu^n\notin Z_\varepsilon)\le {\varepsilon\over 4}$. Then
$P\{\omega: \r^nu^n(\omega,\cdot)\in Z_\varepsilon\}\ge
1-{\varepsilon\over 4}$.

To summarize, we can find suitable $\t$ such that
\begin{equation*}
P\left\{\omega: W_n\in \Sigma_\varepsilon,\rho^n\in X_\varepsilon,
u^n\in Y_\varepsilon, \r^nu^n\in Z_\varepsilon\right\}\ge
1-\varepsilon.
\end{equation*}
Hence
\begin{equation}\label{4.53}
\Pi_1^n(\Sigma_\varepsilon\times X_\varepsilon\times
Y_\varepsilon\times Z_\varepsilon)\ge 1-\varepsilon.
\end{equation}
Since $M_{\mathbb{N}}(Z\times [0,T])$ endowed with the Prohorov's
metric is a separable metric space, by Theorem 3.2 in \cite[pp.
29]{PKR}, then it holds that the distributions of the family
$\{\pi_n, n\in\mathbb{N}\}$ are tight on $M_{\mathbb{N}}(Z\times
[0,T])$. Therefore, it follows from the Corollary in \cite[pp.16,
Corollary1.3]{KHJ} that the distribution of the joint processes
$$\left\{(W_n(\omega,\cdot),\pi_n(\omega,\cdot),\r^n(\omega,\cdot),u^n(\omega,\cdot),\r^nu^n(\omega,\cdot)): n\in\mathbb{N}\right\}$$
are tight on $S$. The tightness property of $\Pi^n$ is thus proved.
\end{proof}
\subsection{Convergence}
\hspace{2mm}

In this subsection, we shall pass to the limit directly to get the
solution. The following proof differs from the approach of D. Parto
and Zabczyk in \cite{PZ-1992}. It is based on the method used by A.
Bensoussan in \cite{BA}. In the above Subsection, we know that $\Pi^n$
is tight on the space $S=C(0,T; \R^d)\times M_{\mathbb{N}}(Z\times
[0,T])\times L^{\infty}(0,T; W^{-1,\infty}(D))\times L^2(0,T;
L^2(D))\times L^2(0,T; W^{-\alpha,2}(D)),\; 0<\alpha<\frac{6}{13}$.
According to Jakubowski-Skorohod's theorem, there exists a
subsequence such that $\Pi^{n_j}\stackrel{w}{\to}\Pi$, where $\Pi$
is a probability on $S$. Moreover, there exist a probability space
$(\mathring{\Omega},\mathring{\mathscr{F}},\mathring{P})$ and random
variables
$(\mathring{W}_{n_j},\mathring{\pi}_{n_j},\mathring{\rho}^{n_j},
\mathring{u}^{n_j},\mathring{\rho}^{n_j}\mathring{u}^{n_j})$;
$(W,\pi,\rho, u,h)$ with values in $S$, such that the probability
distribution of
$(\mathring{W}_{n_j},\mathring{\pi}_{n_j},\mathring{\rho}^{n_j},
\mathring{u}^{n_j},\mathring{\rho}^{n_j}\mathring{u}^{n_j})$ is
$\Pi^{n_j}$ and the probability distribution of $(W,\pi,\rho, u,h)$
is $\Pi$, and
\begin{equation}\label{4.54}
(\mathring{W}_{n_j},\mathring{\pi}_{n_j},\mathring{\rho}^{n_j},\mathring{u}^{n_j},\mathring{\rho}^{n_j}\mathring{u}^{n_j})\to
(W, \pi, \rho, u, h) \ \ \mbox{in}\ \ S, \mathring{P}-a.s.
\end{equation}
Here $W$ is
$\mathring{\mathscr{F}}_t=\sigma\{\rho(s),u(s),W(s),\pi(s)\}_{0\le
s\le t}$ standard Brownian motion. In fact, we need to prove that
for $s\le t$ and $i^2=-1$
\begin{equation}\label{4.55}
\mathring{\mathbb E}\left[\exp\left\{i\lambda(W(t)-W(s)\right\}\right]=\exp\left(-{\lambda^2\over2}(t-s)\right).
\end{equation}
It is sufficient to show that
\begin{equation}\label{4.56}
\mathring{\mathbb E}\left[\exp\left\{i\lambda(W(t)-W(s)\right\}|\mathring{\mathscr{F}}_t\right]=\exp\left(-{\lambda^2\over2}(t-s)\right).
\end{equation}
Here $\mathring{\mathbb E}$ denotes the mathematical expectation with
respect to the probability space
$(\mathring{\Omega},\mathring{\mathscr{F}},\mathring{P})$. Note that
if $X$ is $\mathring{\mathscr{F}}$ measurable and
$\mathring{\mathbb E}(|Y|), \mathring{\mathbb E}(|XY|)<\infty$, then
\begin{equation}\label{4.57}
\mathring{\mathbb E}(XY|\mathring{\mathscr{F}})=X\mathring{\mathbb E}(Y|\mathring{\mathscr{F}}),\;
\mathring{\mathbb E}(\mathring{\mathbb E}(Y|\mathring{\mathscr{F}}))=\mathring{\mathbb E}(Y).
\end{equation}
Thus
\begin{equation}\label{4.58}
\mathring{\mathbb E}(XY)=\mathring{\mathbb E}\left(X\mathring{\mathbb E}(Y|\mathring{\mathscr{F}})\right).
\end{equation}
Using \eqref{4.58}, we can prove \eqref{4.56} if the
following equality is satisfied.
\begin{equation}\label{4.59}
\mathring{\mathbb E}\left[\exp\{i\lambda(W(t)-W(s))\}\Lambda(\rho,u,W,\pi)\right]=\exp\left(-{\lambda^2(t-s)\over2}\right)\mathring{\mathbb E}\left(\Lambda(\rho,u,W,\pi)\right),
\end{equation}
for any continuous bounded functional $\Lambda(\rho,u,W,\pi)$ on $S$
depending only on the values of $\rho, u, W, \pi$ on $(0,s)$. Since
$\mathring{W}_{n_j}(t)-\mathring{W}_{n_j}(s)$ is independent of
$\Lambda(\mathring{\rho}^{n_j},\mathring{u}^{n_j},\mathring{W}_{n_j},\mathring{\pi}_{n_j})$
and $\mathring{W}_{n_j}$ is a Brownian motion, then we have
\begin{equation}\label{4.60}
\begin{split}
\mathring{\mathbb E}&\left[\exp\{i\lambda(\mathring{W}_{n_j}(t)-\mathring{W}_{n_j}(s))\}\Lambda(\mathring{\r}^{n_j},\mathring{u}^{n_j},\mathring{W}_{n_j},\mathring{\pi}_{n_j})\right]\\
&=\mathring{\mathbb E}\left[\exp\{i\lambda(\mathring{W}_{n_j}(t)-\mathring{W}_{n_j}(s))\}\right]\mathring{\mathbb E}(\Lambda(\mathring{\r}^{n_j},\mathring{u}^{n_j},\mathring{W}_{n_j},\mathring{\pi}_{n_j}))\\
&=\exp\left(-{\lambda^2(t-s)\over2}\right)E(\Lambda(\mathring{\r}^{n_j},\mathring{u}^{n_j},\mathring{W}_{n_j},\mathring{\pi}_{n_j})).
\end{split}
\end{equation}
Taking $j\to\infty$, \eqref{4.54} and the continuity of $\Lambda$
imply \eqref{4.59}. Then $W(t)$ is a $\mathring{\mathscr{F}}_t$
standard Brownian motion.

For a random measure $\eta$ on $Z\times [0,T]$ and for any
$A\in\mathscr{Z}$, where $\mathscr{Z}$ is the Borel sets on $Z$,
define the measure valued process $N_\eta(t)$ by
$N_\eta(t)=\left\{A\mapsto
N_\eta(t,A):=\eta(A\times(0,t])\right\},\; t\in[0,T]$. We have the
following proposition:
\begin{Proposition}
$\mathring{\pi}_{n_j}$ and $\pi$ are time homogeneous Poisson random
measures on $\mathscr{B}(Z)\times\mathscr{B}([0,T])$ over
$(\mathring{\Omega},\mathring{\mathscr{F}},\mathring{P})$ with
intensity measure $\mu$.
\end{Proposition}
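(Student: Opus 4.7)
The proof divides into two halves: the claim for $\mathring{\pi}_{n_j}$ at each finite stage, and then the passage to the limit for $\pi$.

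For $\mathring{\pi}_{n_j}$, by the Jakubowski--Skorokhod theorem the law of $\mathring{\pi}_{n_j}$ on $M_{\bar{\mathbb{N}}}(Z\times[0,T])$ coincides with that of $\pi_{n_j}=\pi$, which is by hypothesis a time-homogeneous Poisson random measure on $(\Omega,\mathscr{F},\mathscr{F}_t,P)$ with intensity $\mu$. Properties (1), (2), (3) of Definition~\ref{defA.2} are expressible entirely in terms of the finite-dimensional laws of the evaluations $\{i_B\circ\pi_{n_j}:B\in\mathscr{Z}\otimes\mathscr{B}(\mathbb{R}_+)\}$, and so transfer automatically. For property~(4), the $\mathring{\mathscr{F}}_t$-adaptedness and independence of increments of $\mathring{\pi}_{n_j}$ is obtained by the same kind of identity-through-conditional-expectation argument as in \eqref{4.59}--\eqref{4.60}: for any bounded continuous functional $\Lambda$ on $S$ depending only on values before time $s$, and any bounded measurable $\Psi$ of $\pi((s,t]\times\cdot)$, the identity $\mathring{\mathbb{E}}\Lambda\,\Psi=\mathring{\mathbb{E}}\Lambda\cdot\mathring{\mathbb{E}}\Psi$ holds on the original space and hence on $(\mathring{\Omega},\mathring{\mathscr{F}},\mathring{P})$ by equality of laws.

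The substantive part is to pass to the limit and establish the same properties for $\pi$. My plan is to use the Laplace functional characterization: a random measure $\eta$ on $Z\times[0,T]$ is a time-homogeneous Poisson random measure with intensity $\mu$ if and only if
\begin{equation*}
\mathring{\mathbb{E}}\exp\!\left(-\!\int_0^T\!\!\int_Z\! \varphi(z,s)\,\eta(dz,ds)\right)
= \exp\!\left(-\!\int_0^T\!\!\int_Z (1-e^{-\varphi(z,s)})\,\mu(dz)\,ds\right)
\end{equation*}
for every bounded nonnegative measurable $\varphi$ of compact support. Each $\mathring{\pi}_{n_j}$ satisfies this identity by the first part. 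The a.s.\ convergence $\mathring{\pi}_{n_j}\to\pi$ in $M_{\bar{\mathbb{N}}}(Z\times[0,T])$ yields $\int\varphi\,d\mathring{\pi}_{n_j}\to\int\varphi\,d\pi$ $\mathring{P}$-a.s.\ for continuous compactly supported $\varphi$; since the exponentials are bounded by $1$, dominated convergence transfers the Laplace identity to $\pi$. A monotone class argument together with the uniform bound $\mathring{\mathbb{E}}\mathring{\pi}_{n_j}(B\times I)=\mu(B)\,|I|$ extends the identity to all bounded measurable $\varphi$ with $\int(1\wedge|z|^2)\mu(dz)<\infty$, which gives properties (1)--(3) for $\pi$.

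For property (4), the plan is to mimic \eqref{4.59}--\eqref{4.60} verbatim. Fix $0\le s<t\le T$, $U\in\mathscr{Z}$, and a bounded continuous functional $\Lambda(\rho,u,W,\pi)$ depending only on the restriction to $[0,s]$. On the original space, $\pi_{n_j}(U\times(s,t])$ is independent of $\mathscr{F}_s^{n_j}$ by the Poisson property, so
\begin{equation*}
\mathring{\mathbb{E}}\Bigl[e^{i\lambda\mathring{\pi}_{n_j}(U\times(s,t])}\Lambda(\mathring{\rho}^{n_j},\mathring{u}^{n_j},\mathring{W}_{n_j},\mathring{\pi}_{n_j})\Bigr]
=\mathring{\mathbb{E}}\bigl[e^{i\lambda\mathring{\pi}_{n_j}(U\times(s,t])}\bigr]\,\mathring{\mathbb{E}}\Lambda.
\end{equation*}
Passing $j\to\infty$ using the a.s.\ convergence and bounded convergence yields the analogous factorization for $\pi$ and $\Lambda(\rho,u,W,\pi)$, which by a standard cylindrical/monotone class argument gives that $\pi(U\times(s,t])$ is independent of $\mathring{\mathscr{F}}_s=\sigma\{\rho(r),u(r),W(r),\pi(r):r\le s\}$, hence property~(4).

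The main obstacle I anticipate is technical rather than conceptual: the convergence in $M_{\bar{\mathbb{N}}}(Z\times[0,T])$ is vague (only against continuous compactly supported test functions), whereas Definition~\ref{defA.2} requires Poisson behavior of $\pi$ evaluated on arbitrary measurable sets. The point-mass nature of integer-valued measures combined with the assumption $\int_Z(1\wedge|z|^2)\mu(dz)<\infty$ is crucial here, as it allows approximation of indicator functions by continuous test functions while controlling the tails, so that the Laplace-functional identity can be upgraded from continuous $\varphi$ to the indicators needed for properties (1)--(4).
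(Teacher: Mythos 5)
Your proposal is sound in outline but takes a genuinely different route from the paper at the decisive point, namely how $\pi$ itself acquires the Poisson property. The paper performs no limit passage at all: it invokes the Brze\'{z}niak--Hausenblas refinement of the Jakubowski--Skorokhod representation for Poisson random measures (\cite[Theorem D.1]{BH}), under which the representing measures can be chosen so that $\mathring{\pi}_{n_j}(\omega)=\pi(\omega)$ for every $\omega$ and every $j$; consequently, once (1)--(3) are transferred to $\mathring{\pi}_{n_j}$ by equality of laws (\cite[Proposition A.5, Remark A.6]{BH}) and (4) is checked by an independence-transfer argument, the claim for $\pi$ is immediate. You instead treat $\pi$ as a genuine a.s.\ limit and recover (1)--(3) from convergence of Laplace functionals. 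This is more self-contained, since it does not lean on the pointwise identification $\mathring{\pi}_{n_j}=\pi$, but it obliges you to upgrade vague convergence against continuous compactly supported test functions to evaluations on arbitrary sets of $\mathscr{Z}\otimes\mathscr{B}([0,T])$; you correctly flag this, and it is resolvable by the standard fact that the law of a Radon point process is determined by its Laplace functional on nonnegative $C_c$ functions, though the same issue silently reappears in your property-(4) step, where $\mathring{\pi}_{n_j}(U\times(s,t])\to\pi(U\times(s,t])$ a.s.\ does not follow from convergence in $M_{\bar{\mathbb{N}}}(Z\times[0,T])$ for a general measurable $U$, so the factorization identity must first be established for continuous test functions and then extended by a monotone class argument. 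Two further mismatches with the paper are worth noting: your limit filtration $\sigma\{\rho(r),u(r),W(r),\pi(r):r\le s\}$ omits the approximating processes $\mathring{\rho}^{n_j},\mathring{u}^{n_j}$, which the paper keeps in the filtration so that the Galerkin stochastic integrals on the new space remain adapted; and, conversely, your finite-stage verification of (4) yields independence only from the approximations' past, whereas the paper must (and does, via \cite[Lemma 9.3]{BH}) also obtain independence of the increment from $\rho|_{[0,t_0]}$ and $u|_{[0,t_0]}$, exploiting that independence survives a.s.\ limits.
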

\begin{proof} We shall prove the Proposition by the definition. Since
$\mathring{\pi}_n$ and $\pi_n$ have the same distribution and
$\pi_n$ is a time homogeneous Poisson random measure, from
\cite[Proposition A.5, Remark A.6]{BH}, it holds that
$\mathring{\pi}_n$ satisfies (1)-(3) of Definition \ref{defA.2}.

Therefore, we only need to prove that $\mathring{\pi}_n$ satisfies
(4) of Definition \ref{defA.2} with the filtration
$\mathring{\mathscr F}_t=\sigma\left\{W(s),\pi(s),\r^n(s),
u^n(s),\r, u; 0\le s\le t\right\}, t\in[0,T]$. To this end, fix
$n\in\mathbb{N}, t_0\in[0,T]$ and $t_0\le s\le t$. The definition of
$\mathring{\mathscr F}_t$ implies that $\mathring{\pi}_n$ is
$\mathring{\mathscr F}_t$-adapted. Then it remains to prove that
$\mathring{X}_n=N_{\mathring{\pi}_n}(t)-N_{\mathring{\pi}_n}(s)$ is
independent of $\mathring{\mathscr F}_{t_0}$.

It follows from (2) of Definition \ref{defA.2} that the random
variable$\mathring{X}_n=N_{\mathring{\pi}_n}(t)-N_{\mathring{\pi}_n}(s)$
is independent of $N_{\mathring{\pi}_n}(t_0)$, hence we only need to
prove that $\mathring{X}_n$ is independent of $\mathring{\r}^n(r),
\mathring{u}^n(r)$ and $\r(r), u(r)$ for any $r\le t_0$.

Fix $r\in[0,t_0]$. Since the distributions of
$(\mathring{W}_n,\mathring{\pi}_n,\mathring{\rho}^n,
\mathring{u}^n)$ and $(W_n,\pi_n,\r^n,u^n)$ are same, then
$\mathcal{L}(\mathring{\r}^n|_{[0,r]},\mathring{u}^n|_{[0,r]},\mathring{X}_n)=\mathcal{L}(\r^n|_{[0,r]},u^n|_{[0,r]},X_n)$,
where $X_n=N_{\pi_n}(t)-N_{\pi_n}(s)$ and $\mathcal{L}(h)$ denotes
the distribution of $h$. Note that $\mathring{\pi}_n=\pi$ (see
\cite[Theorem D.1]{BH}), $(\r^n, u^n)$ is the solution of the
stochastic approximation equations, then it is adapted to the
$\sigma$-algebra generated by $\mathring{\pi}_n$. Therefore,
$\r^n|_{[0,r]}, u^n|_{[0,r]}$ are independent of $X_n$. From the in
\cite[Remark A.6]{BH} and
$\mathcal{L}(\mathring{\r}^n|_{[0,r]},\mathring{u}^n|_{[0,r]},\mathring{X}_n)=\mathcal{L}(\r^n|_{[0,r]},u^n|_{[0,r]},X_n)$,
one can deduce that
$\mathring{\r}^n|_{[0,r]},\mathring{u}^n|_{[0,r]}$ are independent
of $\mathring{X}_n$.  By \cite[Lemma 9.3]{BH}, it holds that
$\mathring{X}_n$ is independent of $\r|_{[0,r]}, u|_{[0,r]}$. Since
$\pi(\omega)=\mathring{\pi}_n(\omega)$ for all $\omega\in\Omega$ and
$n\in\mathbb{N}$, then $\pi$ is a time homogeneous Poisson random
measure.
\end{proof}

 Now we need to prove that $(\mathring{W}_{n_j},
\mathring{\pi}_{n_j},\mathring{\rho}^{n_j},\mathring{u}^{n_j})$
satisfies the equation \eqref{3.1}, that is,
\begin{align}\label{4.61}
\notag\mathbb{P}^{n}[\mathring{\r}&^{n_j}\mathring{u}^{n_j}(t)]+\int_0^t\mathbb{P}^{n}\left[\Dv(\mathring{\rho}^{n_j}\mathring{u}^{n_j}\otimes
\mathring{u}^{n_j})-\nu\Delta \mathring{u}^{n_j}\right]ds\\
&=\mathbb{P}^{n}[\mathring{\r}^{n_j}\mathring{u}^{n_j}(0)]+\mathbb{P}^{n}\left[\int_0^t
\mathring{\r}^{n_j}f(s,\mathring{u}^{n_j})ds+\int_0^t
\mathring{\r}^{n_j}g(s,\mathring{u}_{n_j})d\mathring{W}_{n_j}\right]\\
\notag&\quad+\mathbb{P}^n\int_0^t\int_{|z|_{Z}<1}\mathring{\r}^{n_j}
F\left(
\mathring{u}^{n_j}(x,s-),z\right)\tilde{\mathring{\pi}}_{n_j}(ds,dz)\\
\notag&\quad+\mathbb{P}^n\int_0^t\int_{|z|_{Z}\ge1}\mathring{\r}^{n_j}
G\left( \mathring{u}^{n_j}(x,s-),z\right)
\mathring{\pi}_{n_j}(ds,dz),
\end{align}
where $\mathbb{P}^n: L^2(D)\to X_n$ is the projection onto $X_n$. To
this end, we define
\begin{align*}
\xi^n(t)&=\mathbb{P}^{n}[\rho^nu^n(t)-\rho^nu^n(0)]+\int_0^t\mathbb{P}^n\left[\Dv(\rho^nu^n\otimes
u^n)-\nu\Delta u^n\right]ds\\
&\quad -\mathbb{P}^n\left[\int_0^t\rho^nf(s,u^n)ds+\int_0^t
\rho^ng(s,u^n)dW_n\right]\\
&\quad-\mathbb{P}^n\int_0^t\int_{|z|_{Z}<1}\r^n F\left(
u^n(x,s-),z\right)\tilde{\pi}_n(ds,dz)\\
&\quad-\mathbb{P}^n\int_0^t\int_{|z|_{Z}\ge1}\r^n G\left(
u^n(x,s-),z\right) \pi_n(ds,dz),
\end{align*}
and
\begin{equation*}
Z^n=\int_0^T \|\xi^n(t)\|^2_{H^{-1}(D)}dt.
\end{equation*}
Of course
\begin{equation*}
Z^n=0, \; P-a.s.
\end{equation*}

 Let
\begin{align*}
\mathring{\xi}^{n_j}(t)&=\mathbb{P}^{n}[\mathring{\rho}^{n_j}\mathring{u}^{n_j}(t)-\mathring{\rho}^{n_j}\mathring{u}^{n_j}(0)]+\int_0^t\mathbb{P}^{n}\left[\Dv(\mathring{\rho}^{n^j}\mathring{u}^{n^j}\otimes
\mathring{u}^{n_j})-\nu\Delta \mathring{u}^{n_j}\right]ds\\
&\quad-\mathbb{P}^{n}\left[\int_0^t\mathring{\rho}^{n_j}f(s,\mathring{u}^{n_j})ds+\int_0^t
\mathring{\rho}^{n_j}g(s,\mathring{u}^{n_j})d\mathring{W}_{n_j}\right]\\
&\quad-\mathbb{P}^n\int_0^t\int_{|z|_{Z}<1}\mathring{\r}^{n_j}
F\left(
\mathring{u}^{n_j}(x,s-),z\right)\tilde{\mathring{\pi}}_{n_j}(ds,dz)\\
&\quad-\mathbb{P}^n\int_0^t\int_{|z|_{Z}\ge1}\mathring{\r}^{n_j}
G\left(
\mathring{u}^{n_j}(x,s-),z\right)\mathring{\pi}_{n_j}(ds,dz).
\end{align*}
and
\begin{equation*}
Y^{n_j}=\int_0^T\|\mathring{\xi}^{n_j}(t)\|^2_{H^{-1}(D)}dt.
\end{equation*}
We have the following proposition:
\begin{Proposition}
$Y^{n_j}=0 \ \ \mathring{P}-\mbox{a.s.}$, that is,
$(\mathring{W}_{n_j},
\mathring{\pi}_{n_j},\mathring{\rho}^{n_j},\mathring{u}^{n_j})$
satisfies the equation \eqref{3.1}.
\end{Proposition}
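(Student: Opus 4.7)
The strategy is to leverage the fact that $\mathring{\xi}^{n_j}$ and $\xi^{n_j}$ are formed through \emph{identical} measurable operations applied to the tuples $(\mathring{W}_{n_j},\mathring{\pi}_{n_j},\mathring{\rho}^{n_j},\mathring{u}^{n_j})$ and $(W_{n_j},\pi_{n_j},\rho^{n_j},u^{n_j})$, which by Jakubowski--Skorokhod share the same joint law on $S$. Since $\xi^{n_j}\equiv 0$, and hence $Z^{n_j}=0$, $P$-a.s.\ by construction of the Galerkin scheme, it would suffice to show that this vanishing property transfers across equidistributed tuples. The complication is that the map from tuple to $\xi^{n_j}$ involves It\^o integrals against $W_{n_j}$, $\tilde{\pi}_{n_j}$, and $\pi_{n_j}$, which are not continuous functionals of the sample paths, so equality in law of the inputs does not directly yield equality in law of the stochastic integrals.

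To overcome this, I first discretize each stochastic integral in time. Choose a partition $0=s_0<s_1<\cdots<s_K=T$ with mesh $\delta$ and approximate
\[
\int_0^t \rho^{n_j} g(s,u^{n_j})\,dW_{n_j}\approx \sum_{s_k<t}\rho^{n_j}(s_k)g(s_k,u^{n_j}(s_k))\bigl[W_{n_j}(s_{k+1}\wedge t)-W_{n_j}(s_k)\bigr],
\]
and analogously for the integrals against $\tilde{\pi}_{n_j}$ and $\pi_{n_j}$ (representing the compensator explicitly as $\int_{|z|_Z<1}F(\cdot,z)\mu(dz)\,ds$). The resulting expression $\xi^{n_j,\delta}(t)$ is a continuous, in fact polynomial, functional of the paths of $(W_{n_j},N_{\pi_{n_j}},\rho^{n_j},u^{n_j})$, so by equality of joint laws
\[
\mathbb{E}\int_0^T\bigl\|\xi^{n_j,\delta}(t)\bigr\|^2_{H^{-1}(D)}dt=\mathring{\mathbb{E}}\int_0^T\bigl\|\mathring{\xi}^{n_j,\delta}(t)\bigr\|^2_{H^{-1}(D)}dt.
\]
Next I let $\delta\to 0$. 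Using It\^o isometry for the Brownian increment, its counterpart for the compensated Poisson integral, assumptions (A)--(B), and the uniform estimates for $\rho^n,u^n$ and $\rho^n u^n$ derived in Section 3 (which, by equality in law, transfer verbatim to $\mathring{\rho}^{n_j},\mathring{u}^{n_j}$), one obtains $\xi^{n_j,\delta}\to\xi^{n_j}$ in $L^2(\Omega;L^2(0,T;H^{-1}(D)))$ and $\mathring{\xi}^{n_j,\delta}\to\mathring{\xi}^{n_j}$ in the corresponding space on $\mathring{\Omega}$. Since $Z^{n_j}=0$ $P$-a.s., the left-hand side is identically zero, so $\mathring{\mathbb{E}}[Y^{n_j}]=0$, and $Y^{n_j}\ge 0$ forces $Y^{n_j}=0$ $\mathring{P}$-a.s.

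The main obstacle is the large-jump term $\int_0^t\int_{|z|_Z\ge 1}\rho^{n_j}G(u^{n_j}(s-),z)\pi_{n_j}(ds,dz)$, whose integrator is not a square-integrable martingale; its Itô isometry fails and a direct $L^2$ convergence argument is unavailable. Here I would exploit the finite-intensity moment bound $\int_{|z|_Z\ge 1}|z|^p\mu(dz)<\infty$ together with the growth estimate \eqref{1.7}, combined with the $\mathring{\mathscr{F}}_t$-adaptedness of $\mathring{\pi}_{n_j}$ established in the previous proposition, so that the large-jump integral reduces $\mathring{P}$-a.s.\ to a finite sum over the jump times of $\mathring{\pi}_{n_j}$ in $[0,t]\times\{|z|_Z\ge 1\}$. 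Its Riemann-sum approximation then converges in $L^p$ by dominated convergence, using the uniform $L^\infty$-bound on $\mathring{\rho}^{n_j}$ and the $p$-th moment bound on $\mathring{u}^{n_j}$. Once this limit passage is secured on both $\Omega$ and $\mathring{\Omega}$, the above equidistribution-plus-Itô-isometry argument carries through uniformly in the three types of stochastic integrals, the conclusion $Y^{n_j}=0$ $\mathring{P}$-a.s.\ follows, and $(\mathring{W}_{n_j},\mathring{\pi}_{n_j},\mathring{\rho}^{n_j},\mathring{u}^{n_j})$ solves \eqref{3.1} on the new probability space.
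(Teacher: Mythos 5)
Your overall strategy is sound and reaches the right conclusion, but it takes a genuinely different route from the paper. The paper does not discretize the stochastic integrals at all: it invokes the uniqueness-in-law theorem for It\^{o} integrals with respect to L\'{e}vy noise (Theorem 2.4 and Corollary 2.5 of Brze\'{z}niak--Hausenblas \cite{BH-2011}) as a black box to conclude directly that the \emph{joint} law of $(\rho^{n},u^{n},\xi^{n},W_{n},\pi_{n})$ equals that of $(\mathring{\rho}^{n_j},\mathring{u}^{n_j},\mathring{\xi}^{n_j},\mathring{W}_{n_j},\mathring{\pi}_{n_j})$; it then uses the continuity of $Y^{n_j}$ as a function of $\mathring{\xi}^{n_j}$ and the truncated test functions $\phi_\varepsilon$ to transfer $Z^{n_j}=0$ into $\mathring{P}(Y^{n_j}\ge\varepsilon)=0$ for every $\varepsilon>0$. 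Your discretization-plus-limit argument is essentially a self-contained proof of that cited equality of laws, and your final step ($\mathring{\mathbb{E}}[Y^{n_j}]=0$ plus nonnegativity) is a legitimate shortcut replacing the paper's $\phi_\varepsilon$ device, provided you justify integrability of $Y^{n_j}$ from the a priori bounds. What the paper's route buys is brevity at the cost of an external reference; what yours buys is self-containedness, including an explicit treatment of the large-jump integral, which the paper glosses over entirely.

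One point in your write-up needs repair before it is airtight: you assert that the Riemann sums are ``continuous, in fact polynomial'' functionals of the paths, but the marginals of the tuple live in $L^{2}(0,T;L^{2}(D))$ and $L^{\infty}(0,T;W^{-1,\infty}(D))$, on which the point evaluations $u^{n_j}(s_k)$, $\rho^{n_j}(s_k)$ are neither continuous nor even well defined on equivalence classes. For the transfer of laws you only need Borel measurability, and the standard fix is to replace $u^{n_j}(s_k)$ by the averaged evaluations $\delta^{-1}\int_{s_k-\delta}^{s_k}u^{n_j}(r)\,dr$ (which \emph{are} continuous on $L^{2}(0,T;L^{2}(D))$), or to exploit that $u^{n_j}$ is c\`{a}dl\`{a}g with values in the finite-dimensional space $X_n$. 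With that modification, and with the adaptedness of the limiting objects to $\mathring{\mathscr{F}}_t$ (needed for the It\^{o} integrals on $\mathring{\Omega}$ to be defined, which the preceding propositions supply), your argument closes correctly.
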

\begin{proof}The difficulty is that $Z^n$ is not expressed as a
deterministic function of  $(W_n, \pi_n, \r^n, u^n)$  because of the
presence of the stochastic integral. By Theorem 2.4 and Corollary
2.5 in \cite{BH-2011}, we can infer that
\begin{equation}\label{4.62a}
\mathcal{L}(\rho^n, u^n, \xi_n, W_n,
\pi_n)=\mathcal{L}(\mathring{\r}^{n_j}, \mathring{u}^{n_j},
\mathring{\xi_{n_j}}, \mathring{W}_{n_j}, \mathring{\pi}_{n_j}).
\end{equation}
 Note that $Y^{n_j}$ is continuous as a
function of $\mathring{\xi_{n_j}}$ if $\mathring{u}^{n_j}$ belongs
to a finite dimensional subspace of $H^1(D)$. In view of
\eqref{4.62a} and the continuity of $Y^{n_j}$, one deduces that the
distribution of $Y^{n_j}$ is equal to the distribution of $Z^{n_j}$
on $\mathbb{R}_+$, that is,
\begin{equation}\label{4.62b}
\mathring{\mathbb E}\phi(Y^{n_j})= \mathbb E\phi(Z^{n_j}),
\end{equation}
for any $\phi\in C_b(\mathbb{R}_+)$, where $C_b(X)$ is the space of
continuous bounded functions defined on $X$. Now, let
$\varepsilon>0$ be an arbitrary number and $\phi_\varepsilon\in
C_b(\mathbb{R}_+)$ defined by
\begin{equation*}
\phi_\varepsilon=
\begin{cases}
{y\over\varepsilon},\; 0\le y<\varepsilon;\\
1,\; y\ge \varepsilon.
\end{cases}
\end{equation*}
One can check that
\begin{equation*}
\begin{split}
\mathring{P}(Y^{n_j}\ge \varepsilon)&=\int_{\mathring{\Omega}}
1_{[\varepsilon,\infty]}Y^{n_j}d\mathring{P}\le
\int_{\mathring{\Omega}}
1_{[0,\varepsilon]}{Y^{n_j}\over\varepsilon}d\mathring{P}+\int_{\tilde{\Omega}}
1_{[\varepsilon,\infty]}Y^{n_j}d\mathring{P}.
\end{split}
\end{equation*}
Hence by the definition of $\mathring{\mathbb E}(Y^{n_j})$, we can infer
that
\begin{equation*}
\mathring{P}(Y^{n_j}\ge \varepsilon)\le
\mathring{\mathbb E}\phi_\varepsilon(Y^{n_j}),
\end{equation*}
which, together with \eqref{4.62b} implies that
\begin{equation*}
\mathring{P}(Y^{n_j}\ge \varepsilon)\le \mathbb E\phi_\varepsilon(Z^{n_j}).
\end{equation*}
By the fact that $(\rho^n, u^n, W_n, \pi_n)$ satisfies the Galerkin
equation, from the above inequality, it holds that
\begin{equation}\label{4.62c}
\mathring{P}(Y^{n_j}\ge \varepsilon)\le
\mathbb E\phi_\varepsilon(Z^{n_j})=0,
\end{equation}
for any $\varepsilon>0$. Since $\varepsilon>0$ is arbitrary, from
\eqref{4.62c}, we can infer that
\begin{equation}\label{4.62}
Y^{n_j}=0 \ \ \mathring{P}-\mbox{a.s.}.
\end{equation}
It follows from \eqref{4.62} that $(\mathring{W}_{n_j},
\mathring{\pi}_{n_j},\mathring{\rho}^{n_j},\mathring{u}^{n_j})$
satisfies the equation \eqref{3.1}.
\end{proof}

Now, we want to pass to the limit directly. To this end, we need the
following proposition and lemma (see \cite[Chapter 3]{KO}).
\begin{Proposition}[Uniformly integrable]\label{Propo2}
If there exists a nonnegative  measurable function $f$ in
$\mathbb{R}^+$, such that $\lim\limits_{x\to\infty} {f(x)\over
x}=\infty$ and $\sup_{t\in T}\mathbb E[f(|X_t|)]<\infty$. Then $X_t$ is a
set of uniformly integrable.
\end{Proposition}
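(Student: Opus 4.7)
The plan is to apply the de la Vallée-Poussin criterion directly: the hypothesis that $f(x)/x\to\infty$ together with the uniform bound on $\mathbb{E}[f(|X_t|)]$ is exactly the classical sufficient condition for the family $\{X_t\}_{t\in T}$ to be uniformly integrable. So the proof reduces to recovering the standard truncation estimate.

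First, I would fix an arbitrary $\varepsilon>0$ and set $C:=\sup_{t\in T}\mathbb{E}[f(|X_t|)]<\infty$, which is finite by hypothesis. Using the assumption $\lim_{x\to\infty}f(x)/x=\infty$, for any prescribed level $M>0$ there exists $K=K(M)>0$ such that $f(x)\ge Mx$ whenever $x\ge K$; equivalently, $x\le f(x)/M$ on the set $\{x\ge K\}$. Choosing $M:=C/\varepsilon$ pins down a single threshold $K_\varepsilon$ that will do the job uniformly in $t$.

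Next, I would carry out the routine truncation estimate: for every $t\in T$,
\begin{equation*}
\mathbb{E}\bigl[|X_t|\,\mathbf{1}_{\{|X_t|\ge K_\varepsilon\}}\bigr]
\le \frac{1}{M}\,\mathbb{E}\bigl[f(|X_t|)\,\mathbf{1}_{\{|X_t|\ge K_\varepsilon\}}\bigr]
\le \frac{1}{M}\,\mathbb{E}[f(|X_t|)]\le \frac{C}{M}=\varepsilon,
\end{equation*}
where in the second inequality I have used that $f\ge 0$. Taking the supremum over $t\in T$ gives $\sup_{t\in T}\mathbb{E}[|X_t|\mathbf{1}_{\{|X_t|\ge K_\varepsilon\}}]\le \varepsilon$, which is exactly the definition of uniform integrability of $\{X_t\}_{t\in T}$.

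There is essentially no hard step here; the only point that requires a little care is making sure the threshold $K_\varepsilon$ can be chosen independently of $t$, which is guaranteed by the fact that the growth condition $f(x)/x\to\infty$ is a property of $f$ alone and the bound $C$ is uniform in $t$. (If one wanted to be even more concise, one could instead note that $\sup_t \mathbb{E}[f(|X_t|)]<\infty$ together with the superlinear growth of $f$ implies, via the de la Vallée-Poussin theorem in its standard form, that the family is uniformly integrable, and merely cite that theorem.)
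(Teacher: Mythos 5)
Your proof is correct: it is the standard truncation argument for the de la Vall\'ee-Poussin sufficiency criterion, and the threshold $K_\varepsilon$ is indeed chosen uniformly in $t$ since it depends only on $f$ and on $C=\sup_{t}\mathbb{E}[f(|X_t|)]$. The paper itself gives no proof of this proposition --- it simply cites Kallenberg's \emph{Foundations of Modern Probability} --- so your argument supplies exactly the classical reasoning that the citation stands in for.
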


\begin{Lemma}[Vitali's convergence Theorem]\label{Lemma2}
Suppose $p\in[1,\infty)$, $X_n\in L^p$ and $X_n$ converges to $X$ in
probability. Then the following are equivalent:

(1) $X_n\stackrel{L^p}{\to}X$;

(2) $|X_n|^p$ is uniformly integrable;

(3) $\mathbb E(|X_n|^p)\to \mathbb E(|X|^p)$.
\end{Lemma}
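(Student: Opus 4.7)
My plan is to establish the three equivalences cyclically as $(2) \Rightarrow (1) \Rightarrow (3) \Rightarrow (2)$, since the chain naturally reflects the interplay of uniform integrability, convergence in probability, and truncation that underlies Vitali's theorem.

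First I would prove $(2) \Rightarrow (1)$, the classical Vitali criterion, which is the core of the lemma. By convergence in probability, extract a subsequence along which $X_{n_k} \to X$ almost surely; Fatou's lemma combined with the $L^1$-boundedness inherent in uniform integrability yields $X \in L^p$. Fix $\varepsilon > 0$. Uniform integrability of $\{|X_n|^p\}$ together with the (single-function) uniform integrability of $|X|^p$ supplies $\delta > 0$ such that $P(A) < \delta$ forces both $\mathbb{E}[|X_n|^p \mathbf{1}_A] < \varepsilon$ and $\mathbb{E}[|X|^p \mathbf{1}_A] < \varepsilon$ uniformly in $n$. For $\eta > 0$ and $A_n := \{|X_n - X| \geq \eta\}$, splitting the domain gives
\[
\mathbb{E}|X_n - X|^p \leq \eta^p + 2^p\bigl(\mathbb{E}[|X_n|^p \mathbf{1}_{A_n}] + \mathbb{E}[|X|^p \mathbf{1}_{A_n}]\bigr).
\]
Since convergence in probability yields $P(A_n) \to 0$, for $n$ large the bracketed terms are each $< \varepsilon$; letting $\eta, \varepsilon \to 0$ delivers $(1)$.

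The implication $(1) \Rightarrow (3)$ is a one-line consequence of the reverse triangle inequality $\bigl|\|X_n\|_{L^p} - \|X\|_{L^p}\bigr| \leq \|X_n - X\|_{L^p} \to 0$ followed by raising to the $p$-th power.

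The main obstacle will be $(3) \Rightarrow (2)$, which requires a truncation argument. Convergence of $X_n$ to $X$ in probability forces $\min(|X_n|^p, M) \to \min(|X|^p, M)$ in probability, and the boundedness of the truncation together with bounded convergence gives $\mathbb{E}[\min(|X_n|^p, M)] \to \mathbb{E}[\min(|X|^p, M)]$ for each fixed $M > 0$. Subtracting this from hypothesis $(3)$ yields
\[
\mathbb{E}[(|X_n|^p - M)^+] \longrightarrow \mathbb{E}[(|X|^p - M)^+]
\]
as $n \to \infty$. Monotone convergence makes the right-hand side vanish as $M \to \infty$, so given $\varepsilon > 0$ one chooses $M$ large enough that the limit is $< \varepsilon$, then enlarges $M$ further to absorb the finitely many initial indices; this produces $\sup_n \mathbb{E}[(|X_n|^p - M)^+] \to 0$ as $M \to \infty$. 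The elementary pointwise bound $|X_n|^p \mathbf{1}_{\{|X_n|^p > 2M\}} \leq 2(|X_n|^p - M)^+$ then converts this into the tail estimate $\sup_n \mathbb{E}\bigl[|X_n|^p \mathbf{1}_{\{|X_n|^p > 2M\}}\bigr] \to 0$, which is exactly uniform integrability of $\{|X_n|^p\}$, closing the cycle.
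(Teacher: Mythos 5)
Your proof is correct, and each of the three implications is carried out by the standard argument: the splitting over $A_n=\{|X_n-X|\ge\eta\}$ combined with the small-set form of uniform integrability for $(2)\Rightarrow(1)$, the reverse triangle inequality for $(1)\Rightarrow(3)$, and the truncation $\min(|X_n|^p,M)$ together with the pointwise bound $|X_n|^p\mathbf{1}_{\{|X_n|^p>2M\}}\le 2\left(|X_n|^p-M\right)^+$ for $(3)\Rightarrow(2)$. There is, however, nothing in the paper to compare against: the authors state this lemma without proof, citing Kallenberg's \emph{Foundations of Modern Probability} \cite[Chapter 3]{KO}, so your self-contained cyclic argument supplies more than the paper itself does. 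One small point worth making explicit: in $(3)\Rightarrow(2)$ you subtract $\mathbb{E}\left[\min(|X|^p,M)\right]$ from $\mathbb{E}|X|^p$, which requires $\mathbb{E}|X|^p<\infty$; this does not follow from the stated hypotheses alone (take $X_n=X\notin L^p$ for all $n$, for which $(3)$ holds vacuously while $(2)$ fails), so $X\in L^p$ should be read as part of the hypotheses, as it is in Kallenberg's formulation.
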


Let us denote the subsequence
$(\mathring{\rho}^{n_j},\mathring{u}^{n_j}),j\ge1$ by
$(\mathring{\rho}^n,\mathring{u}^n)$. Since
$(\mathring{\rho}^n,\mathring{u}^n)$ and $(\rho^n,u^n)$ has the same
distribution, thus by \eqref{4.29} and \eqref{4.30}, we have
\begin{equation}\label{4.68}
\sup_{n}\mathring{\mathbb E}\left(\sup_{0\le s \le
T}\norm{\sqrt{\mathring{\rho}^n}\mathring{u}^n}^{2p}_{L^2(D)}\right)\le
C,
\end{equation}
\begin{equation}\label{4.69}
\sup_{n}\mathring{\mathbb E}\left(\int_0^T\norm{\nabla
\mathring{u}^n(s)}^2_{L^2(D)}ds\right)\le C.
\end{equation}


\begin{proof}[Proof of Theorem \ref{Theorem1.1}]
In order to prove Theorem \ref{Theorem1.1}, we will break the limits
into deterministic and stochastic parts. First, we pass the limits
of the deterministic parts and finally pass the limits of the
stochastic parts.

\noindent{\bf Taking the limits of deterministic parts:} Note that
$\mathring{\r}^n\in L^4(\mathring{\Omega}; L^\infty(0,T;
L^\infty(D)))$, then
\begin{equation}\label{4.76}
\mathring{\r}^n\to \r \ \ \mbox{weakly star in} \ \
L^2(\mathring{\Omega}; L^\infty(0,T; L^\infty(D))),
\end{equation}
and
\begin{equation}\label{4.77}
\mathring{\mathbb E}\sup_{t\in[0,T]}\norm{\mathring{\r}^n}^4_{W^{-1,\infty}(D)}\le
C.
\end{equation}
This together with Proposition \ref{Propo2}, \eqref{4.54} and
Vitali's convergence Theorem implies
\begin{equation}\label{4.78}
\mathring{\r}^n\to \r \ \ \mbox{strongly in} \ \
L^2(\mathring{\Omega}; L^\infty(0,T; W^{-1,\infty}(D))).
\end{equation}
By \eqref{4.69} we can infer that the sequence $\mathring{u}^n$
contains a subsequence, still denoted by $\mathring{u}^n$, that satisfies
\begin{equation}\label{4.70}
\mathring{u}^n\to u \ \ \mbox{weakly in}\ \ L^2(\mathring{\Omega};
L^2(0,T; H^1(D))).
\end{equation}
Similar, by the fact $\mathring{\r}^n(\omega)\in L^\infty(0,T;
L^\infty(D))$, in view of \eqref{4.68}, it holds that
\begin{equation}\label{4.71}
\mathring{u}^n\to u \ \ \mbox{weakly star in}\ \
L^4(\mathring{\Omega}; L^\infty(0,T; L^2(D))).
\end{equation}
Let us consider the positive nondecreasing function $f(x)=x^4$ in
Proposition \ref{Propo2}. The function $f(x)$ obviously satisfies
$\lim\limits_{x\to\infty} {f(x)\over x}=\infty$. Thanks to the
estimate
$\mathring{\mathbb E}\sup_{t\in[0,T]}\norm{\mathring{u}^n}^4_{L^2(D)}\le C$,
we have that $\sup_{n\ge
1}\mathring{\mathbb E}\sup_{t\in[0,T]}f(\norm{\mathring{u}^n}_{L^2(D)})\le
C$. By Proposition \ref{Propo2}, we see that the family
$\left\{\norm{\mathring{u}^n}_{L^2(D)}: n\in\mathbb{N}\right\}$ is
uniformly integrable with respect to the probability measure. From
Vitali's convergence Theorem and \eqref{4.54}, one deduces that
\begin{equation}\label{4.72}
\mathring{u}^n\to u \ \ \mbox{strongly in} \ \
L^2(\mathring{\Omega}; L^2(0,T; L^2(D))).
\end{equation}
Next, from \eqref{4.68}, it holds that
\begin{equation}\label{4.74}
\mathring{\r}^n\mathring{u}^n\to h \ \ \mbox{weakly star in} \ \
L^2(\mathring{\Omega}; L^\infty(0,T; L^2(D))).
\end{equation}
By \eqref{4.68}, we have
$\mathring{\mathbb E}\sup_{t\in[0,T]}\norm{\mathring{\r}^n\mathring{u}^n}^4_{L^2(D)}\le
C$. This yields that
$\mathring{\mathbb E}\sup_{t\in[0,T]}\norm{\mathring{\r}^n\mathring{u}^n}^4_{W^{-\alpha,2}(D)}\le
C$. It follows from \eqref{4.54} and Vitali's convergence Theorem
that
\begin{equation}\label{4.75}
\mathring{\r}^n\mathring{u}^n\to h \ \ \mbox{strongly in} \ \
L^2(\mathring{\Omega}; L^2(0,T; W^{-\alpha,2}(D))).
\end{equation}

From the fact that $\norm{fg}_{W^{-1,6}(D)}\le C
\norm{f}_{W^{-1,\infty}(D)}\norm{g}_{H^1(D)}$ in Lemma
\ref{Lemma2.1},  \eqref{4.78} and \eqref{4.70}, then one has
\begin{equation}\label{4.80}
\mathring{\r}^n\mathring{u}^n\to\r u \ \ \mbox{weakly in} \ \
L^2(\mathring{\Omega}; L^2(0,T; W^{-1,6}(D))).
\end{equation}
This together with \eqref{4.75} implies that
\begin{equation}\label{4.81}
h=\r u.
\end{equation}
It follows form \eqref{4.38} that
\begin{equation}\label{4.79}
\mathring{\r}^n\mathring{u}^n\mathring{u}^n\to \bar{h} \ \
\mbox{weakly in}\ \ L^2(\mathring{\Omega}; L^{4/3}(0,T; L^2(D))).
\end{equation}
Similarly, by the fact that $\norm{fg}_{W^{-1,3/2}(D)}\le
C\norm{f}_{W^{-1,2}(D)}\norm{g}_{H^1(D)}$ in Lemma \ref{Lemma2.1},
and \eqref{4.70}, \eqref{4.75} and \eqref{4.81}, we can
infer that
\begin{equation}\label{4.82}
\mathring{\r}^n\mathring{u}^n\mathring{u}^n\to\r uu \ \ \mbox{weakly
in} \ \ L^2(\mathring{\Omega}; L^1(0,T; W^{-1,3/2}(D))).
\end{equation}
By \eqref{4.72}, the continuity of $f$ and $g$, Vitali's convergence
Theorem imply that
\begin{equation}\label{4.83}
f(t,\mathring{u}^n)\to f(t,u) \ \ \mbox{strongly in} \ \
L^2(\mathring{\Omega}; L^2(0,T; L^2(D))),
\end{equation}
and
\begin{equation}\label{4.84}
g(t,\mathring{u}^n)\to g(t,u) \ \ \mbox{strongly in} \ \
L^2(\mathring{\Omega}; L^2(0,T; L^2(D))).
\end{equation}

\noindent{\bf Taking the limits of stochastic parts:} First, we show
that
\begin{equation}\label{4.85}
\int_0^t
\mathring{\r}^ng(s,\mathring{u}^n(s))d\mathring{W}_n(s)\rightharpoonup
\int_0^t\rho g(s,u(s))dW(s) \ \ \mbox{in} \ \ L^2(\mathring{\Omega};
L^2(0,T; L^2(D))).
\end{equation}
To deal with the stochastic integral, we introduce the function:
\begin{equation}\label{4.86}
\tilde{G}_{\varepsilon}(t)={1\over\varepsilon}\int_0^t
J\left({t-s\over\varepsilon}\right)\tilde{G}(s)ds,
\end{equation}
where $J$ is the standard mollifier.
Note that
\begin{equation}\label{4.87}
\mathring{\mathbb E}\int_0^T\norm{\tilde{G}_{\varepsilon}(t)}_{L^2(D)}^2dt\le
\mathring{\mathbb E}\int_0^T\norm{\tilde{G}(t)}_{L^2(D)}^2dt,
\end{equation}
and
\begin{equation}\label{4.88}
\tilde{G}_\varepsilon(t)\to \tilde{G}(t) \ \ \mbox{in} \ \ L^2(0,T;
L^2(D)).
\end{equation}
Since $\int_0^t \tilde{G}^n(s)d\mathring{W}_n(s)\in
L^2(\mathring{\Omega}; L^2(D))$, then for $\forall \phi\in
L^2(\mathring{\Omega}; L^2(D))$, there exists a $\xi$ such that
\begin{equation}\label{4.94}
\mathring{\mathbb E}\left\langle\phi,\int_0^t
\tilde{G}^n(s)d\mathring{W}_n(s)\right\rangle\to
\mathring{\mathbb E}\left\langle\phi,\xi\right\rangle.
\end{equation}
Next, we will show that $\xi=\int_0^t \tilde{G}(s)dW(s)$.
Integrating by parts, we obtain
\begin{equation}\label{4.89}
\int_0^t
\tilde{G}^n_\varepsilon(s)d\mathring{W}_n(s)=\tilde{G}^n_\varepsilon(t)\mathring{W}_n(t)-\int_0^t
[\tilde{G}^n_\varepsilon(s)]_s\mathring{W}_n(s)ds.
\end{equation}
Letting $n\to\infty$ in \eqref{4.89}, in virtue of \eqref{4.54} and
\eqref{4.84}, we can obtain
\begin{equation}\label{4.90}
\int_0^t \tilde{G}^n_\varepsilon(s)d\mathring{W}_n(s)\rightharpoonup
\tilde{G}_\varepsilon(t)W(t)-\int_0^t[\tilde{G}_\varepsilon(s)]_sW(s)ds=\int_0^t\tilde{G}_\varepsilon(s)dW(s).
\end{equation}
Since
\begin{equation}\label{4.91}
\begin{split}
\mathring{\mathbb E}\norm{\int_0^t
\tilde{G}^n_\varepsilon(s)d\mathring{W}_n(s)}_{L^2(D)}^2&=\mathring{\mathbb E}\left(\int_0^t
\|\tilde{G}^n_\varepsilon(s)\|^2_{L^2(D)}ds\right)\\
&\le
\mathring{\mathbb E}\left(\int_0^t\|\mathring{\rho}^ng(s,\mathring{u}^n(s))\|^2_{L^2(D)}ds\right)\le
C,
\end{split}
\end{equation}
then it follows from Remark \ref{Remark2.1} that
\begin{equation}\label{4.92}
\int_0^t \tilde{G}^n_\varepsilon(s)d\mathring{W}_n(s)\rightharpoonup
\int_0^t\tilde{G}_\varepsilon(s)dW(s) \ \ \mbox{in}\ \
L^2(\mathring{\Omega}; L^2(D)).
\end{equation}
That is, $\forall \phi\in L^2(\mathring{\Omega}; L^2(D))$, we have
\begin{equation}\label{4.93}
\mathring{\mathbb E}\left\langle\phi,\int_0^t
\tilde{G}^n_\varepsilon(s)d\mathring{W}_n(s)\right\rangle\to
\mathring{\mathbb E}\left\langle\phi,\int_0^t\tilde{G}_\varepsilon(s)dW(s)\right\rangle.
\end{equation}
 Note that
\begin{align}\label{4.95}
\notag\mathring{\mathbb E}&\left\langle\phi,\int_0^t
\tilde{G}^n(s)d\mathring{W}_n(s)\right\rangle-E\left\langle\phi,\int_0^t
\tilde{G}(s)dW(s)\right\rangle\\
\notag&=\mathring{\mathbb E}\left\langle\phi,\int_0^t[\tilde{G}^n(s)-\tilde{G}^n_\varepsilon(s)]d\mathring{W}_n(s)\right\rangle\\
&\quad+
\mathring{\mathbb E}\left\langle\phi,\int_0^t\tilde{G}^n_\varepsilon(s)d\mathring{W}_n(s)-\int_0^t
\tilde{G}_\varepsilon(s)dW(s)\right\rangle\\
\notag&\quad+\mathring{\mathbb E}\left\langle\phi,\int_0^t
[\tilde{G}_\varepsilon(s)-\tilde{G}(s)]dW(s)\right\rangle\\
\notag&:=H_1+H_2+H_3.
\end{align}
For the first term $H_1$, the Cauchy-Schwarz inequality,
\eqref{4.84} and \eqref{4.87} yield that
\begin{equation}\label{4.96}
\begin{split}
\mathring{\mathbb E}&\left\langle\phi,\int_0^t[\tilde{G}^n(s)-\tilde{G}^n_\varepsilon(s)]d\mathring{W}_n(s)\right\rangle\\
&\le
\left(\mathring{\mathbb E}\|\phi\|^2_{L^2(D)}\right)^{1\over2}\left(\mathring{\mathbb E}\norm{\int_0^t[\tilde{G}^n(s)-\tilde{G}^n_\varepsilon(s)]d\mathring{W}_n(s)}_{L^2(D)}^2\right)^{1\over2}\\
&\le
\left(\mathring{\mathbb E}\|\phi\|^2_{L^2(D)}\right)^{1\over2}\mathring{\mathbb E}\left(\int_0^t\norm{\tilde{G}^n(s)-\tilde{G}^n_\varepsilon(s)}^2_{L^2(D)}ds\right)^{1\over2}
\to 0,
\end{split}
\end{equation}
as $n\to\infty$ and $\varepsilon\to 0$. Similar as $H_1$, for the
term $H_3$, it follows from the Cauchy-Schwarz inequality and
\eqref{4.88} that $\mathring{\mathbb E}\left\langle\phi,\int_0^t
[\tilde{G}_\varepsilon(s)-\tilde{G}(s)]dW(s)\right\rangle\to 0$ as
$n\to\infty$ and $\varepsilon\to 0$. This together with
\eqref{4.93}, \eqref{4.95} and \eqref{4.96} implies that
$\xi=\int_0^t \tilde{G}(s)dW(s)$. The proof of \eqref{4.85} is thus
complete.

Next, we show that
\begin{equation}\label{4.97}
\int_0^t\int_{|z|_{Z}<1}\mathbb{P}^n\left[\mathring{\r}^n F\left(
\mathring{u}^n(x,s-),z\right)\right]\tilde{\mathring{\pi}}_n(ds,dz)\rightharpoonup\int_0^t\int_{|z|_{Z}<1}\r
F\left(u(x,s-),z\right)\tilde{\pi}(ds,dz)
\end{equation}
in $M^2(\mathring{\Omega},[0,T],L^2(D))$, which is the space of all
$\mathring{\mathscr{F}}_t$-martingales $M_t$ such that
$\mathring{\mathbb E}\int_0^T\norm{M_t}^2_{L^2(D)}dt<\infty$. From
\eqref{4.72} and the continuity of $F\left(
\mathring{u}^n(x,s-),z\right)$, we can infer that
$\mathbb{P}_n\left[F\left( \mathring{u}^n(x,s-),z\right)\right]$
converges to $F\left(u(x,s-),z\right)$ in $L^2(Z,\mu; L^2(D))$
almost everywhere
$(\mathring{\omega},s)\in\mathring{\Omega}\times[0,T]$. Thanks to
the convergence \eqref{4.76}, it holds that
\begin{equation}\label{4.99}
\mathbb{P}_n\left[\mathring{\r}^nF\left(
\mathring{u}^n(x,s-),z\right)\right]\rightharpoonup \r
F\left(u(x,s-),z\right) \ \ \mbox{in}\ \
L^2(\mathring{\Omega}\times[0,T]; L^2(Z,\mu; L^2(D))).
\end{equation}
On the other hand, for any $\psi\in L^2(\mathring{\Omega}\times[0,T];
L^2(Z,\mu; L^2(D)))$, one has
\begin{align}\label{4.100}
\notag\int_0^t&\int_{|z|_{Z}<1} \left\langle
\mathbb{P}_n\left[\mathring{\r}^nF\left(
\mathring{u}^n(x,s-),z\right)\right],\psi\right\rangle\tilde{\mathring{\pi}}_n(ds,dz)-\int_0^t\int_{|z|_{Z}<1}
\left\langle\r
F\left(u(x,s-),z\right),\psi\right\rangle\tilde{\pi}(ds,dz)\\
&=\int_0^t\int_{|z|_{Z}<1}\left\langle
\mathbb{P}_n\left[\mathring{\r}^nF\left(
\mathring{u}^n(x,s-),z\right)\right],\psi\right\rangle(\tilde{\mathring{\pi}}_n-\tilde{\pi})(ds,dz)\\
\notag&\quad+\int_0^t\int_{|z|_{Z}<1}\left\{\left\langle
\mathbb{P}_n\left[\mathring{\r}^nF\left(
\mathring{u}^n(x,s-),z\right)\right]-\r
F\left(u(x,s-),z\right),\psi\right\rangle\right\}\tilde{\pi}(ds,dz).
\end{align}
Note that all the integrals in \eqref{4.100} are well-defined thanks to the
discussion above. Since $\mathring{\pi}_n=\pi$ for any $n$ almost
surely, by \cite[Proposition B.1]{BH}, we can infer
that the first term on the right-hand side of \eqref{4.100} goes to
0 as $n\to\infty$. It follows from the continuity of the stochastic
integral (as linear functional from $M^2([0,T],L^2(Z,\mu; L^2(D)))$
into $M^2(\mathring{\Omega}\times[0,T]; L^2(D))$) and \eqref{4.99}
that the second term on the right-hand side of \eqref{4.100} also
converges to zero as $n\to\infty$. Similarly, thanks to
$\int_{|z|_Z\ge1} |z|^p \mu(dz)<\infty,\forall p\ge 1$, one deduces
that  $\int_0^t\int_{|z|_{Z}\ge1}\mathbb{P}^n\left[\mathring{\r}^n
G\left(\mathring{u}^n(x,s-),z\right)\right]\mathring{\pi}_n(ds,dz)\rightharpoonup\int_0^t\int_{|z|_{Z}\ge1}\r
G\left(u(x,s-),z\right)\pi(ds,dz)$. The proof of Theorem
\ref{Theorem1.1} is thus complete.
\end{proof}

\bigskip

\section*{Acknowledgments}

The work of R. M. Chen was partially supported by the Simons Foundation under Grant 354996 and the NSF grant
DMS-1613375. 
The research of D. Wang was supported in part by the National Science Foundation under grants DMS-1312800 and DMS-1613213.
 H. Wang's research was supported in part by the National Natural Science
Foundation of China-NSAF (No. 11271305).

\bigskip

\end{document}